\newtheorem{theorem}{Theorem}[section]
\newtheorem{fact}[theorem]{Fact}
\newtheorem{lemma}[theorem]{Lemma}
\newtheorem{question}[theorem]{Question}
\newtheorem{claim}[theorem]{Claim}
\newtheorem{thm}[theorem]{Theorem}
\newtheorem{prop}[theorem]{Proposition}
\newtheorem{conj}[theorem]{Conjecture}
\newtheorem{constru}[theorem]{Construction}
\newtheorem{observation}[theorem]{Observation}
\newcommand*{\mybox}[2]{\colorbox{#1!30}{\parbox{.98\linewidth}{#2}}}
\numberwithin{equation}{section}
\def\A{\mathcal{A}}
\def\B{\mathcal{B}}
\def\P{\mathcal{P}}
\def\eps{\varepsilon}
\def\F{\mathcal{F}}
\def\T{\mathcal{T}}
\def\mis{\text{mis}}
\def\COMMENT#1{}
\title{Applications of graph containers in the Boolean lattice} 
\author{J\'ozsef Balogh,\footnote{Department of Mathematical Sciences,
 University of Illinois at Urbana-Champaign, Urbana, Illinois 61801, USA {\tt
jobal@math.uiuc.edu}. Research is partially supported by Simons Fellowship, NSA Grant H98230-15-1-0002, NSF
 Grant DMS-1500121, Arnold O. Beckman Research Award (UIUC Campus Research Board 15006).}
\   Andrew Treglown\footnote{University of Birmingham, United Kingdom, {\tt a.c.treglown@bham.ac.uk}. Research support by EPSRC grant EP/M016641/1.}
\ and Adam Zsolt Wagner\footnote{University of Illinois at Urbana-Champaign, Urbana, Illinois 61801, USA {\tt
zawagne2@illinois.edu}. }}
\begin{document}
 \maketitle
\begin{abstract}
We apply the graph container method to prove a number of counting results for the Boolean lattice $\mathcal P(n)$. In particular, we: 
\begin{itemize}
\item[(i)] Give a partial answer to a question of Sapozhenko estimating the number of $t$ error correcting codes  in $\mathcal P(n)$, and we also give an upper bound on the number of transportation codes;
\item[(ii)] Provide an alternative proof of Kleitman's theorem on the number of antichains in $\mathcal P(n)$ and  give a two-coloured analogue;
\item[(iii)] Give an asymptotic formula for the number of $(p,q)$-tilted Sperner families in $\mathcal P(n)$;
\item[(iv)] Prove a random version of Katona's $t$-intersection theorem.
\end{itemize}
In each case, to apply the container method, we first prove corresponding supersaturation results.
We also give a construction which disproves two conjectures of Ilinca and Kahn on maximal independent sets and antichains in the Boolean lattice.
 A number of open questions are also given.
\end{abstract}
\COMMENT{AT: all logs base 2? - AZW: think so.}
\section{Introduction}
Many problems in combinatorics and other areas can be rephrased into questions about independent sets in (hyper)graphs.
For example, Sperner's theorem~\cite{sperner} states that the largest antichain in the power set of $[n]$, $\mathcal P(n)$ has size $\binom{n}{\lfloor  n/2\rfloor}$. ($\mathcal P(n)$ is also refered to as the \emph{Boolean lattice}.)
Let $G$ be the graph with vertex set $\mathcal P(n)$ and where $A$ and $B$ are adjacent if $A\subset B$ or $B \subset A$. Then equivalently, Sperner's theorem states that the largest independent set in $G$ has size $\binom{n}{\lfloor  n/2\rfloor}$.

So-called \emph{container results} have  emerged as powerful tools for attacking problems which reduce to counting independent sets in (hyper)graphs. Roughly speaking, container results typically state that the independent sets of a given (hyper)graph $H$ lie only in a `small' number of subsets of the vertex set of $H$ (referred to as \emph{containers}), where each of these containers is an `almost independent set'. 
The \emph{graph} container method dates back to work of Kleitman and Winston~\cite{kw1, kw2} from more than 30 years ago. Indeed, they constructed a relatively simple algorithm that can be used to produce graph container results. 
This algorithm will be the starting point for proving the container results of this paper; we give a more detailed overview of the method in Section~\ref{sec:over}. 
An excellent recent survey of Samotij~\cite{samsurvey} gives several applications of this method to a range of problems in combinatorics and number theory.

The container method has also been  recently generalised to hypergraphs of higher uniformity. Perhaps the first applications of the hypergraph container method appeared in~\cite{bs2}.
Balogh, Morris and Samotij~\cite{container1} and independently Saxton and Thomason~\cite{container2} developed  general container theorems for hypergraphs whose edge distribution satisfies  certain boundedness conditions. 
These results have been used to tackle a range of  important problems including questions arising in combinatorial number theory, Ramsey theory,  positional games, list colourings of graphs and $H$-free graphs.

In this paper we provide several new short applications of the graph container method to counting problems in the Boolean lattice. 
In Section~\ref{sec:tilt} we asymptotically determine
the number of $(p,q)$-tilted Sperner families in $\mathcal P(n)$. 
In Section~\ref{sec:code} we give an upper bound on the number of $t$ error correcting codes, thereby giving a partial answer to a question of Sapozhenko \cite{saponew},  and an upper bound on
the number of so-called $2$-$(n,k,d)$-codes. Katona's intersection theorem~\cite{katona} determines the largest $t$-intersecting family in $\mathcal P(n)$. In Section~\ref{sec:int} we prove a random analogue of this result. We also prove counting versions of 
generalisations of Sperner's theorem: we give an alternative proof of a famous result of Kleitman~\cite{kleitmansperner} that gives an asymptotic formula for the number of antichains in $\mathcal P(n)$ (see Section~\ref{seca}). 
We then prove a two-coloured generalisation of this result in Section~\ref{secb}.
Some open problems are raised in Section~\ref{sec73}. Finally, in Section~\ref{newestsec} we give a construction which disproves two conjectures of Ilinca and Kahn~\cite{IK} on maximal independent sets and antichains in the Boolean lattice.
\COMMENT{AT: tweaked this paragraph slightly}

Section~\ref{sec:over} describes the general algorithm used for producing our graph container results. After this, each of the sections are self-contained and so can be read separately. 
However, there are two important themes which run throughout the paper and which we are keen to publicise. Firstly, for the proof of each of our container theorems, the key step is to apply various \emph{supersaturation} results. Roughly speaking, such results
state that if a vertex set $S$ in some auxiliary graph $G$ is significantly bigger than the size of the largest independent set, then $G[S]$ contains many edges.
Secondly, in some cases we need to apply a \emph{multi-stage} version of the Kleitman--Winston algorithm (and apply more than one supersaturation result). We explain this in more detail in Section~\ref{sec:over}.

\section{Notation and preliminaries}

For a given $n \in \mathbb N$, write $[n]:=\{1, \dots, n\}$. %Given $i \in \mathbb N$, let $[n]^i$ denote the set of all subsets of $[n]$ of size $i$.  
Denote $S_n$ the set of all permutations of $[n]$. Given a set $X$ we write $\mathcal P(X)$ for the set of all subsets of $X$.
Given  $k \in \mathbb N$, we write $\binom{X}{\leq k}$ to denote the set of all subsets of $X$ of size at most $k$ and define $\binom{X}{ k}$ and $\binom{X}{\geq k}$ analogously.
Given  $n\in \mathbb N$, we write, for example, $\binom{n}{\geq k}:=\binom{n}{ k}+\binom{n}{ k+1}+\dots+ \binom{n}{ n}$.
We say two sets $A$, $B$ are \emph{comparable} if $A \subset B$ or $B \subset A$. 

Given a graph $G$ we write $N_G(x)$ for the neighbourhood of a vertex $x \in V(G)$ and set $\deg _G (x):=|N_G (x)|$. We write $\Delta(G)$ for the \emph{maximum degree of $G$}.

Throughout the paper we omit floors and ceilings where the argument is unaffected. We write $0<\alpha \ll \beta \ll \gamma$ to mean that we can choose the constants
$\alpha, \beta, \gamma$ from right to left. More
precisely, there are increasing functions $f$ and $g$ such that, given
$\gamma$, whenever we choose  $\beta \leq f(\gamma)$ and $\alpha \leq g(\beta)$, all
calculations needed in our proof are valid. 
Hierarchies of other lengths are defined in the obvious way.

%%%%%%%%%%%%%%%

The following  well known bounds for  binomial coefficients will be useful later on.
\begin{fact}\label{fact1}
$$\binom{n}{n/2} \sim \sqrt{\frac{2}{\pi n}} 2^n.$$
\end{fact}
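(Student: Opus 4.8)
The plan is to derive this directly from Stirling's formula, $n! \sim \sqrt{2\pi n}\,(n/e)^n$. First I would treat the case when $n$ is even, writing $\binom{n}{n/2} = n!/\big((n/2)!\big)^2$ and substituting the Stirling estimate into the numerator and the denominator separately. The numerator becomes $\sqrt{2\pi n}\,(n/e)^n$, and the denominator becomes $2\pi(n/2)\,(n/(2e))^n$. The ratio of the exponential factors is $(n/e)^n/(n/(2e))^n = 2^n$, while the polynomial prefactor simplifies as $\sqrt{2\pi n}/(\pi n) = \sqrt{2/(\pi n)}$, which together give exactly the claimed asymptotic $\binom{n}{n/2} \sim \sqrt{2/(\pi n)}\,2^n$.

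For odd $n$, where $\binom{n}{n/2}$ is to be read as $\binom{n}{\lfloor n/2\rfloor}$, the same substitution works with $\big((n-1)/2\big)!$ and $\big((n+1)/2\big)!$ in the denominator. Here $\sqrt{2\pi(n-1)/2}\cdot\sqrt{2\pi(n+1)/2} = \pi\sqrt{n^2-1}\sim \pi n$, exactly as before, and the exponential part contributes $(n/(2e))^{n}\,(1+1/n)^{(n+1)/2}(1-1/n)^{(n-1)/2}$, where the two correction factors tend to $e^{1/2}$ and $e^{-1/2}$ respectively and hence multiply to $1$ in the limit. So the leading term is unchanged. Alternatively, one can simply remark that consecutive central binomial coefficients are asymptotically equal, so it suffices to verify the claim for even $n$.

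I do not anticipate any real obstacle: the statement is a routine consequence of Stirling's formula, and the whole computation is only a few lines. The single point requiring a moment's care is bookkeeping of the $\sqrt{\,\cdot\,}$ factors and confirming that the odd-$n$ correction terms cancel; but since the paper's stated convention is to omit floors and ceilings when the argument is unaffected, one may legitimately just restrict to even $n$ and bypass this entirely.
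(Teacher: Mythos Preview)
Your derivation via Stirling's formula is correct and is the standard argument. Note that the paper does not actually prove this statement: it is listed among ``well known bounds for binomial coefficients'' and stated as a Fact without proof, so there is nothing to compare against beyond observing that your approach is exactly the routine one the authors are implicitly invoking.
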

\begin{fact}\label{fact2} If $k=(n+c\sqrt{n})/2$ where $c=o(n^{1/6})$ then
$$\binom{n}{k} \sim \binom{n}{n/2} e^{-(c^2/2)}.$$
\end{fact}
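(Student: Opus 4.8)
The plan is to estimate the ratio $\binom{n}{k}/\binom{n}{n/2}$ head-on, without passing through Stirling's formula for the individual factorials. Since $\binom{n}{k}=\binom{n}{n-k}$ and $e^{-c^2/2}$ is unchanged under $c\mapsto -c$, we may assume $c\ge 0$ (the case $c=0$ being trivial). Write $j:=k-n/2=c\sqrt n/2$, so $n-k=n/2-j$ and, cancelling the common $(n/2)!$'s against $k!$ and $(n-k)!$,
$$\frac{\binom{n}{k}}{\binom{n}{n/2}}=\frac{(n/2-j+1)(n/2-j+2)\cdots(n/2)}{(n/2+1)(n/2+2)\cdots(n/2+j)}=\prod_{i=1}^{j}\left(1-\frac{j}{n/2+i}\right),$$
where as usual we ignore that $j$ may not be an integer (rounding changes the logarithm below by $o(1)$).

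First I would record the crude bound $j=o(n^{2/3})$, which is immediate from $c=o(n^{1/6})$; hence $j/(n/2+i)=O(n^{-1/3})=o(1)$ uniformly in $1\le i\le j$, every factor in the product lies in $(0,1+o(1))$, and we may take logarithms and use $\log(1-x)=-x-\tfrac{x^2}{2}+O(x^3)$. This yields
$$\log\frac{\binom{n}{k}}{\binom{n}{n/2}}=-\,j\sum_{i=1}^{j}\frac{1}{n/2+i}\;-\;\frac{j^2}{2}\sum_{i=1}^{j}\frac{1}{(n/2+i)^2}\;+\;O\!\left(\frac{j^4}{n^3}\right).$$

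The main computation is the first sum. Expanding $\frac{1}{n/2+i}=\frac{2}{n}\bigl(1-\tfrac{2i}{n}+O((i/n)^2)\bigr)$ and summing over $i\le j$ gives $\sum_{i=1}^{j}\frac{1}{n/2+i}=\frac{2j}{n}-\frac{2j^2}{n^2}+O\!\bigl(\tfrac{j}{n^2}+\tfrac{j^3}{n^3}\bigr)$, so that, using $j=c\sqrt n/2$,
$$-\,j\sum_{i=1}^{j}\frac{1}{n/2+i}=-\frac{2j^2}{n}+O\!\left(\frac{j^3}{n^2}+\frac{j^2}{n^2}\right)=-\frac{c^2}{2}+O\!\left(\frac{c^3}{\sqrt n}+\frac{c^2}{n}\right).$$
The remaining error terms are handled by the trivial bounds $\frac{j^2}{2}\sum_{i}(n/2+i)^{-2}=O(j^3/n^2)=O(c^3/\sqrt n)$ and $j^4/n^3=O(c^4/n)$. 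Since $c=o(n^{1/6})$ forces $c^3/\sqrt n=o(1)$, $c^4/n=o(1)$ and $c^2/n=o(1)$, we obtain $\log\bigl(\binom{n}{k}/\binom{n}{n/2}\bigr)=-\tfrac{c^2}{2}+o(1)$, which is exactly the claim.

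The one point requiring genuine care is that, because $c$ may grow (slowly) with $n$, it does not suffice to show the error in the exponent is $o(c^2)$ — it must be $o(1)$. This is precisely what pins down the hypothesis $c=o(n^{1/6})$: the first neglected term in the expansion of $\sum_i (n/2+i)^{-1}$ contributes an amount of order $j^3/n^2\asymp c^3/\sqrt n$ to the exponent, and this is $o(1)$ exactly when $c=o(n^{1/6})$. (An alternative route applies Stirling's formula together with Fact~\ref{fact1}; there the same phenomenon surfaces in the Taylor expansion of the binary entropy $H(\tfrac12+\tfrac jn)$ about $\tfrac12$ — the vanishing first derivative produces the $-c^2/2$, and the cubic term again contributes $O(c^3/\sqrt n)$.)
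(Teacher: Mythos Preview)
The paper does not actually prove this statement: it is listed in Section~2 among ``well known bounds for binomial coefficients'' and is stated as a \emph{Fact} without justification. So there is no proof in the paper against which to compare your argument.

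That said, your direct computation of the ratio $\binom{n}{k}/\binom{n}{n/2}$ as a telescoping product, followed by a second-order Taylor expansion of the logarithm, is correct and is one of the standard elementary routes to this local-limit estimate. Your tracking of the error terms is accurate, and your closing remark identifying $c^3/\sqrt{n}$ as the term that pins down the hypothesis $c=o(n^{1/6})$ is exactly right. The only cosmetic point is that you silently assume $n$ is even (so that $n/2$ is an integer); for odd $n$ one should interpret $\binom{n}{n/2}$ as $\binom{n}{\lfloor n/2\rfloor}$ and the same argument goes through with an additional $O(1/n)$ in the exponent, which is harmless.
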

\begin{fact}\label{fact3} For any $n,k \in \mathbb N$,
$$\binom{n}{k} \leq \left  ( \frac{e \cdot n}{k} \right  ) ^k.$$
\end{fact}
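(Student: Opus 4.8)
The plan is to bound $\binom{n}{k}$ from above by $n^k/k!$ and then to invoke the elementary lower bound $k!\ge (k/e)^k$; multiplying the two estimates gives exactly $(en/k)^k$.

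First I would write
$$\binom{n}{k}=\frac{1}{k!}\prod_{i=0}^{k-1}(n-i)\le \frac{n^k}{k!},$$
since each of the $k$ factors in the product is at most $n$. Next, to control $k!$ from below, I would use the Taylor expansion of the exponential: for every real $x\ge 0$ we have $e^{x}=\sum_{j\ge 0}x^{j}/j!\ge x^{k}/k!$, and specialising to $x=k$ gives $e^{k}\ge k^{k}/k!$, that is, $k!\ge k^{k}/e^{k}$. Combining the two displays yields
$$\binom{n}{k}\le \frac{n^k}{k!}\le \frac{n^k e^k}{k^k}=\left(\frac{e\cdot n}{k}\right)^{k},$$
as claimed.

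I do not expect any real obstacle here: the statement is a standard estimate and the derivation via $k!\ge (k/e)^k$ is the cleanest route (an alternative would be to feed Stirling's formula into the computation, but that is heavier than needed). The only points worth a remark are the degenerate cases — for $k=0$ both sides equal $1$, and for $k>n$ the left side is $0$, so the inequality is trivial — while for $1\le k\le n$ the argument above applies verbatim.
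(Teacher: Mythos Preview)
Your argument is correct and is the standard derivation of this well-known bound. The paper itself does not give a proof: Fact~\ref{fact3} is simply stated as a known inequality and used later in the paper, so there is no ``paper's own proof'' to compare against.
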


\COMMENT{define$\sim$}

%%%%%%%%%%%%%%
\section{The graph container algorithm}\label{sec:over}
For each of our problems, we will prove and then apply a \emph{container} result. We will first introduce some auxiliary graph $G$. For example, to prove Kleitman's theorem on antichains in the Boolean lattice, we will
define $G$ to have vertex set $\mathcal P(n)$ where distinct $A$ and $B$ are adjacent if they are comparable. Most of our container results then take the following general structure:
Let $I_{\max}$ denote the size of the largest independent set in $G$. Then there is a collection $\mathcal F$ of subsets of $V(G)$ such that:
\begin{itemize}
\item[(i)] $|\mathcal F|=2^{o(|I_{\max}|)}$;
\item[(ii)]  Every independent set $I$ in $G$ lies in some $F \in \mathcal F$;
\item[(iii)] $|F| \leq (1+o(1))|I_{\max}|$ for each $F \in \mathcal F$.
\end{itemize}
We refer to the elements of $\mathcal F$ as \emph{containers}. In some cases, when we only have an upper bound $ D$ on  $|I_{\max}|$, we in fact have $D$ instead of $|I_{\max}|$ in (i) and (iii).
Typically the container result will then immediately imply our desired counting theorem. For example, in the case of Kleitman's theorem, since independent sets in $G$ correspond to antichains in $\mathcal P(n)$,  we have that $|I_{\max}|=\binom{n}{\lfloor n/2 \rfloor}$. Thus, (i)--(iii) imply that there are $2^{(1+o(1))\binom{n}{\lfloor n/2 \rfloor}}$ antichains in $\mathcal P(n)$, as desired.

To prove each of our container results we will apply the following algorithm of Kleitman and Winston~\cite{kw1, kw2}.

\smallskip

{\noindent \bf The graph container algorithm.} Let  $V:=V(G)$, $n:=|V|$ and fix an arbitrary total order $v_1,\dots , v_n$ of $V$ and some $\Delta>0$. Let $I$ be an independent set in $G$. Set $G_0 :=G$ and $S:=\emptyset$. In Step~$i$ of the algorithm we do the following:
\begin{itemize}
\item[(a)] Let $u$ be the vertex of maximum degree in $G_{i-1}$ (ties are broken here by our fixed total ordering);
\item[(b)] If $u \notin I$ then define $G_i:= G_{i-1}\setminus \{u\}$ and move to Step~$i+1$;
\item[(c)] If $u \in I$ and $\deg _{G_{i-1} }(u) \geq \Delta $ then add $u$ to $S$; define $G_i:= G_{i-1}\setminus( \{u\}\cup N_G (u))$ and move to Step~$i+1$;
\item[(d)] If $u \in I$ and $\deg _{G_{i-1}} (u) < \Delta $ then define  $f(S):=V(G_i)$ and terminate.
\end{itemize}
Note that $I \subseteq S \cup f(S)$. We sometimes refer to $\Delta$ as the \emph{parameter} of the algorithm.

The algorithm produces a function $f:\binom{V}{\leq |V|/\Delta} \rightarrow \mathcal P(V)$. 
Indeed, the algorithm ensures that $|S|\leq |V|/\Delta$ and that $f$ is  well-defined. 

Let $\mathcal F$ denote the collection of sets $S\cup f(S)$ for each $S \in \binom{V}{\leq |V|/\Delta}$. By construction (ii) is satisfied. There are $\binom{V}{\leq |V|/\Delta}$ containers in $\mathcal F$. Thus, if one chooses $\Delta$ sufficiently large we can ensure that (i) is satisfied. At the end of the algorithm, $G_i$ has maximum degree less than $\Delta$, so is `sparse'. 
In a standard application of the algorithm,
 we then apply a supersaturation result to ensure that (iii) holds: roughly speaking, since $G_i$ is sparse it cannot be too much bigger than the largest independent set in $G$. Hence, $G_i$ and so $S \cup f(S)$ is not too big.

In some cases though, the value of $\Delta$ required to ensure that (i) holds is not small enough to immediately ensure (iii) also holds. That is, $\Delta(G_i)\leq \Delta$ may not imply that (iii) holds.
%Note that the larger the value of $\Delta$ the smaller the set of containers $\mathcal F$ is. However, the larger $\Delta$ is the denser $G_i$ can be at the end of the algorithm. In particular, in some cases the choice of $\Delta$ required to guarantee that (i)  holds is too big to immediately ensure that (iii) is satisfied also. 
In this case we have to analyse the algorithm more carefully. Roughly speaking, the idea is to first apply the algorithm with some relatively large parameter $\Delta '$. This will ensure (i) holds and by applying 
a supersaturation result the graph $G_i$ is not too big (though perhaps much bigger than $(1+o(1))|I_{\max}|$). We then continue the algorithm with a new, much smaller parameter $\Delta$ to ensure at the end of this process $G_i$ is much sparser and so (via another supersaturation result) (iii) is satisfied.
We will use this multi-stage approach in Section~\ref{sec:code}. This idea was first used only very recently in~\cite{sper}\COMMENT{AZW: are you sure about this? I assumed that the multi-stage approach was folklore and known for much longer? ----- Yes I asked Jozsi, you were right.} to prove a random analogue of Sperner's theorem.
We remark that when applying this approach in Section~\ref{sec:code}, we will not explicitly state it in this way (we only state the parameter $\Delta$ explicitly and then split the analysis of the algorithm in two), but the method described above is (implicitly) precisely what is happening.

%%%%%%%%%%%%%%%%%%%%%%%%%%
\section{Tilted Sperner families}\label{sec:tilt}
Let $\P(n)$ denote the power set of $[n]$,  ordered by inclusion. A subset $\A \subseteq \P(n)$ is an \emph{antichain} if for any $A, B \in \A$ with $A \subseteq B$ we have $A = B$. So $\binom{[n]}{k}$ is an antichain for any $0 \leq k \leq n$. A celebrated theorem of  Sperner~\cite{sperner} states that in fact no antichain in $\P(n)$ has size larger than $\binom{n}{\lfloor n/2 \rfloor}$. 

Given $A,B \subseteq [n]$ the \emph{subcube of $\P(n)$ spanned by $A$ and $B$} consists of all subsets of $A\cup B$ that contain $A\cap B$.
Kalai (see~\cite{long}) observed that $\mathcal A$ is an antichain precisely if it does not contain $A$ and $B$ such that, in the subcube of $\P(n)$ spanned by $A$ and $B$, $A$ is the top point and $B$ is the bottom point. He asked what happens if one `tilts' this condition. That is, for some $p,q \in \mathbb N$ we forbid $A$ to be $p/(p+q)$ of the way up this subcube and $B$ to be $q/(p+q)$ of the way up this subcube.
More precisely, we say that $\A \subseteq \P(n)$ is a \emph{$(p,q)$-tilted Sperner family} if $\A$ does not contain distinct $A,B$ such that $q|A\setminus B|=p|B\setminus A|$. So the case when $p\not = 0$, $q=0$ corresponds to antichains.

Let  $p,q\in \mathbb N$ be coprime with $p<q$. Leader and Long~\cite{long} proved that the largest $(p,q)$-tilted Sperner family in $\P(n)$ has size $(q-p+o(1))\binom{n}{n/2}$, where the lower bound is obtained by considering the union of the $q-p$ middle layers of the Boolean lattice (see~\cite{long} for an explanation of this).

In 1897, Dedekind~\cite{dedekindsperner} raised the question of how many antichains there are in $\P(n)$. This was famously resolved asymptotically by Kleitman~\cite{kleitmansperner} who proved that there are in fact 
$2^{(1+o(1))\binom{n}{n/2}}$ antichains. In this section we prove an analogue of this result for $(p,q)$-tilted Sperner families.

\begin{theorem}\label{tilt}
Let $p,q\in \mathbb N$ be coprime with $p<q$. Then there are 
$$2^{(q-p+o(1))\binom{n}{n/2}}$$
$(p,q)$-tilted Sperner families in $\P(n)$.
\end{theorem}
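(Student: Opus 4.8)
The plan is to prove Theorem~\ref{tilt} by the graph container method, following the general strategy outlined in Section~\ref{sec:over}. First I would set up the auxiliary graph $G$ on vertex set $\P(n)$ in which distinct $A,B$ are joined by an edge precisely when $q|A\setminus B|=p|B\setminus A|$; then $(p,q)$-tilted Sperner families are exactly the independent sets of $G$, and by Leader and Long~\cite{long} we have $|I_{\max}|=(q-p+o(1))\binom{n}{n/2}$. The lower bound in the theorem is immediate: every subset of the union of the $q-p$ middle layers is $(p,q)$-tilted (since a forbidden pair $A,B$ with $q|A\setminus B|=p|B\setminus A|$, $A\ne B$, has $|A|-|B|=|A\setminus B|-|B\setminus A|=(p-q)|B\setminus A|/q$, forcing $|A|$ and $|B|$ to differ by at least $q-p\ge 1$... one must be slightly careful here, but morally the middle $q-p$ layers are tilted-free), giving at least $2^{(q-p+o(1))\binom{n}{n/2}}$ families. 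So the content is the matching upper bound.

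For the upper bound I would invoke the graph container algorithm of Section~\ref{sec:over} with an appropriate constant parameter $\Delta$ (chosen at the end, large), which produces a family $\mathcal F$ of containers with $|\mathcal F|\le \binom{|\P(n)|}{\le 2^n/\Delta} = 2^{o(\binom{n}{n/2})}$ satisfying (ii), and such that each container $S\cup f(S)$ has the property that $G$ restricted to $f(S)=V(G_i)$ has maximum degree less than $\Delta$. The crucial step — and the main obstacle — is the supersaturation statement: I must show that if $W\subseteq \P(n)$ has $|W|\ge (q-p+\eps)\binom{n}{n/2}$, then $G[W]$ has a vertex of degree at least $\Delta$ (indeed that $G[W]$ has $\Omega_\eps(|W|)$ edges, or enough edges that some vertex has large degree after the deletion process). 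Combined with the algorithm this forces $|f(S)|\le (q-p+o(1))\binom{n}{n/2}$, hence $|S\cup f(S)|\le (q-p+o(1))\binom{n}{n/2}$, giving (iii); then (i)--(iii) yield $2^{(q-p+o(1))\binom{n}{n/2}}$ families.

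To prove the supersaturation result I would adapt the standard approach for Sperner-type theorems via symmetric chain (or permutation/randomly-chosen-chain) arguments. Partition $\P(n)$ into symmetric chains (Dilworth/de~Bruijn--Tengbergen--Kruyswijk); within a single symmetric chain of length $\ell$, the elements at heights $j$ and $j'$ form a forbidden pair iff $q(j'-j)$... one checks which pairs along a chain are edges of $G$ and deduces that $W$ meeting a chain in more than $q-p$ elements already contributes edges, but this only handles edges inside chains. The real work is a counting/averaging argument: pick a random maximal chain (equivalently a random permutation of $[n]$) and estimate, for a set $W$ of the stated size, the expected number of pairs $\{A,B\}\subseteq W$ on the chain that form a $G$-edge; if this is $\ge c_\eps$ for a positive constant then summing over all chains with the right multiplicities gives $\Omega_\eps(\binom{n}{n/2})$ edges in $G[W]$, and a convexity/pigeonhole argument then extracts a vertex of degree $\ge\Delta$. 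I expect the delicate points to be (a) verifying that the middle $q-p$ layers really are extremal and getting the constant in the supersaturation right uniformly in $p,q$, and (b) handling the fact that edges of $G$ need not lie on a single symmetric chain — this may require either a more clever chain decomposition tailored to the $(p,q)$ condition, or an additional Kruskal--Katona / shifting type input, or a direct double-counting over the relevant "tilted pairs" that bounds the number of such pairs below by a constant fraction of $|W|$ once $|W|$ exceeds the extremal size.
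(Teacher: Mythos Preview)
Your overall framework is right --- set up the auxiliary graph $G$, prove a supersaturation lemma, feed it into the Kleitman--Winston algorithm to get containers --- and this is exactly what the paper does. But there are two genuine gaps in your execution.

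First, the parameter $\Delta$ cannot be a constant. With $\Delta$ fixed you get $|S|\le 2^n/\Delta$ and hence
\[
|\mathcal F|\le \binom{2^n}{\le 2^n/\Delta}=2^{\Theta(2^n)},
\]
which is far larger than $2^{o(\binom{n}{n/2})}=2^{o(2^n/\sqrt{n})}$; condition~(i) fails. You need $\Delta$ to grow with $n$. The paper takes $\Delta=\delta n$ (for $\delta=\delta(\eps)$ coming from supersaturation), which is what forces the supersaturation lemma to produce not just $\Omega_\eps(\binom{n}{n/2})$ edges but $\Omega_\eps(n\,2^n)$ edges (in fact it yields $\delta\binom{n}{n/2}n^{p+q}$, far more than needed).

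Second, and more seriously, your proposed route to supersaturation via symmetric chains or random maximal chains cannot work at all here. If $A\subset B$ then $|A\setminus B|=0$, and the relation $q|A\setminus B|=p|B\setminus A|$ with $p,q\ge 1$ forces $A=B$; so \emph{no} edge of $G$ lies along any chain. Your caveat (b) is therefore not a technical wrinkle but the whole problem. The paper's fix is precisely the ``clever chain decomposition tailored to the $(p,q)$ condition'' you allude to: following Leader and Long, one writes a random permutation of $[n]$ as $(a_1,\dots,a_{qm},b_1,\dots,b_{pm})$ and sets
\[
C_i:=\{a_j:j\le qi+k'\}\cup\{b_{j'}:j'>pi\},
\]
so that every pair $C_i,C_j$ in this ``tilted chain'' $\mathcal C$ is a $G$-edge by construction. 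One then computes $\mathbb E|\mathcal A\cap\mathcal C|\ge 1+\eps/(2(q-p))$, converts this via $\sum_\pi\binom{\alpha(\pi)}{2}\ge \sum_\pi(\alpha(\pi)-1)$ into a lower bound on the number of good pairs, and divides by the number of permutations any single pair can lie in. This is the key idea your sketch is missing; once you have it, the container argument with $\Delta=\delta n$ goes through exactly as you outlined.
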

To prove Theorem~\ref{tilt} we will apply the following \emph{supersaturation} version of the Leader--Long theorem~\cite{long}. The proof applies the same  averaging argument strategy used in~\cite{long}.

\begin{lemma}\label{supertilt}
Let $p,q\in \mathbb N$ be coprime with $p<q$. 
Given any $\eps >0$, there exist $\delta >0$ and $n_0 \in \mathbb N$ such that the following holds. Suppose that $n \geq n_0$ and $\mathcal A\subseteq \P(n)$ such that 
$|\mathcal A|\geq (q-p+\eps) \binom{n}{n/2}$. Then there are at least $\delta \binom{n}{n/2} n^{p+q}$ pairs $A,B \in \mathcal A$ such that $q|A\setminus B|=p|B\setminus A|$.
\end{lemma}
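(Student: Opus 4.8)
The plan is to mimic the averaging argument of Leader and Long~\cite{long}, but bookkeeping the number of forbidden pairs produced rather than merely deducing the existence of one. As a preliminary step I would restate the Leader--Long theorem in a finitary form: there is an $m_0 = m_0(\eps)$ such that for every $m \ge m_0$, every $(p,q)$-tilted Sperner family in $\P(m)$ has at most $(q-p+\eps/2)\binom{m}{\lfloor m/2\rfloor}$ members; this is immediate from~\cite{long}.

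The heart of the proof is an averaging step. For a suitable parameter $m$ (possibly growing slowly with $n$) I would consider random embedded copies of $\P(m)$ inside $\P(n)$, each obtained by choosing a random $m$-subset $I\subseteq [n]$ together with a random ``base'' $D\subseteq [n]\setminus I$, the copy being $\C_{I,D} := \{D\cup T : T\subseteq I\}\cong \P(m)$; crucially the law of $|D|$ must be tuned so that the copies cluster around the layers of $\P(n)$ where $\A$ actually lives. Using Facts~\ref{fact1}--\ref{fact3} to control the relevant binomial asymptotics, I would show that $\mathbb E\bigl[|\A\cap \C_{I,D}|\bigr]$ exceeds the finitary Leader--Long threshold $(q-p+\eps/2)\binom{m}{\lfloor m/2\rfloor}$ of the copy, so that a non-negligible proportion of copies $\C$ satisfy $|\A\cap \C| > (q-p+\eps/2)\binom{m}{\lfloor m/2\rfloor}$. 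Each such ``good'' copy then fails to be $(p,q)$-tilted Sperner, hence contains a pair $A,B$ with $q|A\setminus B| = p|B\setminus A|$; moreover $\A\cap \C$ again satisfies the hypothesis of Lemma~\ref{supertilt} inside $\P(m)$ (with $\eps$ replaced by $\eps/2$), so one may recurse and in fact find many forbidden pairs inside every good copy.

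Finally I would double count: summing the number of forbidden pairs over the good copies, and using that a given pair $\{A,B\}$ with $q|A\setminus B| = p|B\setminus A|$ lies in a number of copies depending only on $|A\setminus B|+|B\setminus A|$ (and is negligible unless this is small), dividing through produces a lower bound of the desired shape $\delta\binom{n}{n/2}n^{p+q}$; here the factor $n^{p+q}$ reflects that a single ``extra'' set lies in $\Theta(n^{p+q})$ forbidden configurations — choosing which $p$ elements to delete and which $q$ to add, each in $\Theta(n)$ ways. The main obstacle is the quantitative part of the averaging step: one must choose $m$ and, more delicately, the law of the base $D$ so that the expected intersection genuinely beats the threshold even though a $(p,q)$-tilted Sperner family of size $(q-p+\eps)\binom{n}{n/2}$ need not be concentrated near the central layer but may spread over roughly $\sqrt{n\log n}$ layers. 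I expect this to require first reducing to the sub-case where $\A$ is not too spread out (treating a very spread $\A$ separately, where one instead directly exhibits many forbidden pairs with large symmetric difference), and then carrying out the counting carefully enough that the final power of $n$ comes out as exactly $p+q$.
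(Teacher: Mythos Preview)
Your outline is not the paper's argument, and as written it is a strategy rather than a proof: you yourself flag that the law of $D$ has to be tuned, that $\A$ may be spread over $\Theta(\sqrt n)$ layers, and that the final double count must produce exactly $n^{p+q}$ --- none of these steps is actually carried out. In particular, with $m$ constant and $|D|$ fixed near $n/2-m/2$, a subcube $\C_{I,D}$ only meets layers within $m/2$ of the middle, so to capture a family living on $C\sqrt n$ layers you would need $m\gg \sqrt n$, at which point your ``recurse on $m$'' step becomes circular. These difficulties are not obviously fatal, but they are real and you have not addressed them; the proposal is a plausible plan, not a proof.

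The paper avoids all of this by following the Leader--Long argument \emph{internally} rather than using it as a black box. Instead of random subcubes it averages over random \emph{tilted chains}: for a random permutation $\pi\in S_n$ written as $(a_1,\dots,a_{qm},b_1,\dots,b_{pm})$ (with $n=(p+q)m$), one sets $C_i=\{a_j:j\le qi+k'\}\cup\{b_{j'}:j'>pi\}$ and $\C_\pi=\{C_i\}$. Any two sets in $\C_\pi$ already form a forbidden pair, so $\sum_\pi\binom{|\A\cap\C_\pi|}{2}$ directly counts (with multiplicity) the forbidden pairs. After restricting $\A$ to the middle $O(\sqrt n)$ layers and to a single residue class mod $q-p$, one gets $\mathbb E|\A\cap\C_\pi|\ge 1+\eps/(2(q-p))$, hence $\sum_\pi\binom{|\A\cap\C_\pi|}{2}\ge \eps n!/(2(q-p))$. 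The overcount of a fixed pair $A,B$ with $|A\setminus B|=p$, $|B\setminus A|=q$ is exactly $|A\cap B|!\,|A\setminus B|!\,|B\setminus A|!\,|\overline{A\cup B}|!$, which is at most $p!\,q!\,(n/2+C\sqrt n)!\,(n/2-C\sqrt n-p-q)!$; dividing yields the $\delta\binom{n}{n/2}n^{p+q}$ bound with no recursion and no delicate tuning. The moral: the right ``random substructure'' here is a one-dimensional tilted chain, not a subcube --- because the chain already encodes the $(p,q)$-condition, the averaging and the double count are both exact and the $n^{p+q}$ factor drops out of the factorial arithmetic.
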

We remark that the conclusion of Lemma~\ref{supertilt} is actually somewhat stronger than what is needed in the application to the proof of Theorem~\ref{tilt}.
Indeed, for our application instead of $\delta \binom{n}{n/2} n^{p+q}$  such pairs, having only $\delta n2^n$ would be sufficient.
\proof
Given $\eps >0$, define  $\delta >0$ and $C, n_0 \in \mathbb N$ such that
$$0<1/n_0 < \delta  \ll 1/C \ll \eps , 1/p,1/q.$$
Let $n\geq n_0$ and $\mathcal A\subseteq \P(n)$ such that 
$|\mathcal A|\geq (q-p+\eps) \binom{n}{n/2}$.

Let $\mathcal A_i$ denote the set of $A \in \mathcal A$ with $|A|=i$. Since $1/n_0 \ll 1/C \ll \eps$,
$$\sum _{i \geq n/2 +C \sqrt{n}} \binom{n}{i} +\sum _{i \leq n/2 -C \sqrt{n}} \binom{n}{i} \leq \frac{\eps}{2} \binom{n}{n/2}.$$
Thus, we may assume that $|\mathcal A|\geq (q-p+\eps/2) \binom{n}{n/2}$ and every $A \in \mathcal A$ satisfies
$n/2 -C \sqrt{n} \leq |A|\leq n/2 +C \sqrt{n}$.

For simplicity we may assume that $n=(p+q)m$ for some $m \in \mathbb N$ (the other cases follow identically). Clearly there exists $k \in [0,q-p-1]$ such that
\begin{align} \label{kbound}
\sum _{i \equiv k \mod (q-p)} |\mathcal A_i| \geq \left(1+ \frac{\eps}{2(q-p)}\right) \binom{n}{n/2}.
\end{align}
Define $k' \in [0, q-p-1]$ so that $k' \equiv k-pm \mod (q-p)$.

Pick a random ordering of $[n]$ that we denote by $(a_1,  \ldots, a_{qm},b_1,\ldots,b_{pm})$ (this can be viewed as a permutation of $[n]$). Given this ordering, define $C_i:=\{a_j : j\in [qi+k']\}\cup\{b_{j'} : j'\in [pi+1,pm]\}$ and set $\mathcal{C}:=\{C_i : i\in [0,m-1]\}.$ 
Notice that for every $i<j$ we have $|C_i|=pm+k'+(q-p)i \equiv k \mod (q-p)$ and $p|C_j\backslash C_i|=q|C_i\backslash C_j|$. Further, for each $ i \equiv k \mod (q-p)$ where
$pm+k'\leq n/2 -C \sqrt{n} \leq i\leq n/2 +C \sqrt{n}\leq qm+k'$, there is precisely one set $C_{i'}$ in $\mathcal C$ of size $i$.\COMMENT{The size of $C_i$ range from $pm+k'$ up to $n$. $pm+k'$ is much smaller than $n/2 -C \sqrt{n}$, so all set sizes required are covered in $\mathcal C$}

Consider the random variable $X:=|\mathcal{A}\cap \mathcal{C}|$. 
Let  $i \equiv k \mod (q-p)$ where $n/2 -C \sqrt{n} \leq i\leq n/2 +C \sqrt{n}$, and set $i'$ so that $i=pm+k'+(q-p)i'$.
Note that each set $B\in \binom{[n]}{i}$ is equally likely to be  $C_{i'}$, therefore $\mathbb P[B\in\mathcal{C}]=\frac{1}{\binom{n}{i}}$. So 
\begin{align}\label{exbound}
\mathbb EX=\sum_{i \equiv k \mod (q-p)} \frac{|\mathcal{A}_i|}{\binom{n}{i}}\geq \sum_{i \equiv k \mod (q-p)} \frac{|\mathcal{A}_i|}{\binom{n}{n/2}} \stackrel{(\ref{kbound})}{\geq} 1+\frac{\eps}{2(q-p)}.
\end{align}

Consider any permutation $\pi \in S_n$. Write $\pi$ as $(a'_1,  \ldots, a'_{qm},b'_1,\ldots,b'_{pm})$. Define $C_{\pi,i}:=\{a'_j : j\in [qi+k']\}\cup\{b'_{j'} : j'\in [pi+1,pm]\}$ and set $\mathcal{C_ \pi}:=\{C_{\pi,i} : i\in [0,m-1]\}.$ So the set $\mathcal C$ is simply $\mathcal C_\pi$ for a randomly selected permutation $\pi$. Set $\alpha (\pi):=|\mathcal A \cap \mathcal C_\pi|$.
Thus, 
$$\mathbb EX =\frac{1}{n!} \sum _{\pi \in S_n} \alpha (\pi).$$
Together with (\ref{exbound}) this implies that
\begin{align}\label{final}
\sum _{\pi \in S_n} \binom{\alpha (\pi)}{2} \geq \sum _{\pi \in S_n} (\alpha (\pi)-1) \geq \frac{\eps n!}{2(q-p)}.
\end{align}

We say a pair $A,B \in \mathcal A$ is \emph{good} if there is some permutation $\pi$ such that $A,B \in \mathcal C _\pi$. 
That is, $A=C_{\pi,i}$ and $B=C_{\pi,j}$ for some $i,j$ and $\pi \in S_n$. 
In this case, since $A,B \in \mathcal A$, we have $n/2 -C \sqrt{n}\leq |C_{\pi,i}|,| C_{\pi,j}|\leq n/2 +C \sqrt{n}$.
Further, by definition of $\mathcal C_\pi$:
\begin{itemize}
\item[(i)] $|A \cap B| \geq n/2 -(2p+1)C \sqrt{n};$
\item[(ii)]  $q|A\setminus B|=p|B\setminus A|$ or $p|A\setminus B|=q|B\setminus A|$.
\end{itemize}
(By relabeling $A,B$ we may assume that $q|A\setminus B|=p|B\setminus A|$.)\COMMENT{relabeling used implicitly later on}
Moreover, if $A,B$ is good, the definition of the $\mathcal C_\pi$ implies that there are precisely
\begin{align}\label{upper}
|A\cap B|!|A\setminus B|!|B\setminus A|!|\overline{ A\cup B}|!
\end{align}
permutations $\pi$ such that $A,B \in \mathcal C _\pi$. 
Additionally, the following conditions hold:
\begin{itemize}
\item $|A \cap B|, |\overline{A\cup B}| \leq n/2+C\sqrt{n}$;
\item $p\leq |A\setminus B| \stackrel{(i)}{\leq} (2p+2)C \sqrt{n}$;
\item $q \leq |B\setminus A|\stackrel{(i)}{\leq} (2p+2)C \sqrt{n} $.
\end{itemize}
Under these constraints, an upper bound on (\ref{upper}) is
$$p!q!(n/2+C\sqrt{n})!(n/2-C\sqrt{n}-p-q)!.$$
Together with (\ref{final}) this implies that there are at least
$$\frac{\eps n!}{2(q-p)} \times \frac{1}{p!q!(n/2+C\sqrt{n})!(n/2-C\sqrt{n}-p-q)!} \geq \frac{\eps n!}{2(q-p)} \times \frac{\delta ^{1/2} n^{p+q}}{(n/2)!(n/2!)}\geq \delta \binom{n}{n/2} n^{p+q}$$
good pairs $A,B \in \mathcal A$. (In the last inequality we apply Fact~\ref{fact2}.) Since each such pair satisfies (ii), this completes the proof.
\endproof
Lemma~\ref{supertilt} can now be applied to prove the following \emph{container} lemma  which immediately implies Theorem~\ref{tilt}.

\begin{lemma}\label{tiltcont} Let $p,q\in \mathbb N$ be coprime with $p<q$.
There is a collection $\mathcal F \subseteq \P(n)$ with the following properties:
\begin{itemize}
\item[(i)]  $|\mathcal F|=2^{o(1)\binom{n}{n/2}}$;
\item[(ii)] If $\mathcal A \subseteq \P(n)$ is a $(p,q)$-tilted Sperner family, then $\mathcal A$ is contained in some member of $\mathcal F$;
\item[(iii)] $|F|\leq (q-p+o(1))\binom{n}{n/2}$ for every $F \in \mathcal F$.
\end{itemize}
\end{lemma}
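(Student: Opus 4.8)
The plan is to feed the supersaturation statement of Lemma~\ref{supertilt} into the Kleitman--Winston algorithm described in Section~\ref{sec:over}, in its ``standard'' (single-stage) form. First I would fix $\eps>0$ (which will be taken to $0$ at the end) and define the auxiliary graph $G=G_n$ on vertex set $\P(n)$ in which distinct $A,B$ are joined by an edge precisely when $q|A\setminus B|=p|B\setminus A|$ or $p|A\setminus B|=q|B\setminus A|$; then $(p,q)$-tilted Sperner families are exactly the independent sets of $G$. The target container size is $D:=(q-p+\eps)\binom{n}{n/2}$, and Lemma~\ref{supertilt} says that any vertex subset of size at least $D$ spans at least $\delta\binom{n}{n/2}n^{p+q}$ edges, hence has average degree at least roughly $2\delta\binom{n}{n/2}n^{p+q}/2^n$, which (using Fact~\ref{fact1}) is of order $\delta' n^{p+q-1/2}$ for some $\delta'>0$; in particular it is $\omega(1)$ as $n\to\infty$. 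This is the only property of $G$ I need.

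Next I would run the algorithm on $G$ with parameter $\Delta:=n^{p+q-1}$ (any function that grows polynomially in $n$ but is asymptotically smaller than the average-degree bound above would do). As explained in Section~\ref{sec:over}, the algorithm produces, for every independent set $\mathcal A$, a ``fingerprint'' $S\in\binom{\P(n)}{\leq 2^n/\Delta}$ with $\mathcal A\subseteq S\cup f(S)$, where $f(S)=V(G_i)$ is the vertex set surviving at termination and has maximum degree less than $\Delta$ in $G$. I then let $\mathcal F$ be the collection of all sets $S\cup f(S)$ over $S\in\binom{\P(n)}{\leq 2^n/\Delta}$. Property (ii) is immediate from $\mathcal A\subseteq S\cup f(S)$. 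For (i), $|\mathcal F|\leq \binom{2^n}{\leq 2^n/\Delta}\leq (e\Delta)^{2^n/\Delta}$ by Fact~\ref{fact3}, and since $\log_2$ of this is $O\big((2^n/\Delta)\log \Delta\big)=O\big(2^n \log n / n^{p+q-1}\big)=o(2^n/\sqrt n)=o\big(\binom{n}{n/2}\big)$ using $p<q$ so $p+q\geq 3$ — actually $p+q\ge 1+2=3$ forces $p+q-1\ge 2$, comfortably enough — we get $|\mathcal F|=2^{o(1)\binom{n}{n/2}}$.

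For (iii), suppose some $F=S\cup f(S)$ had $|F|\ge (q-p+\eps)\binom{n}{n/2}=D$; since $|S|=o\big(\binom{n}{n/2}\big)$ this forces $|f(S)|\ge (q-p+\eps/2)\binom{n}{n/2}$ for large $n$. But $f(S)=V(G_i)$ has maximum degree less than $\Delta=n^{p+q-1}$ in $G$, hence spans fewer than $\tfrac12|f(S)|\Delta\le \tfrac12 \cdot q\binom{n}{n/2}\cdot n^{p+q-1}=o\big(\binom{n}{n/2}n^{p+q}\big)$ edges, contradicting Lemma~\ref{supertilt} applied to the set $f(S)$ (which has size at least $(q-p+\eps/2)\binom{n}{n/2}$, so the lemma applies with $\eps/2$ in place of $\eps$, giving at least $\delta\binom{n}{n/2}n^{p+q}$ edges). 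Hence $|F|< (q-p+\eps)\binom{n}{n/2}$ for every $F\in\mathcal F$, which is (iii). Finally, since $\eps>0$ was arbitrary, a standard diagonalisation over a sequence $\eps\to 0$ gives a single collection with $|F|\le (q-p+o(1))\binom{n}{n/2}$.

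I expect the only genuinely delicate point to be the bookkeeping that balances the two error terms: $\Delta$ must be large enough that $(2^n/\Delta)\log\Delta = o\big(\binom{n}{n/2}\big)$ (so that $|\mathcal F|$ is subexponential in $\binom{n}{n/2}$), yet small enough that the max-degree bound $|f(S)|\Delta$ is $o\big(\binom{n}{n/2}n^{p+q}\big)$ so as to contradict supersaturation; the condition $p<q$ (hence $p+q\ge 3$) is exactly what leaves a polynomial-in-$n$ window for $\Delta$ where both hold, and $\Delta=n^{p+q-1}$ sits safely inside it. Everything else is a direct invocation of the algorithm of Section~\ref{sec:over} together with Facts~\ref{fact1} and~\ref{fact3}; Theorem~\ref{tilt} then follows since the number of $(p,q)$-tilted Sperner families is at most $\sum_{F\in\mathcal F} 2^{|F|}\le |\mathcal F|\cdot 2^{(q-p+o(1))\binom{n}{n/2}}=2^{(q-p+o(1))\binom{n}{n/2}}$, while the lower bound $2^{(q-p)\binom{n}{n/2}}$ comes from taking all subfamilies of the union of the $q-p$ middle layers (the Leader--Long extremal construction).
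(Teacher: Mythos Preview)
Your proposal is correct and follows essentially the same route as the paper: define the same auxiliary graph $G$, run the single-stage Kleitman--Winston algorithm, and invoke Lemma~\ref{supertilt} to bound the container sizes. The only substantive difference is the choice of the algorithm parameter: the paper takes $\Delta=\delta n$ (cf.\ the remark after Lemma~\ref{supertilt} that $\delta n 2^n$ edges already suffice), whereas you take the larger $\Delta=n^{p+q-1}$; both lie in the admissible window and the rest of the argument is identical.

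One small slip: in your proof of (iii) you write $\tfrac12|f(S)|\Delta\le \tfrac12\, q\binom{n}{n/2}\, n^{p+q-1}$, implicitly using $|f(S)|\le q\binom{n}{n/2}$, which is not given. Either replace this by the trivial bound $|f(S)|\le 2^n=O(\sqrt{n}\binom{n}{n/2})$, which still yields $o\bigl(\binom{n}{n/2}n^{p+q}\bigr)$, or (cleaner) pass to a subset of $f(S)$ of size exactly $(q-p+\eps/2)\binom{n}{n/2}$, which inherits the max-degree bound and to which Lemma~\ref{supertilt} applies directly.
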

\proof
Let $\eps >0$ and let $\delta$, $n_0$ be as in Lemma~\ref{supertilt}. Let $n \geq n_0$.
Define $G$ to be the graph with vertex set $\P(n)$ in which distinct sets $A$ and $B$ are adjacent if and only if $p|A \setminus B|=q|B\setminus A|$ or $q|A \setminus B|=p|B\setminus A|$. Thus a $(p,q)$-tilted Sperner family in $\P(n)$ is precisely an independent set in $G$. 

\begin{claim} 
There exists a function $f : \binom{V(G)}{\leq 2^n/\delta n} \to \binom{V(G)}{\leq (q-p+\eps)\binom{n}{n/2}}$  such that, for any independent set $I$ in $G$, there is a subset $S \subseteq I$ where 
$S \in \binom{V(G)}{\leq 2^n/\delta n} $
 and   $I \subseteq S \cup f(S)$. 
\end{claim}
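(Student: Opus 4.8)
The plan is to run the Kleitman--Winston graph container algorithm described in Section~\ref{sec:over} on the graph $G$ with a carefully chosen parameter $\Delta$, and then invoke the supersaturation Lemma~\ref{supertilt} to control the size of the terminal graph. First I would set $\Delta := \delta n$ (roughly; I expect the right choice to be something like $\delta' n$ for a constant $\delta'$ depending on $\delta$, $p$, $q$). Running the algorithm on any independent set $I$ of $G$ with this parameter produces, as in Section~\ref{sec:over}, a set $S \subseteq I$ with $|S| \le |V(G)|/\Delta = 2^n/(\delta n)$ and a graph $G_i = G[f(S)]$ with $\Delta(G_i) < \Delta$, satisfying $I \subseteq S \cup f(S)$. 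The function $f$ is exactly the one produced by the algorithm; it is well-defined and has domain $\binom{V(G)}{\le 2^n/\delta n}$, which matches the claim.

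The substantive step is verifying that $|f(S)| \le (q-p+\eps)\binom{n}{n/2}$, i.e.\ that the terminal graph is small. This is where Lemma~\ref{supertilt} enters. Suppose for contradiction that $|f(S)| > (q-p+\eps)\binom{n}{n/2}$. Then, applying Lemma~\ref{supertilt} to the set $\mathcal{A} := f(S)$, there are at least $\delta \binom{n}{n/2} n^{p+q} \ge \delta \binom{n}{n/2} n$ edges of $G$ inside $f(S)$ (here I only need the weaker bound $\delta n 2^n$, as the authors remark after Lemma~\ref{supertilt}, using $\binom{n}{n/2} \ge 2^n/n$ or Fact~\ref{fact1}). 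On the other hand, $G[f(S)]$ has maximum degree less than $\Delta = \delta n$, so the number of edges inside $f(S)$ is at most $\tfrac12 |f(S)| \cdot \delta n \le \tfrac12 \cdot 2^n \cdot \delta n$. Comparing these two bounds gives a contradiction provided the constants are chosen so that the supersaturation count strictly exceeds the max-degree bound; this just requires the edge count $\delta n 2^n$ from supersaturation to beat $\tfrac12 \delta n 2^n$, which one arranges by choosing $\Delta$ slightly smaller than $\delta n$, or equivalently by noting the supersaturation constant can be taken larger than the max-degree constant. So in fact one should fix the hierarchy $1/n_0 \ll \delta \ll \eps$ first, then set $\Delta$ so that $\tfrac12 \Delta \cdot 2^n < \delta n 2^n$, e.g.\ $\Delta = \delta n$ works directly since $\tfrac12 \delta n \cdot 2^n < \delta n \cdot 2^n \le \delta \binom{n}{n/2} n^{p+q}$.

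The main obstacle, such as it is, is purely bookkeeping: making sure the codomain of $f$ as stated, $\binom{V(G)}{\le (q-p+\eps)\binom{n}{n/2}}$, is consistent with the bound just derived, and that the edge-counting inequality is set up with the right direction and constants so that $\Delta(G[f(S)]) < \Delta$ genuinely forces $|f(S)| \le (q-p+\eps)\binom{n}{n/2}$ rather than merely some weaker bound. One subtlety to watch: the supersaturation lemma requires $n \ge n_0$ and $|\mathcal{A}| \ge (q-p+\eps)\binom{n}{n/2}$ with a strict/non-strict threshold, so I would phrase the contradiction hypothesis as $|f(S)| \ge (q-p+\eps)\binom{n}{n/2}$ and conclude $|f(S)| < (q-p+\eps)\binom{n}{n/2}$, which is exactly what is claimed. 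Once the Claim is proved, Lemma~\ref{tiltcont} follows immediately: take $\mathcal{F}$ to be the collection of all sets $S \cup f(S)$ over $S \in \binom{V(G)}{\le 2^n/\delta n}$; part (ii) holds since $I \subseteq S \cup f(S)$; part (iii) holds since $|S \cup f(S)| \le 2^n/(\delta n) + (q-p+\eps)\binom{n}{n/2} = (q-p+\eps+o(1))\binom{n}{n/2}$ using Fact~\ref{fact1}; and part (i) holds because $|\mathcal{F}| \le \binom{2^n}{\le 2^n/(\delta n)} \le 2^{O((\log n)/(\delta n)) \cdot 2^n} = 2^{o(\binom{n}{n/2})}$ by Fact~\ref{fact3} and Fact~\ref{fact1}. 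Letting $\eps \to 0$ slowly with $n$ then gives the stated $o(1)$ forms.
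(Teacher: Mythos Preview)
Your proposal is correct and follows essentially the same approach as the paper: run the Kleitman--Winston algorithm with parameter $\Delta=\delta n$, then use Lemma~\ref{supertilt} (via its contrapositive) to bound $|f(S)|$. The hand-wringing about tuning constants is unnecessary, since the paper's comparison $e(G_i)<\delta n\cdot 2^n<\delta\binom{n}{n/2}n^{p+q}$ (using $p+q\ge 3$) already gives the contradiction directly with $\Delta=\delta n$.
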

To prove the claim,
fix an arbitrary total order $v_1, \dots, v_{2^n}$ on the vertices of $V(G)$. Given any independent set $I$ in $G$, define $G_0 := G$, and take $S$  to be initially empty. We add vertices to $S$  through the following iterative process:
 At Step $i$, let $u$ be the maximum degree vertex of $G_{i-1}$ (with ties broken by our fixed total order). If $u \notin I$ then define $G_i := G_{i-1} \setminus \{u\}$, and proceed to Step $i+1$. Alternatively, if $u \in I$ and $\deg_{G_{i-1}}(u) \geq \delta n$ then add $u$ to $S$, define $G_i := G_{i-1} \setminus (\{u\} \cup N_G(u))$, and proceed to Step $i+1$. Finally, if $u \in I$ and $\deg_{G_{i-1}}(u) < \delta n$, then set $f(S) := V(G_i)$ and terminate.

Observe that for any independent set $I $ in $G$ the process defined ensures that $S \subseteq I$ where $|S| \leq 2^{n}/\delta n$ and $I \subseteq S \cup f(S)$. Further, at the end of the process we know that
$\Delta (G_i) <\delta n$ and so $e(G_i)<  \delta n 2^n < \delta \binom{n}{n/2} n^{p+q}$. Hence, Lemma~\ref{supertilt} implies that $|f(S)|=|V(G_i)|\leq (q-p+\eps)\binom{n}{n/2}$.

To complete the claim we must show that $f$ is well-defined. That is, we must check that if the process described above yields the same set $S$ when applied to independent sets $I$ and $I'$, then it should also yield the same set $f(S)$. However, this is a consequence of the fact that we always chose $u$ to be the vertex of $I$ of maximum degree in $G_{i-1}$.  Thus, the claim is proven.

\medskip

Define $\mathcal F$ to be the collection of all the sets $S \cup f(S)$ for every $S \in \binom{V(G)}{\leq 2^n/\delta n} $. Then  (i) and (ii) hold and $|F|\leq (q-p+\eps)\binom{n}{n/2}+2^n/\delta n
\leq (q-p+2\eps)\binom{n}{n/2}$ for every $F \in \mathcal F$, as desired.

\endproof

%%%%%%%%%%%%%%%%%%%%%%%%%%%%
\section{The number of $t$ error correcting codes and $2$-$(n,k,d)$-codes}\label{sec:code}
\subsection{Counting $t$ error correcting codes: Sapozhenko's question}
\label{errorcodescountingsubsection}
The \emph{Hamming distance} $d(A,B)$ between two sets $A,B\subseteq [n]$ is defined as 
$$d(A,B):=|A\backslash B|+|B\backslash A|.$$ 
In this section we will view a subset $A$ of $[n]$ as a string of length $n$ over the alphabet $\{0,1\}$ where the $i$th entry of the string is $1$ precisely when $i \in A$. 
In this setting, the Hamming distance between $A$ and $B$ can be rewritten as $$d(A,B)=|\{1\leq i \leq n : A_i\ne B_i\}|,$$ where for example, $A_i$ denotes the $i$th term in the string $A$.
The \emph{Hamming ball} of radius $r$ around a set $A\subseteq [n]$ is defined as 
$$B(A,r):=\{X\subseteq [n] : d(A,X)\leq r\}.$$
Given a family $\mathcal{C}\subset \mathcal{P}(n)$, we say $\mathcal{C}$ is a \emph{distance $d$ code} if the Hamming distance between any two distinct members of $\mathcal{C}$ is at least $d$. Moreover $\mathcal{C}$ is said to be a \emph{$t$ error correcting code} if there exists a \emph{decoding function} $\text{Dec}:\{0,1\}^n\to\mathcal{C}$ such that for every $X\in\{0,1\}^n$ and $A\in\mathcal{C}$ with $d(X,A)\leq t$ we have $\text{Dec}(X)=A$. Recall that $\mathcal{C}$ is $t$ error correcting if and only if for every pair $A,B\in\mathcal{C}$ we have $d(A,B)\geq 2t+1$. So $t$ error correcting codes are precisely distance $2t+1$ codes, i.e. codes where the Hamming balls $\{B(A,t):A\in\mathcal{C}\}$ are disjoint.  Since most communication channels are subject to channel noise, which can cause errors in the transmission of messages, the additional redundancy given by error correcting codes plays a crucial role in ensuring that the receiver can recover the original message. Given the widespread usage of such codes in digital communications, natural question to ask is  \emph{how many}  $t$ error correcting codes there are in total, i.e. estimate the size of $$|\{\mathcal{C}\subseteq \mathcal{P}(n) : \mathcal{C} \text{ is } t \text{ error correcting}\}|.$$

This problem was first raised by Sapozhenko \cite{saponew}.
We wish to bound the number of $t$ error correcting codes of length $n$ and alphabet $\{0,1\}$. An upper bound for the size of such a code is given by the Hamming bound, which gives an important limitation on the efficiency of error correcting codes. Let $V(n,t)$ be the volume of a Hamming ball of radius $t$ in $[n]$, so $V(n,t)=\sum_{k=0}^t \binom{n}{k}$. Then the Hamming bound states that if $\mathcal{C}$ is $t$ error correcting,  since the Hamming balls of radius $t$ centered at the members of $\mathcal{C}$ have to be disjoint, we have $$|\mathcal{C}|\leq \frac{2^n}{V(n,t)}.$$

If $\mathcal{C}$ attains equality in the Hamming bound, we say $\mathcal{C}$ is a \emph{perfect code}. Perfect codes are precisely those for which the Hamming balls centered at the codewords fill up the entire space $\{0,1\}^n$ without overlap. The trivial perfect codes are codes consisting of a single codeword (when $t=n$), or the whole of $\{0,1\}^n$ (when $t=0$), and \emph{repetition codes} where the same substring is repeated an odd number of times. The non-trivial perfect codes over prime-power alphabets must have the same parameters as the so-called \emph{Hamming codes} or the \emph{Golay codes} (see \cite{nontrivialperfectcodes}).

Let $H(n,t):=\frac{2^n}{V(n,t)}$. Since every subset of a $t$ error correcting code is also $t$ error correcting,  if $\mathcal{C}$ is $t$ error correcting then the number of $t$ error correcting codes is at least $2^{|\mathcal{C}|}$. In particular, if the parameters $n,t$ are such that a perfect code exists, then the number of $t$ error correcting codes is at least $2^{H(n,t)}$. Our first goal is to prove a corresponding upper bound:
\begin{theorem}\label{numberoferrorcodes}
Let $t=t(n)\ll\sqrt[3]{\frac{ n}{\log^2 n}}$. Then the number of $t$ error correcting codes is at most $2^{H(n,t)(1+o(1))}$. 
\end{theorem}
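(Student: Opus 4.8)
## Proof proposal for Theorem 1.2

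\textbf{Overview of the strategy.} The plan is to apply the graph container method in the form set out in Section~\ref{sec:over}, using a two-stage version of the Kleitman--Winston algorithm. Define the auxiliary graph $G$ on vertex set $\mathcal P(n)$ in which distinct sets $A$, $B$ are adjacent precisely when $d(A,B) \leq 2t$; then $t$ error correcting codes are exactly the independent sets of $G$. The number of vertices of $G$ is $2^n$ and $G$ is regular with degree $\deg_G(A) = V(n,2t) - 1$ for every $A$, so $G$ is vertex-transitive. We want to show the number of independent sets in $G$ is $2^{H(n,t)(1+o(1))}$; the lower bound comes from the existence (under the right parameters) of a perfect code, as noted before the statement, so only the upper bound needs proof. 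It suffices to produce a container family $\mathcal F$ with $|\mathcal F| = 2^{o(H(n,t))}$ such that every independent set is contained in some $F \in \mathcal F$ and $|F| \leq H(n,t)(1+o(1))$.

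\textbf{The supersaturation input.} The single most important ingredient — and the step I expect to be the main obstacle — is a supersaturation statement: if $\mathcal C \subseteq \mathcal P(n)$ has $|\mathcal C| \geq (1+\eps)H(n,t)$, then $\mathcal C$ spans many pairs at Hamming distance $\leq 2t$ (i.e. many edges of $G$). The natural way to get this is a double-counting / averaging argument over Hamming balls. For a set $X \in \mathcal P(n)$, the ball $B(X,t)$ has size $V(n,t) = 2^n/H(n,t)$, and $\sum_{X} |\mathcal C \cap B(X,t)| = |\mathcal C|\, V(n,t) \geq (1+\eps) 2^n$, so on average $|\mathcal C \cap B(X,t)| \geq 1+\eps$; hence $\sum_X \binom{|\mathcal C \cap B(X,t)|}{2} \geq \eps 2^{n}/2$ by convexity (this is exactly the argument used in Lemma~\ref{supertilt}). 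Each such pair $A, B \in \mathcal C \cap B(X,t)$ has $d(A,B) \leq 2t$, so is an edge of $G$. The subtlety is bounding how many centres $X$ can witness a fixed edge $\{A,B\}$: if $d(A,B) = \ell \leq 2t$, the number of $X$ with $A, B \in B(X,t)$ is at most (roughly) $2^{n-\ell}\binom{\ell}{\ell/2}$, which is at most $2^n/2^{\ell}\cdot 2^{\ell} = 2^n$ crudely, but one needs a bound of the shape $o(2^n/(n\log n))$-type relative to the count above to conclude $e(G[\mathcal C]) \geq \eps 2^n /(\text{poly})$. This is where the hypothesis $t \ll (n/\log^2 n)^{1/3}$ enters: it controls $V(n,2t)$ (and the number of witnessing centres) against $2^n$ with room to spare. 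I would prove: there exist $\delta > 0$, $n_0$ such that for $n \geq n_0$, any $\mathcal C$ with $|\mathcal C| \geq (1+\eps)H(n,t)$ has $e(G[\mathcal C]) \geq \delta\, 2^n / ?$ — with the denominator chosen so that it still dominates $\Delta(G)\cdot 2^n$ for the parameter we will pick below.

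\textbf{Running the algorithm, possibly in two stages.} With the supersaturation lemma in hand, run the graph container algorithm on $G$ with parameter $\Delta$. The output function is defined on $\binom{V(G)}{\leq 2^n/\Delta}$, so $|\mathcal F| \leq \binom{2^n}{\leq 2^n/\Delta} \leq 2^{(2^n/\Delta)\log(e\Delta)}$ by Fact~\ref{fact3}; for this to be $2^{o(H(n,t))} = 2^{o(V(n,t)^{-1}2^n)}$ we need $\log(e\Delta)/\Delta \cdot V(n,t) = o(1)$, i.e. $\Delta$ somewhat larger than $V(n,t)\log V(n,t)$, which is roughly $n^{t}$-scale. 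But at the end of the algorithm we only know $\Delta(G_i) < \Delta$, and a graph on $2^n$ vertices with maximum degree $\Delta \approx n^{t}\log$ can still have up to $\sim \Delta \cdot 2^n$ edges, which with our supersaturation bound only forces $|V(G_i)| \leq \text{(something)}\cdot H(n,t)$ rather than $(1+o(1))H(n,t)$. This is precisely the situation flagged in Section~\ref{sec:over}: one first runs with a large parameter $\Delta'$ (chosen to make $|\mathcal F|$ small) to get $|G_i| \leq C\, H(n,t)$ for some constant, and then continues the algorithm on the much smaller remaining graph with a tiny parameter $\Delta \ll V(n,t)$ — small enough that $\Delta \cdot |G_i| = \Delta \cdot C H(n,t)$ is below the supersaturation threshold for edges in a set of size $(1+\eps)H(n,t)$, hence forcing $|f(S)| \leq (1+\eps)H(n,t)$ — while the extra cost to $|\mathcal F|$ is only $\binom{C H(n,t)}{\leq C H(n,t)/\Delta}$, which is $2^{o(H(n,t))}$ provided $\Delta \to \infty$ (even $\Delta = \log n$ suffices). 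I would present this as a single algorithm with one parameter $\Delta$, splitting the analysis of $G_i$ into the phase before it first drops below $C\,H(n,t)$ vertices and the phase after, as the excerpt indicates is done in this section. Finally set $\mathcal F = \{S \cup f(S)\}$; properties (i)–(iii) hold as above, and since $|S| \leq 2^n/\Delta = o(H(n,t))$ we get $|S \cup f(S)| \leq (1+2\eps)H(n,t)$, and letting $\eps \to 0$ gives the number of $t$ error correcting codes is at most $2^{H(n,t)(1+o(1))}$, completing the proof.
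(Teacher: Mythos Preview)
Your overall strategy matches the paper's: define the Hamming graph $G$, prove supersaturation by double-counting over radius-$t$ Hamming balls, run the Kleitman--Winston algorithm with a single parameter, and split the analysis according to whether $|V(G_i)|$ has yet dropped below $2H(n,t)$. The paper packages the two phases as two separate supersaturation lemmas (Lemma~\ref{supersatforhammingcodes}: if $|\mathcal C|=H(n,t)+x$ then there are at least $xn/(2t)$ bad pairs; Lemma~\ref{generalcodeserrorsupersat}: if $|\mathcal C|\geq 2H(n,t)$ then some vertex has degree at least $\alpha|\mathcal C|$ in $G[\mathcal C]$, with $\alpha=n/(10tH(n,t))$), both proved by exactly the ball double-count you describe.

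The genuine gap is your overcount estimate. The number of centres $X$ with $A,B\in B(X,t)$ when $d(A,B)=\ell$ is not ``roughly $2^{n-\ell}\binom{\ell}{\ell/2}$''; that quantity is of order $2^n/\sqrt{\ell}$, which is useless here. What you actually need is that this overcount $W(t,\ell)$ is maximised at $\ell=1$, where $W(t,1)=2V(n-1,t-1)\leq (2t/n)V(n,t)$. The resulting ratio $V(n,t)/W(t,1)\geq n/(2t)$ is the single quantitative fact that drives both supersaturation statements, and your sketch never reaches it; your paragraph ends with a literal question mark where this number should go.

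With that ratio in hand the paper runs the algorithm with parameter $\Delta=\eps n/(4t)$. In phase one each step removes an $\alpha$-fraction of the vertices, so $|S_1|\lesssim \alpha^{-1}\log V(n,t)\approx (tH(n,t)/n)\cdot t\log n$; in phase two $|S_2|\leq 2H(n,t)/\Delta$. For the container count one needs $|S|\leq \eps H(n,t)/(t\log n)$ (since $\log(2^n/|S|)\approx \log V(n,t)\approx t\log n$), and the bound on $|S_1|$ is exactly where the hypothesis $t^3\log^2 n=o(n)$ is used. Your proposal asserts that the hypothesis ``enters'' at the overcount step, but in fact it enters in bounding $|S_1|$; once you have the correct ratio $n/(2t)$ the supersaturation itself requires nothing of $t$.
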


The range of $t$ given in Theorem \ref{numberoferrorcodes} is probably not optimal - indeed our guess is that the conclusion of Theorem \ref{numberoferrorcodes} should hold whenever $t\ll\frac{n}{\log n}$. However, a  heuristic argument suggests that if $t\gg\frac{n}{\log n}$ the conclusion of Theorem \ref{numberoferrorcodes} may fail. Indeed, suppose one could partition $\{0,1\}^n$ into disjoint copies of balls of radius $t+1$, obtaining roughly $\frac{t}{n}H(n,t)$ balls. From each ball we can pick one element, that is either the centre of the ball or an element at distance one from the centre, giving $n+1$ choices for each ball. Every family we obtain like this is a $t$ error correcting code, and we have roughly $n^{\frac{t}{n}H(n,t)}\gg 2^{H(n,t)}$ such families.

Our overarching proof strategy is similar to the one used in the previous section. However, now we will employ a two phase strategy to construct our containers and as such we require two different supersaturation results. 
The first states that if $|\mathcal{C}|$ is slightly bigger than $H(n,t)$ then it contains many \emph{bad pairs}, i.e. pairs at distance less than $2t+1$. 
%Recall the notation $V(n,t)=\sum_{k=0}^t \binom{n}{k}$ and $H(n,t):=\frac{2^n}{V(n,t)}$. 
Let $W(t,d)$ be the size of the intersection of two Hamming balls of radius $t$ in $n$, the centers being distance $d$ apart. So $W(t,1)\geq W(t,d)$ for all $d\geq 2$, and $W(t,1) = 2V(n-1,t-1)$. 
 The key observation is that the volume of the intersection of two balls is significantly smaller than the volume of a single ball.
%\emph{Proof idea:} The observation that will make this, and the next supersaturation result work is that if $t=o(n)$ then $W(t,1)=o(V(n,t))$. Since the size of $\mathcal{C}$ is bigger than $H(n,t)$, we cannot fit $|\mathcal{C}|$ disjoint balls of radius $t$ into $\mathcal{P}(n)$. This 'excess volume' has to be taken care of by pairwise overlaps of these balls. However, the radius $t$ balls around any bad pair produce only a very small overlap since $W(t,d)=o(V(n,t))$ for all $d\geq 1$. A simple double counting will then show that there are many bad pairs.

\begin{lemma}\label{supersatforhammingcodes}
Let $\mathcal{C}\subset \mathcal{P}(n)$. If $|\mathcal{C}|\geq H(n,t)+x$ then there are at least $x\frac{n}{2t}$ pairs $A,B\in\mathcal{C}$ that have Hamming distance at most $2t$.
\end{lemma}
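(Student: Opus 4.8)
The statement is a supersaturation result for $t$ error correcting codes (equivalently, distance $2t+1$ codes): if $\mathcal{C}$ exceeds the Hamming bound $H(n,t) = 2^n/V(n,t)$ by $x$, then $\mathcal{C}$ contains at least $x\frac{n}{2t}$ bad pairs (pairs at Hamming distance $\le 2t$). The natural approach is a ``deletion'' or ``covering'' argument: keep removing vertices from $\mathcal{C}$ until what remains is a genuine distance $2t+1$ code, and track how many bad pairs each deletion destroys.

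\medskip

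\textbf{Step 1: Bound the bad degree of any vertex.} First I would show that each set $A \in \mathcal{P}(n)$ has at most $\frac{2t}{n}V(n,t) = \frac{2t}{n}\cdot\frac{2^n}{H(n,t)}$ sets $B$ at Hamming distance at most $2t$ from it, after a suitable normalization — actually the clean claim is: the number of $B$ with $d(A,B)\le 2t$ is $V(n,2t)$, and what matters is comparing $V(n,2t)$ to $V(n,t)$. The key quantitative input, foreshadowed in the excerpt's remark that ``the volume of the intersection of two balls is significantly smaller than the volume of a single ball,'' is a bound of the form $V(n,2t) \le \frac{2t}{n}\cdot(\text{something})$, or more precisely a bound relating the ``bad neighbourhood'' size to $V(n,t)$ so that the count comes out to $\frac{n}{2t}$. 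I expect the honest statement to be: any set $A$ is within distance $2t$ of at most $\frac{2t}{n}\cdot 2^n / H(n,t) \cdot (1+o(1))$... — I need to get the constant right, and this binomial estimate is the main technical obstacle.

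\medskip

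\textbf{Step 2: Iterative deletion.} Order $\mathcal{C}$ arbitrarily. Repeatedly: if the current family is not a distance $2t+1$ code, pick a set $A$ involved in some bad pair and delete it; this destroys at least one bad pair, and at most $D$ bad pairs where $D$ is the maximum bad-degree bound from Step 1. Continue until no bad pair remains. The remaining family $\mathcal{C}'$ is a distance $2t+1$ code, hence by the Hamming bound $|\mathcal{C}'| \le H(n,t)$. Therefore we deleted at least $|\mathcal{C}| - H(n,t) \ge x$ vertices, and since each deleted vertex was in at least one bad pair but each bad pair is destroyed at most once per endpoint, the total number of bad pairs is at least (number of deletions) — but that only gives $x$ bad pairs, not $x\frac{n}{2t}$.

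\medskip

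\textbf{Correcting the count — the right argument.} To get the factor $\frac{n}{2t}$, the deletion bookkeeping must go the other way: when we delete $A$, we should \emph{charge} to it \emph{all} the bad pairs it currently lies in, and argue that we perform at least $x$ deletions, but that is still only $\ge x$ pairs. The genuinely correct route is: every $B \in \mathcal{C}$ lies in few bad pairs — at most $\frac{2t}{n} H(n,t)^{-1} 2^n = \frac{2t}{n}V(n,t)$... no. Let me restate cleanly: the plan is to average. Consider any maximal distance-$(2t{+}1)$ subcode $\mathcal{C}^* \subseteq \mathcal{C}$; then every $A \in \mathcal{C}\setminus\mathcal{C}^*$ is within distance $2t$ of some element of $\mathcal{C}^*$. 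Cover $\mathcal{P}(n)$ by the balls $B(A, t)$, $A \in \mathcal{C}^*$; these are disjoint and their union has size $|\mathcal{C}^*| V(n,t) \le 2^n$. Then I count incidences between $\mathcal{C}\setminus\mathcal{C}^*$ and these balls: each set in $\mathcal{C}\setminus\mathcal{C}^*$, being within $2t$ of some center, contributes bad pairs, and the pigeonhole over the $\le 2^n/V(n,t) = H(n,t)$ balls shows... Honestly, the cleanest framing: let $r = |\mathcal{C}| - H(n,t) \ge x$ be the ``excess''; the number of bad pairs is minimized by a configuration that is a near-perfect code plus a few extra points, and each extra point, placed within a ball of radius $t$ already ``occupied'' by a codeword, creates bad pairs with every codeword whose radius-$t$ ball it meets — and the number of radius-$t$ balls in a tiling that a single point can avoid being the unique occupant of is governed by the ratio $V(n,t)/V(n,2t) \approx n/(2t)$ for small $t$. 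Extracting the bound $x\frac{n}{2t}$ rigorously from this — i.e., showing that forcing $x$ excess points into a system of $H(n,t)$ disjoint radius-$t$ balls creates $\ge x \cdot \frac{n}{2t}$ collisions — is the heart of the proof and where I expect to spend the most effort; the binomial inequality $\binom{n}{\le t} \cdot \frac{n}{2t} \le \binom{n}{\le 2t}$ (valid in the stated range $t \ll (n/\log^2 n)^{1/3}$) should be the enabling lemma, proved by comparing consecutive partial sums of binomial coefficients.
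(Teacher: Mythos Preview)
Your proposal has a genuine gap. You correctly observe that the deletion argument in Step~2 gives only $\ge x$ bad pairs, not $x\cdot\frac{n}{2t}$, and your subsequent attempts to recover the missing factor do not land on a working argument. The maximal-subcode-plus-pigeonhole idea only shows that each of the $\ge x$ excess sets is within distance $2t$ of \emph{some} element of $\mathcal C^*$, which again gives $\ge x$ pairs and no more. The quantity $V(n,t)/V(n,2t)$ you focus on is not the right one, and the proposed inequality $\binom{n}{\le t}\cdot\frac{n}{2t}\le\binom{n}{\le 2t}$, even if true, does not by itself convert an excess of $x$ into $x\cdot\frac{n}{2t}$ bad pairs: it bounds how many sets lie near a given point, not how many \emph{codewords} near it must be close to each other.

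The paper's proof is a short double-counting argument that you have not found. For each $X\in\{0,1\}^n$ set $K_X:=|\{A\in\mathcal C: d(A,X)\le t\}|$; then $\sum_X K_X=|\mathcal C|\,V(n,t)\ge 2^n+xV(n,t)$. Whenever $K_X\ge 2$, any two sets counted there form a bad pair, and a given bad pair $(A,B)$ is witnessed by at most $W(t,1)=2V(n-1,t-1)$ points $X$ (the maximum volume of the intersection of two radius-$t$ balls with distinct centres). Hence
\[
\#\{\text{bad pairs}\}\ \ge\ \frac{1}{W(t,1)}\sum_X\binom{K_X}{2}\ \ge\ \frac{1}{W(t,1)}\sum_X(K_X-1)\ \ge\ \frac{xV(n,t)}{2V(n-1,t-1)}\ \ge\ x\cdot\frac{n}{2t}.
\]
The crucial quantity is therefore the \emph{intersection volume} $W(t,1)$ of two nearby balls---exactly the observation flagged before the lemma---and the elementary bound $V(n,t)/V(n-1,t-1)\ge n/t$, not any comparison of $V(n,t)$ with $V(n,2t)$.
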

\begin{proof}
For $X\in \{0,1\}^n$, let $K_X:=\{A\in\mathcal{C}:d(A,X)\leq t\}$. For $k\in\mathbb N$ set $S_k:=\{X\in \{0,1\}^n : |K_X|=k \}$. Then $\sum_k k|S_k|=|\mathcal{C}|V(n,t)\geq 2^n + x V(n,t)$. So the number of pairs in $\mathcal{C}$ of distance at most $2t$ is at least $$ \frac{1}{W(t,1)}\sum_k |S_k|\binom{k}{2}\geq \frac{1}{W(t,1)}\sum_k |S_k|(k-1) \geq x\cdot\frac{V(n,t)}{W(t,1)}= x\cdot \frac{V(n,t)}{2V(n-1,t-1)}\geq x \frac{n}{2t}.$$
\end{proof}
Our next supersaturation lemma considers sets of size at least $2H(n,t)$. Consider the graph $G$ with $V(G)=\mathcal{P}(n)$, where two distinct vertices $A,B$ are connected by an edge of colour $d(A,B)$ if they form a bad pair, i.e. their Hamming distance is at most $2t$. Define $$\alpha :=\frac{n}{10tH(n,t)}.$$

\begin{lemma}\label{generalcodeserrorsupersat}
Let $\mathcal{C}\subset \mathcal{P}(n)$. If $|\mathcal{C}|\geq 2H(n,t)$, then there is an $A\in\mathcal{C}$ such that its degree in $G[\mathcal{C}]$ is at least $\alpha |\mathcal{C}|$.
\end{lemma}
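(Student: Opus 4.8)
The plan is to find a single codeword $A\in\mathcal C$ whose radius-$t$ Hamming ball $B(A,t)$ is heavily covered by $\mathcal C$, and then to convert this into a lower bound on the degree of $A$ in $G[\mathcal C]$. Throughout, as in the proof of Lemma~\ref{supersatforhammingcodes}, write $K_X:=\{A\in\mathcal C:d(A,X)\le t\}$ for $X\in\{0,1\}^n$.

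First I would double count the quantity $\sum_{A\in\mathcal C}\sum_{X\in B(A,t)}|K_X|$. Since $X\in B(A,t)$ is equivalent to $A\in B(X,t)$, for each fixed $X$ exactly $|K_X|$ codewords $A\in\mathcal C$ contribute, so the sum equals $\sum_{X\in\{0,1\}^n}|K_X|^2$. By Cauchy--Schwarz this is at least $2^{-n}\bigl(\sum_X|K_X|\bigr)^2$, and $\sum_X|K_X|=|\mathcal C|\,V(n,t)$ since every codeword lies in exactly $V(n,t)$ of the balls $B(X,t)$ (this identity already appears in the proof of Lemma~\ref{supersatforhammingcodes}). Averaging over $A\in\mathcal C$, there is some $A\in\mathcal C$ with
$$\sum_{X\in B(A,t)}|K_X|\ \ge\ \frac{|\mathcal C|\,V(n,t)^2}{2^n}\ =\ \frac{|\mathcal C|}{H(n,t)}\,V(n,t).$$

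Next I would bound this same sum from above in terms of $\deg_{G[\mathcal C]}(A)$, by regrouping it as $\sum_{B\in\mathcal C}\bigl|B(A,t)\cap B(B,t)\bigr|$. The term $B=A$ contributes $V(n,t)$; for $B\neq A$ the term is nonzero only when $d(A,B)\le 2t$, i.e.\ when $B$ is a neighbour of $A$ in $G[\mathcal C]$, in which case it equals $W(t,d(A,B))\le W(t,1)$. Hence the sum is at most $V(n,t)+\deg_{G[\mathcal C]}(A)\,W(t,1)$. Combining the two bounds, and using $W(t,1)=2V(n-1,t-1)$ together with $V(n,t)\ge\frac nt\,V(n-1,t-1)$ (which follows by summing $\binom nk=\frac nk\binom{n-1}{k-1}\ge\frac nt\binom{n-1}{k-1}$ over $1\le k\le t$, exactly as in Lemma~\ref{supersatforhammingcodes}), we get
$$\deg_{G[\mathcal C]}(A)\ \ge\ \frac{V(n,t)}{W(t,1)}\left(\frac{|\mathcal C|}{H(n,t)}-1\right)\ \ge\ \frac{n}{2t}\left(\frac{|\mathcal C|}{H(n,t)}-1\right).$$
Since $|\mathcal C|\ge 2H(n,t)$ we have $\frac{|\mathcal C|}{H(n,t)}-1\ge\frac{|\mathcal C|}{2H(n,t)}$, so $\deg_{G[\mathcal C]}(A)\ge\frac{n\,|\mathcal C|}{4tH(n,t)}\ge\alpha|\mathcal C|$, which is the assertion.

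I do not expect a real obstacle here: the argument is a Cauchy--Schwarz ``popular ball'' estimate, and the only inputs beyond bookkeeping are the fact $W(t,d)\le W(t,1)$ for all $d\ge1$ (so that overlaps of distinct balls are uniformly controlled) and the elementary inequality $V(n,t)/W(t,1)\ge n/(2t)$ already used in Lemma~\ref{supersatforhammingcodes}. The one place that needs slight care is the loss of the diagonal term $V(n,t)$ (the ``$-1$'' above); this is absorbed precisely because the hypothesis is $|\mathcal C|\ge 2H(n,t)$ rather than just $|\mathcal C|>H(n,t)$, and the constant $10$ in the definition of $\alpha$ leaves ample slack.
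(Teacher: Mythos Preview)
Your proof is correct and uses essentially the same ingredients as the paper: the double count via the covering multiplicities $|K_X|$, a convexity/Cauchy--Schwarz step, and the bound $W(t,d)\le W(t,1)\le \tfrac{2t}{n}V(n,t)$. The only difference is organisational: the paper first lower-bounds the total number of edges $E$ in $G[\mathcal C]$ (via $\sum_X\binom{|K_X|}{2}$ and Jensen) and then infers a vertex of degree $\ge 2E/|\mathcal C|$, whereas you average the quantity $\sum_{X\in B(A,t)}|K_X|$ over $A$ to locate a ``popular'' $A$ directly and then bound its degree. Both routes give $\deg_{G[\mathcal C]}(A)\ge \tfrac{n}{4t}\cdot\tfrac{|\mathcal C|}{H(n,t)}\ge\alpha|\mathcal C|$, with the constant $10$ in $\alpha$ providing slack in either version.
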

\begin{proof}
Let $E_i$ denote the number of pairs of vertices connected by an edge of colour $i$ in $G[\mathcal{C}]$ for all $i=1,\dots ,2t$, and let $E:=\sum _i E_i$.
Define $K_X$  as in the proof of Lemma~\ref{supersatforhammingcodes}. Note that
\begin{align}\label{eq1}
\sum_{k=1}^{2t}W(t,k)E_k = \sum_{X\in\{0,1\}^n}\binom{|K_X|}{2}
\end{align}
since both terms count the number of pairs $(X,(A,B))$ where $X \in \{0,1\}^n$, $A,B \in \mathcal C$ and $d(X,A),$ $ d(X,B) \leq t$.
The average value of $K_X$ over all $X \in \{0,1\}^n$ is $|\mathcal C|V(n,t)/2^n$. Thus,
\begin{align}\label{eq2}
 \sum_{X\in\{0,1\}^n}\binom{|K_X|}{2} \geq 2^n \binom{|\mathcal{C}|V(n,t)/2^n}{2}.
\end{align}
Combining (\ref{eq1}) and (\ref{eq2}), 
 since $|\mathcal{C}|V(n,t)/2^n\geq 2$, we have that  $$\sum_{k=1}^{2t}W(t,k)E_k\geq \frac{|\mathcal{C}|^2 V(n,t)^2}{10\cdot 2^n}.$$
As $W(t,k)\leq W(t,1)= 2V(n-1,t-1)\leq \frac{2t}{n}V(n,t)$, we have that $$E\geq\frac{|\mathcal{C}|^2 V(n,t)n}{20t  2^n}$$ and the result follows.
\end{proof}

Given these two supersaturation results, we are now ready to prove the following container lemma which immediately implies Theorem \ref{numberoferrorcodes}.

\begin{lemma}\label{hammingcont} Let $t=t(n)\ll \sqrt[3]{\frac{ n}{\log^2 n}}$. There is a collection $\mathcal F \subseteq \P(n)$ with the following properties:
\begin{itemize}
\item[(i)]  $|\mathcal F|=2^{o(H(n,t))}$;
\item[(ii)] If $\mathcal C \subseteq \P(n)$ is a $t$ error correcting code, then $\mathcal C$ is contained in some member of $\mathcal F$;
\item[(iii)] $|F|\leq (1+o(1))H(n,t)$ for every $F \in \mathcal F$.
\end{itemize}
\end{lemma}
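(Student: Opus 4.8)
The plan is to run the Kleitman--Winston graph container algorithm on the auxiliary graph $G$ (vertex set $\P(n)$, with $A\sim B$ iff $d(A,B)\le 2t$, so that $t$ error correcting codes are exactly independent sets in $G$), but in \emph{two phases} with two different parameters, feeding in the two supersaturation results Lemma~\ref{generalcodeserrorsupersat} and Lemma~\ref{supersatforhammingcodes} respectively. First I would fix $\eps>0$ and run the algorithm with a relatively large parameter $\Delta':=\alpha\cdot 2H(n,t)=\tfrac{n}{5t}$. As long as the current graph $G_{i-1}$ has more than $2H(n,t)$ vertices, Lemma~\ref{generalcodeserrorsupersat} guarantees a vertex of degree at least $\alpha|V(G_{i-1})|\ge \alpha\cdot 2H(n,t)=\Delta'$, so the algorithm does not terminate via branch (d); every step either deletes a non-codeword or deletes a codeword together with its $\ge\Delta'$ neighbours. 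Since $G$ has $2^n$ vertices, the number of codeword-deletion steps in Phase~1 is at most $2^n/\Delta' = \tfrac{5t\cdot 2^n}{n}= 5t\cdot H(n,t)/1 \cdots$ — more precisely at most $2^n/\Delta'$, and this phase ends with a graph on at most $2H(n,t)$ vertices.

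Then I would continue the algorithm (Phase~2) with the much smaller parameter $\Delta:=\eps H(n,t)\cdot\tfrac{2t}{n}$, i.e.\ the parameter for which Lemma~\ref{supersatforhammingcodes} becomes useful. Each codeword-deletion step in Phase~2 removes at least $\Delta$ neighbours from a graph of size $\le 2H(n,t)$, so there are at most $2H(n,t)/\Delta = \tfrac{n}{\eps t}$ such steps; the phase terminates (branch (d)) with a graph $G_\ell$ of maximum degree $<\Delta$, hence with $e(G_\ell)<\Delta|V(G_\ell)|$. Now Lemma~\ref{supersatforhammingcodes} in contrapositive form: if $|V(G_\ell)|\ge H(n,t)+x$ then $G_\ell$ has at least $x\tfrac{n}{2t}$ edges (note that the bad pairs counted there are exactly the edges of $G$ restricted to the vertex set), so $x\tfrac{n}{2t}\le e(G_\ell)<\Delta|V(G_\ell)|\le \Delta\cdot 2H(n,t)=2\eps H(n,t)^2\cdot\tfrac{2t}{n}$, giving $x\le 4\eps H(n,t)^2\cdot\tfrac{t^2}{n^2}$; choosing $\Delta$ a hair smaller (e.g.\ $\Delta=\eps n/(2t)$, so $x\tfrac n{2t}\le \Delta\cdot 2H(n,t)$ yields $x\le 2\eps H(n,t)$) one gets $|V(G_\ell)|\le (1+2\eps)H(n,t)$. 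The set $S$ of all codewords removed across both phases has $|S|\le 2^n/\Delta' + 2H(n,t)/\Delta$, and the function $f(S):=V(G_\ell)$ is well defined because at every step $u$ was the maximum-degree vertex of $G_{i-1}$ lying in $I$, so the algorithm's trajectory depends only on $S$, not on $I$. Setting $\F:=\{S\cup f(S): S\in\binom{V(G)}{\le 2^n/\Delta'+2H(n,t)/\Delta}\}$ gives (ii) directly ($I\subseteq S\cup f(S)$), and (iii) since $|S\cup f(S)|\le |S|+|V(G_\ell)| \le (1+2\eps)H(n,t)+o(H(n,t))$.

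For (i) I need $|\F|=2^{o(H(n,t))}$, i.e.\ $\log_2 |\F|\le r\log_2(2^n)=rn$ where $r:=2^n/\Delta'+2H(n,t)/\Delta$ is the bound on $|S|$, should be $o(H(n,t))$. Here $2^n/\Delta'=\tfrac{5t\cdot 2^n}{n}$ and $H(n,t)=2^n/V(n,t)$, so $\tfrac{2^n/\Delta'}{H(n,t)}=\tfrac{5t\,V(n,t)}{n}$; since $V(n,t)\le (t+1)\binom{n}{t}\le (en/t)^t\cdot(t+1)$ we get $\log_2(2^n/\Delta')\lesssim t\log(n/t)$, and $r n\lesssim t n\log n$ roughly, which must be $o(H(n,t))=o(2^n/V(n,t))$. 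Similarly $2H(n,t)/\Delta=\tfrac{4t}{\eps n}\cdot H(n,t)=o(H(n,t))$ is automatic as long as $t=o(n)$. The genuine constraint — and I expect this to be the main technical obstacle, and the reason for the hypothesis $t\ll\sqrt[3]{n/\log^2 n}$ — is checking $\big(\tfrac{2^n}{\Delta'}+\tfrac{2H(n,t)}{\Delta}\big)\cdot n = o(H(n,t))$, i.e.\ verifying $t\cdot V(n,t)\cdot n\log n = o(2^n)$, equivalently $V(n,t)=2^{n-\omega(tn\log n /\,??)}$. Unwinding via $V(n,t)\le 2^{n H_2(t/n)}$-type estimates (or just $V(n,t)\le (t+1)\binom nt$) and the entropy bound, one finds the requirement is essentially $t^2\log^2 n\ll n$ up to the precise constants, matching (up to the cube-root rather than square-root that a cruder bound would give) the stated range; I would carry out this binomial/entropy estimate carefully using Facts~\ref{fact1}--\ref{fact3}, tracking that it is exactly the competition between the $n$-bit cost per element of $S$ and the gain $H(n,t)=2^n/V(n,t)$ that forces the bound on $t$. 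Finally, letting $\eps\to 0$ slowly with $n$ (replacing the fixed $\eps$ by $\eps(n)\to0$) converts the "$(1+2\eps)$" and "$2^{O(\eps H(n,t))}$" into the desired $(1+o(1))$ and $2^{o(H(n,t))}$, completing the proof of the lemma and hence of Theorem~\ref{numberoferrorcodes}.
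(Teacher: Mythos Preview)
Your two-phase plan using Lemmas~\ref{generalcodeserrorsupersat} and~\ref{supersatforhammingcodes} matches the paper's approach, and your Phase~2 analysis (parameter $\Delta\approx \eps n/t$, giving $|V(G_\ell)|\le(1+O(\eps))H(n,t)$ and $|S_2|\le O(tH(n,t)/(\eps n))$) is essentially the paper's. The genuine gap is your Phase~1 bound on $|S_1|$.

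You invoke Lemma~\ref{generalcodeserrorsupersat} only to say that the maximum degree is at least $\alpha|V(G_{i-1})|\ge \alpha\cdot 2H(n,t)=:\Delta'$, and then you bound the number of codeword-deletion steps by $2^n/\Delta'=5t\cdot 2^n/n$. But this number is enormous: since $H(n,t)=2^n/V(n,t)$, you have
\[
\frac{2^n/\Delta'}{H(n,t)}=\frac{5t\,V(n,t)}{n},
\]
and already for $t=1$ this ratio is $5(n+1)/n>5$. So your certificate set $S$ is \emph{larger} than $H(n,t)$, and $\binom{2^n}{\le |S|}$ cannot possibly be $2^{o(H(n,t))}$; the calculation you sketch at the end (``$t\cdot V(n,t)\cdot n\log n=o(2^n)$'') is not the right inequality and does not salvage this.

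The point of Lemma~\ref{generalcodeserrorsupersat} is precisely the \emph{proportional} bound $\alpha|V(G_{i-1})|$, not the constant lower bound $\Delta'$. Each Phase~1 step shrinks $|V(G_{i-1})|$ by a factor $(1-\alpha)$, so the number of Phase~1 steps is at most
\[
|S_1|\le \frac{\log\bigl(2^n/2H(n,t)\bigr)}{\log\bigl(1/(1-\alpha)\bigr)}\ \lesssim\ \frac{\log V(n,t)}{\alpha}\ \lesssim\ \frac{t\log(n/t)}{n/(tH(n,t))}\ =\ \frac{t^2\log n}{n}\,H(n,t).
\]
This logarithmic (rather than linear) bound is what makes the whole argument work, and it is exactly here that the hypothesis $t\ll\sqrt[3]{n/\log^2 n}$ enters: one needs $\tfrac{t^2\log n}{n}\ll \tfrac{1}{t\log n}$ so that $|S_1|\log(2^n/|S_1|)=o(H(n,t))$. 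With this corrected Phase~1 count your proof goes through and coincides with the paper's.
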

\proof
Let $0< \eps <1$ and let $n$ be sufficiently large. Let $G$ be the graph with vertex set $\P(n)$ in which distinct sets $A$ and $B$ are adjacent if and only if their Hamming distance is at most $2t$. Thus a $t$ error correcting code in $\P(n)$ is precisely an independent set in $G$. 

\begin{claim}\label{claimc} 
There exists a function $f : \binom{V(G)}{\leq \eps\frac{H(n,t)}{t\log n}} \to \binom{V(G)}{\leq (1+\eps)H(n,t)}$  such that, for any independent set $I$ in $G$, there is a subset $S \subseteq I$ where 
$S \in \binom{V(G)}{\leq \eps\frac{H(n,t)}{t\log n}} $
 and   $I \subseteq S \cup f(S)$. 
\end{claim}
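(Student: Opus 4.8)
The plan is to run the Kleitman--Winston graph container algorithm on $G$ in two phases, using Lemma~\ref{generalcodeserrorsupersat} for the first phase and Lemma~\ref{supersatforhammingcodes} for the second. Set $D:=H(n,t)$. In Phase~1 we run the algorithm with parameter $\Delta':=\alpha D/2 = \tfrac{n}{20t}$: at each step we delete the current maximum-degree vertex $u$ if $u\notin I$, and if $u\in I$ with degree at least $\Delta'$ we add $u$ to $S$ and delete $u$ together with its $G$-neighbourhood. Each such "picking" step removes at least $\Delta'$ vertices, so after at most $2^n/\Delta' = 2^n\cdot 20t/n = 20t D$ picking steps Phase~1 ends; crucially, by Lemma~\ref{generalcodeserrorsupersat} (applied with the current graph in the role of $\mathcal C$, noting the algorithm only ever deletes vertices so current maximum degree is at least $\alpha$ times the current number of $I$-vertices\,---\,one needs the standard observation that once the max degree drops below $\alpha|\mathcal C|$ there is no vertex of $\mathcal C$ of high degree, hence the remaining vertex set has size $<2D$), Phase~1 terminates with a graph $G'$ on fewer than $2D$ vertices. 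The number of sets $S$ produced in Phase~1 is at most $\binom{2^n}{\le 20tD}$.

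In Phase~2 we continue the same algorithm on $G'$ but with the much smaller parameter $\Delta:=\eps n/(t\log n)$ (I will instead phrase everything with a single bookkeeping parameter and simply split the analysis, as the excerpt indicates). Now each picking step either adds a vertex of current degree $\ge\Delta$ (removing $\ge\Delta$ vertices) or adds a low-degree vertex and terminates. Since Phase~2 starts from a graph on $<2D$ vertices, it performs at most $2D/\Delta = 2Dt\log n/(\eps n)$ further picking steps. Thus the total number of vertices ever added to $S$ across both phases is at most $20tD + O(Dt\log n/n)$. Here is the one genuine arithmetic check: we need this to be at most $\eps H(n,t)/(t\log n)$, i.e. $20tD + O(Dt\log n/n)\le \eps D/(t\log n)$; this forces roughly $t^2\log n \ll \eps$... which is false. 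So I must be more careful: the correct reading is that the bound on $|S|$ should come out as $o(H(n,t))$, not $\le \eps H(n,t)/(t\log n)$, and the stated codomain $\binom{V(G)}{\le \eps H(n,t)/(t\log n)}$ of $f$ must actually be the bound that makes $\binom{2^n}{\le |S|} = 2^{o(H(n,t))}$. In fact $\binom{2^n}{\le m} \le 2^{m\log(e2^n/m)} \le 2^{mn}$ roughly, so we need $m = |S| \ll H(n,t)/n$; and indeed $20tD$ is \emph{not} $\ll D/n$, so the right thing is that Phase~1 uses parameter $\Delta' \gg 2^n/(H(n,t)/n) = nV(n,t)$, which is legitimate since $\alpha D = n/(10t) $... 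I will recompute the thresholds so that the product $|S|\cdot n = o(H(n,t))$, using $t \ll (n/\log^2 n)^{1/3}$ exactly where it is needed.

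The key steps, in order: (1) define $G$, fix a total order on $\P(n)$, and describe the two-phase algorithm explicitly as in Section~\ref{sec:over}, with $f(S):=V(G_{\text{final}})$; (2) verify well-definedness of $f$ exactly as in Lemma~\ref{tiltcont}, since the algorithm always picks the maximum-degree vertex and breaks ties by the fixed order, so two independent sets yielding the same $S$ yield the same sequence of graphs; (3) bound $|S|$: after Phase~1, using Lemma~\ref{generalcodeserrorsupersat}, the surviving graph has $<2H(n,t)$ vertices, and the number of picking steps is controlled by the parameter; after Phase~2, using that we start from $<2H(n,t)$ vertices, bound the remaining picking steps; conclude $|S|$ is small enough that $\binom{2^n}{\le|S|}=2^{o(H(n,t))}$; (4) bound $|f(S)|$: at termination the graph $G_{\text{final}}$ has maximum degree $<\Delta = \eps n/(t\log n)$, hence $e(G_{\text{final}}) < \tfrac12|V(G_{\text{final}})|\cdot \eps n/(t\log n)$; if $|f(S)| = |V(G_{\text{final}})| \ge (1+\eps)H(n,t)$ then Lemma~\ref{supersatforhammingcodes} with $x = \eps H(n,t)$ forces at least $\eps H(n,t)\cdot n/(2t)$ bad pairs, i.e. edges, contradicting the edge bound provided $\eps H(n,t)\cdot n/(2t) > \tfrac12\cdot 2H(n,t)\cdot \eps n/(t\log n)$, which holds for large $n$ since $\log n \to \infty$; (5) set $\mathcal F := \{S\cup f(S)\}$, check (i)--(iii), absorbing the $+|S|$ error into the $o(1)$ in (iii).

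The main obstacle is getting the two parameter thresholds right simultaneously: Phase~1's parameter $\Delta'$ must be large enough that $\binom{2^n}{\le 2^n/\Delta'} = 2^{o(H(n,t))}$ (which needs $\Delta' \gg n V(n,t)$, i.e. $\Delta'$ comparable to $\alpha H(n,t)$ up to the $\log$ losses in estimating the binomial), while Phase~2's parameter $\Delta$ must be small enough that $\Delta(G_{\text{final}})<\Delta$ combined with Lemma~\ref{supersatforhammingcodes} forces $|V(G_{\text{final}})| \le (1+\eps)H(n,t)$ (which needs $\Delta = o(n/t)$, and the factor $1/\log n$ does this) yet large enough that the number of Phase~2 picking steps, $2H(n,t)/\Delta$, contributes only $o(H(n,t)/n)$ to $|S|$ (this is where $t\ll (n/\log^2 n)^{1/3}$ is used: it ensures $H(n,t)/\Delta = 2H(n,t)t\log n/(\eps n) \ll H(n,t)/(n\log n)$ fails in the naive form, so one actually needs to track the $\log$ factors carefully and this cubic bound on $t$ is exactly the point where the computation closes). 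I will carry out this threshold bookkeeping explicitly, as it is the heart of why the stated range of $t$ appears.
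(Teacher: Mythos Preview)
Your overall structure is right --- run the container algorithm, use Lemma~\ref{generalcodeserrorsupersat} while the surviving vertex set is large, use Lemma~\ref{supersatforhammingcodes} once it is small --- but there is a genuine gap in your Phase~1 bookkeeping, and that gap is exactly why your arithmetic ``forces $t^2\log n \ll \eps$'' and collapses.

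The error is that you bound the number of Phase~1 picks by $2^n/\Delta'$, i.e.\ you count additively (``each pick removes $\ge \Delta'$ vertices''). That throws away the whole content of Lemma~\ref{generalcodeserrorsupersat}: while $|V(G_{i-1})|\ge 2H(n,t)$ the max-degree vertex has degree at least $\alpha|V(G_{i-1})|$, so each pick removes an $\alpha$ \emph{fraction} of the current vertex set, not a fixed amount. Hence after $|S_1|$ picks in this regime you have $(1-\alpha)^{|S_1|}2^n \le 2H(n,t)$, giving
\[
|S_1|\ \le\ \frac{\log\bigl(2^n/2H(n,t)\bigr)}{\log\bigl(1/(1-\alpha)\bigr)}\ \le\ O\!\left(\frac{\log V(n,t)}{\alpha}\right)\ =\ O\!\left(\frac{t\,H(n,t)}{n}\cdot t\log n\right),
\]
using $\alpha=n/(10tH(n,t))$ and $\log V(n,t)\le t\log n$. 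This is $\le \tfrac{\eps}{2}H(n,t)/(t\log n)$ precisely when $t^3\log^2 n \ll n$, which is the stated hypothesis. Your additive bound $|S_1|\le 2^n/\Delta'$ is exponentially worse and cannot close. (Incidentally, your equation $2^n\cdot 20t/n = 20tD$ is also wrong: $2^n = D\cdot V(n,t)$, not $D\cdot n$.)

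Two smaller points. First, the paper runs the algorithm with a \emph{single} parameter $\eps n/(4t)$ throughout and merely splits the \emph{analysis} of $|S|$ into the two regimes $|V(G_{i-1})|\ge 2H(n,t)$ and $|V(G_{i-1})|<2H(n,t)$; there is no need to change parameters mid-stream. Second, with the correct terminal parameter $\eps n/(4t)$ the Phase~2 count is $|S_2|\le 2H(n,t)/(\eps n/4t)=8tH(n,t)/(\eps n)$, and this is $\le \tfrac{\eps}{2}H(n,t)/(t\log n)$ iff $t^2\log n \ll n$, which is implied by the hypothesis. Your step~(4) bounding $|f(S)|$ via Lemma~\ref{supersatforhammingcodes} is correct. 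The claim as stated, with $|S|\le \eps H(n,t)/(t\log n)$, is exactly what is proved --- you should not have retreated to ``$|S|=o(H(n,t))$''.
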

%We will think of the set $S$ as the \emph{fingerprint} of the independent set $I$, and $S\cup f(S)$ the \emph{container} of $I$.
To prove the claim, fix an arbitrary total order $v_1, \dots, v_{2^n}$ on the vertices of $V(G)$. Given any independent set $I$ in $G$, define $G_0 := G$, and take $S$  to be initially empty. We add vertices to $S$  through the following iterative process:
 At Step $i$, let $u$ be the maximum degree vertex of $G_{i-1}$ (with ties broken by our fixed total order). If $u \notin I$ then define $G_i := G_{i-1} \setminus \{u\}$, and proceed to Step $i+1$. Alternatively, if $u \in I$ and $\deg_{G_{i-1}}(u) \geq \eps {n}/{4t}$ then add $u$ to $S$, define $G_i := G_{i-1} \setminus (\{u\} \cup N_G(u))$, and proceed to Step $i+1$. Finally, if $u \in I$ and $\deg_{G_{i-1}}(u) <  \eps {n}/{4t}$, then set $f(S) := V(G_i)$ and terminate.

Observe that for any independent set $I $ in $G$ the process defined ensures that $S \subseteq I$  and $I \subseteq S \cup f(S)$. Further, at the end of the process we know that
$\Delta (G_i) < \eps {n}/{4t}$ and so $e(G_i)< |V(G_i)| \eps {n}/{4t}$. Hence, Lemma~\ref{supersatforhammingcodes} implies that $|f(S)|=|V(G_i)|\leq (1+\eps)H(n,t)$. Moreover it is easy to see that $f$ is well-defined.

To complete the proof of the claim, it 
remains to prove that $|S|\leq \eps {H(n,t)}/({t\log n})$.
We will distinguish two stages in the above algorithm, according to the size of $V(G_i)$. Let $S_1$ denote the set of vertices $u \in S$ that were added to $S$ in some Step $i$ of the algorithm where $|V(G_{i-1})|\geq 2 H(n,t)$. Set $S_2 :=S \setminus S_1$.
So there is some $k$ such that, up to and including Step $k$, every vertex added to $S$ lies in $S_1$, and every vertex added to $S$ after Step~$k$ lies in $S_2$.
%, and will have $S=S_1\cup S_{2}$. For the first  stage we will apply Lemma~\ref{generalcodeserrorsupersat} and in the second stage we will apply Lemma~\ref{supersatforhammingcodes}.

By Lemma~\ref{generalcodeserrorsupersat}, for every $i \leq k$, at Step~$i$ we remove at least an $\alpha$ proportion of the vertices from $G_{i-1}$ to obtain $G_i$.
Thus, $|S_1|=k$ and $(1-\alpha)^k 2^n\leq 2H(n,t)$. Note that $\alpha \rightarrow 0$ as $n \rightarrow \infty$, so as $n$ is sufficiently large we have that $\alpha \leq 10 \log (1/(1-\alpha))$. 
Therefore,
$$|S_1|\leq \frac{\log \left(\frac{2^n}{2H(n,t)}\right)}{\log\left(\frac{1}{1-\alpha}\right)}\leq 10\frac{\log V(n,t)}{\alpha}\leq 5000\frac{tH(n,t)}{n}t\log(n/t)\leq \frac{\eps}{2} \frac{H(n,t)}{t\log n}.$$
Note that in the last inequality we use that $t\ll \sqrt[3]{\frac{ n}{\log^2 n}}$.

After Step $k$ we remove at least 
${\eps n}/{4t}$ vertices at each step, so we have $$|S_{2}|\leq \frac{8tH(n,t)}{\eps n}\leq \frac{\eps}{2}\frac{H(n,t)}{t\log n}.$$ 
Hence,  $$|S|=|S_1|+|S_2|\leq \eps\frac{H(n,t)}{t\log n},$$ as required. This finishes the proof of the claim.

\medskip

Define $\mathcal F$ to be the collection of all the sets $S \cup f(S)$ for every $S \in \binom{V(G)}{\leq \eps\frac{H(n,t)}{t\log n}} $. Then (ii) clearly holds. Further,
$$
|\mathcal{F}|  \leq \binom{2^n}{\leq  \eps\frac{H(n,t)}{t\log n}}\leq2^{2\eps\frac{H(n,t)}{t\log n}\log (t V(n,t)\log (n)/\eps)}\leq 2^{2\eps\frac{H(n,t)}{t\log n}(2t\log n + \log t + \log \log n + \log\frac{1}{\eps})}
\leq 2^{5 \eps H(n,t)}
$$
and $|F| \leq (1+2 \eps )H (n,t)$ for all $F \in \mathcal F$. Since $0<\eps <1$ was arbitrary, this proves the lemma.

\endproof

\subsection{Counting $2$-$(n,k,d)$-codes}
In this subsection, all pairs of sets considered are \emph{unordered}.
Let us now turn our attention to the space $\mathcal{Y}$ of  pairs of  disjoint $k$-subsets of $[n]$ (for some fixed $0<k \leq n/2$). 
 Given two pairs $(A_1,A_2)$, $(B_1,B_2) \in \mathcal Y$, the \emph{transportation distance}, or \emph{Enomoto--Katona distance}  is defined by $$d((A_1,A_2),(B_1,B_2)):=\min \{|A_1\setminus B_1|+|A_2 \setminus B_2|,|A_1\setminus B_2|+|A_2\setminus B_1|\}.$$
For convenience, throughout this subsection we will write \emph{distance} when we mean transportation distance.
The notion of transportation distance has been widely studied (also in a more general setting for metric spaces). See for example~\cite{vill} and the introduction of~\cite{bollobastransport} for background on the topic.

We say that a collection $\mathcal C \subseteq \mathcal Y$ is a \emph{$2$-$(n,k,d)$-code} if the distance between any two elements of $\mathcal C$ is at least $d$.
Write $C(n,k,d)$ for the maximum size of a $2$-$(n,k,d)$-code. Brightwell and Katona~\cite{bright} proved that
\begin{align}\label{Cbound}
C(n,k,d)\leq \frac{1}{2}\frac{n(n-1)\cdot \ldots\cdot (n-2k+d)}{\left ( k(k-1)\cdot\ldots\cdot \lceil \frac{d+1}{2}\rceil \right ) \left ( k(k-1)\cdot\ldots\cdot \lfloor \frac{d+1}{2}\rfloor\right )}=:H(n,k,d).
\end{align}
Recently the value of $C(n,k,d)$ has been determined for many values of $(n,k,d)$ (see~\cite{bollobastransport, chee}). As an example, we have equality or are `close' to equality in (\ref{Cbound}) when $k \geq 2$, $d =2k-1$ and  for certain (congruency) classes of $n$ (see~\cite{chee}). Further, the  bound in (\ref{Cbound}) is asymptotically sharp for fixed $k$, $d$ and $n \rightarrow \infty$ (see~\cite{bollobastransport}).
Our goal in this subsection is to prove the following  upper bound on the number of $2$-$(n,k,d)$-codes.

\begin{theorem}\label{numberoftransportcodes}
Suppose that $k=k(n)\leq n/2$ and  $t=t(n)\ll\sqrt[3]{\frac{k}{\log^2 n}}$ then the number of $2$-$(n,k,2t+1)$-codes is  at most $2^{H(n,k,2t+1)(1+o(1))}$.
\end{theorem}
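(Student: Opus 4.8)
The plan is to mirror exactly the two-stage container argument used for Theorem~\ref{numberoferrorcodes} (via Lemma~\ref{hammingcont}), but now with the ambient space $\mathcal Y$ of unordered pairs of disjoint $k$-subsets of $[n]$ in place of $\mathcal P(n)$, and with the transportation (Enomoto--Katona) distance in place of the Hamming distance. So first I would set up the auxiliary graph $G$ with $V(G)=\mathcal Y$, where two distinct pairs are adjacent if and only if their transportation distance is at most $2t$; then a $2$-$(n,k,2t+1)$-code is precisely an independent set in $G$, and it suffices to prove a container lemma: a family $\mathcal F\subseteq \mathcal P(\mathcal Y)$ with $|\mathcal F|=2^{o(H(n,k,2t+1))}$, every $2$-$(n,k,2t+1)$-code contained in some member of $\mathcal F$, and $|F|\le (1+o(1))H(n,k,2t+1)$ for all $F\in\mathcal F$. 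This immediately gives the theorem just as in Section~\ref{sec:code}.

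The two supersaturation inputs need to be re-derived in this setting. For the fine one (the analogue of Lemma~\ref{supersatforhammingcodes}): for $X\in\mathcal Y$ let $K_X:=\{A\in\mathcal C : d(A,X)\le t\}$, let $V$ denote the volume of a transportation ball of radius $t$ (a quantity depending only on $n,k,t$, not on the centre, by vertex-transitivity of $\mathcal Y$ under $S_n$), and let $W$ be the maximum over $d\ge 1$ of the size of the intersection of two radius-$t$ balls with centres at distance $d$. Then $\sum_k k|S_k| = |\mathcal C|\,V \ge |\mathcal Y| + xV$ whenever $|\mathcal C|\ge |\mathcal Y|/V + x = H(n,k,2t+1)+x$ (using that $|\mathcal Y|/V$ is the transportation analogue of the Hamming bound, i.e.\ $H(n,k,2t+1)$ up to the $(1+o(1))$ factors guaranteed by~(\ref{Cbound}) and its asymptotic sharpness), and the usual convexity/counting argument gives at least $x\cdot V/W$ bad pairs. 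The crucial estimate is $V/W \gg k/t$: removing a single coordinate from a transportation ball (passing from radius $t$ around a centre at distance $1$) shrinks the volume by a factor of order $k/t$, exactly as $W(t,1)=2V(n-1,t-1)$ did in the Hamming case, with $k$ playing the role that $n$ played there — this is why the hypothesis is $t\ll\sqrt[3]{k/\log^2 n}$ rather than $t\ll\sqrt[3]{n/\log^2 n}$. For the coarse supersaturation (the analogue of Lemma~\ref{generalcodeserrorsupersat}), I would again colour each bad pair by its distance $d$, write $\sum_{d=1}^{2t} W_d E_d = \sum_X \binom{|K_X|}{2} \ge |\mathcal Y|\binom{|\mathcal C|V/|\mathcal Y|}{2}$, and conclude, when $|\mathcal C|\ge 2H(n,k,2t+1)$, that some vertex of $G[\mathcal C]$ has degree at least $\alpha|\mathcal C|$ for a suitable $\alpha$ of order $k/(tH(n,k,2t+1))$.

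With these two lemmas in hand, the container construction is verbatim the Kleitman--Winston run of Lemma~\ref{hammingcont}: run the algorithm with threshold parameter of order $\eps k/t$; split $S=S_1\cup S_2$ according to whether $|V(G_{i-1})|\ge 2H(n,k,2t+1)$; bound $|S_1|$ using the coarse supersaturation (a geometric-decay/logarithm estimate, where the hypothesis $t\ll\sqrt[3]{k/\log^2 n}$ is exactly what makes $|S_1|=o(H/(t\log n))$), bound $|S_2|$ trivially by $O(tH/(\eps k))=o(H/(t\log n))$, and bound $|f(S)|\le (1+\eps)H(n,k,2t+1)$ using the fine supersaturation together with $\Delta(G_i)<\eps k/t$. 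Finally $|\mathcal F|\le \binom{|\mathcal Y|}{\le \eps H/(t\log n)} = 2^{O(\eps H)}$ using $\log|\mathcal Y| = O(k\log n) = O(t\log n \cdot (\text{stuff}))$ exactly as before, and letting $\eps\to 0$ finishes the proof.

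The main obstacle is the combinatorial geometry of transportation balls: one must compute (or at least sharply bound above and below) the volume $V$ of a radius-$t$ ball in $(\mathcal Y,d)$ and the pairwise-intersection volume $W$, and verify both that $|\mathcal Y|/V = (1+o(1))H(n,k,2t+1)$ (so the supersaturation thresholds really are the right quantity) and that $V/W$ is of order $k/t$. The asymmetry of $\mathcal Y$ — two disjoint $k$-sets, with the min-over-two-matchings in the distance — makes these counts fiddlier than the clean binomial identities $V(n,t)=\sum_{j\le t}\binom{n}{j}$, $W(t,1)=2V(n-1,t-1)$ of the Hamming case; I expect this is where the real work (and the precise dependence of the admissible range of $t$ on $k$) lies. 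Everything downstream of these estimates is a routine transcription of Section~\ref{sec:code}.
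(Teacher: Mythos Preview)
Your two-stage container plan is exactly the paper's, and the final counting estimates (the $|S_1|,|S_2|$ bounds and the $|\mathcal F|$ bound) transcribe verbatim. The one substantive difference is in the choice of ``ball'' used to drive the two supersaturation lemmas, and here the paper makes a move that dissolves precisely the obstacle you flag at the end.

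You propose to work with genuine metric balls $B((A,B),t)=\{X\in\mathcal Y: d((A,B),X)\le t\}$ inside $\mathcal Y$ itself, and then you correctly note that the volume $V$ and overlap $W$ of such balls are awkward (the min-over-two-matchings in the distance makes them so), and that you would further need $|\mathcal Y|/V=(1+o(1))H(n,k,2t+1)$, which is not obvious. The paper avoids all of this by \emph{not} using transportation balls at all. Instead, it passes to the auxiliary space $\mathcal Z(k-t)$ of unordered pairs of disjoint $(k-t)$-sets and defines, for $(A,B)\in\mathcal Y$, the ``shadow ball''
\[
P((A,B),k-t):=\bigl\{(U,V): |U|=|V|=k-t,\ U\subseteq A,\ V\subseteq B \text{ or vice versa}\bigr\}\subseteq \mathcal Z(k-t).
\]
This buys exact identities in place of asymptotics: $|P((A,B),k-t)|=\binom{k}{t}^{2}$; the Brightwell--Katona quantity satisfies $H(n,k,2t+1)=|\mathcal Z(k-t)|/\binom{k}{t}^{2}$ exactly (so no ``$(1+o(1))$'' matching of thresholds is needed); the overlap is maximised at distance $1$ with $W(k-t,1)=\binom{k}{k-t}\binom{k-1}{k-t}$, giving the clean ratio $\binom{k}{t}^{2}/W(k-t,1)=k/t$; and the key covering property ``if two shadow balls intersect then the centres are at transportation distance $\le 2t$'' is immediate. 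With these exact formulas, the fine supersaturation (at least $xk/t$ bad pairs once $|\mathcal C|\ge H(n,k,2t+1)+x$) and the coarse one (some vertex of $G[\mathcal C]$ of degree $\ge \alpha|\mathcal C|$ with $\alpha=k/(10tH(n,k,2t+1))$ once $|\mathcal C|\ge 2H$) follow by literally the same double-counting as in the Hamming case, with $K_X$ now indexed by $X\in\mathcal Z(k-t)$ rather than $X\in\mathcal Y$.

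So your architecture is right; the missing idea is to replace transportation balls in $\mathcal Y$ by these shadow balls in $\mathcal Z(k-t)$, which turns the ``fiddly'' geometry you anticipated into exact binomial identities and makes the rest of the argument routine.
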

%Theorem~\ref{numberoftransportcodes} should be read as: for every $\eps>0$ there is a $\delta>0$ such that if $\frac{t}{\sqrt[3]{\frac{k}{\log^2n}}}<\delta$ then the number of $2-(n,k,2t+1)$-codes is at most $2^{H(n,k,2t+1)(1+\eps)}$.
Similarly to Theorem~\ref{numberoferrorcodes}, we believe that the correct range of $t$ in Theorem~\ref{numberoftransportcodes} should be $t\ll\frac{k}{\log n}$.% and spectral methods could potentially be used to improve on our range.

Given a pair $(A,B) \in \mathcal Y$, let $P((A,B),u)$ denote the family of pairs $(U,V )$ where $|U| =|V | = u$ and $U \subseteq A, V \subseteq B$ or vice versa. Then $|P((A,B),u)|=\binom{k}{u}^2$. Let $\mathcal{Z}(u)$ be the space of pairs of disjoint sets of size $u$ in $[n]$. So $|\mathcal{Z}(u)|=\frac12\binom{n}{u}\binom{n-u}{u}$ and note that $\mathcal Y=\mathcal Z(k)$.  We will refer to  $P((A,B),k-t)$ as the \emph{ball of radius $t$ around $(A,B)$}. In particular,  for any $(A_1,B_1),(A_2,B_2)$ in $\mathcal{Y}$, if $P((A_1,B_1),k-t)$ and $P((A_2,B_2),k-t)$ intersect, then $d((A_1,A_2),(B_1,B_2))\leq 2t.$

The proof strategy for Theorem~\ref{numberoftransportcodes} is extremely close to that of Theorem~\ref{numberoferrorcodes}. The following supersaturation lemma is an analogue of Lemma~\ref{supersatforhammingcodes}.
\begin{lemma}\label{t1} Let $\mathcal C \subseteq \mathcal{Y}$. If 
 $|\mathcal C|\geq H(n,k,2t+1)+x$ then there are at least $xk/t$ pairs $ (A_1,B_1),  (A_2,B_2)\in \mathcal C$ at distance at most $2t$.
\end{lemma}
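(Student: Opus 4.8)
The plan is to mirror the proof of Lemma~\ref{supersatforhammingcodes}, replacing Hamming balls by transportation balls $P((A,B),k-t)$ and the ambient space $\{0,1\}^n$ by the space $\mathcal Z(k-t)$ of pairs of disjoint $(k-t)$-subsets of $[n]$. The first step is to record the two volume quantities we need. Each ball $P((A,B),k-t)$ has size $\binom{k}{k-t}^2 = \binom{k}{t}^2$, so the ``volume of a ball'' is $v := \binom{k}{t}^2$. The second quantity is the maximum possible intersection of two balls whose centres are at distance at most $2t$; call this $w$. As in the Hamming case, one expects the worst case to occur when the two centres are as close as possible (distance $1$ or $2$), and a direct count gives an upper bound of roughly $w \le \binom{k}{t}\binom{k}{t-1}\cdot(\text{small factor})$, i.e. $w/v$ is of order $t/k$. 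The only real content here is verifying the Brightwell--Katona bound \eqref{Cbound} can be written as $H(n,k,2t+1) = |\mathcal Z(k-t)|/v$ (up to the correct normalisation), which follows by expanding the falling factorials in \eqref{Cbound} and comparing with $|\mathcal Z(k-t)| = \tfrac12\binom{n}{k-t}\binom{n-(k-t)}{k-t}$ divided by $\binom{k}{t}^2$; this is precisely the statement that balls of radius $t$ packed disjointly cannot exceed $|\mathcal Z(k-t)|/v$ elements.

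Given these, the argument is a double-counting exactly as in Lemma~\ref{supersatforhammingcodes}. For each $X \in \mathcal Z(k-t)$ let $K_X := \{(A,B) \in \mathcal C : X \in P((A,B),k-t)\}$ be the set of codewords whose ball of radius $t$ contains $X$; equivalently, the codewords at distance $\le t$ from $X$ in the transportation metric on $\mathcal Z(k-t)$. Since each codeword's ball has size $v$, we have $\sum_X |K_X| = |\mathcal C|\, v \ge |\mathcal Z(k-t)| + x v$, using $|\mathcal C| \ge H(n,k,2t+1)+x = |\mathcal Z(k-t)|/v + x$. Now each pair of codewords at distance $\le 2t$ can be ``seen together'' from at most $w$ common points $X$, while each $X$ with $|K_X| = \ell$ contributes $\binom{\ell}{2} \ge \ell - 1$ such pairs. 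Hence the number of bad pairs is at least
\begin{equation*}
\frac{1}{w}\sum_X \binom{|K_X|}{2} \ge \frac{1}{w}\sum_X \big(|K_X| - 1\big) \ge \frac{1}{w}\Big(|\mathcal Z(k-t)| + x v - |\mathcal Z(k-t)|\Big) = \frac{x v}{w} \ge x\cdot\frac{k}{t},
\end{equation*}
where the final inequality uses the bound $w/v = O(t/k)$ from the first step (with the constant absorbed so that $v/w \ge k/t$).

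The step I expect to be the main obstacle is the clean verification that $w/v$ is at most of order $t/k$ — i.e. pinning down the maximal intersection of two transportation balls of radius $t$ and checking the constant works out to give exactly $k/t$ rather than some worse constant. In the Hamming setting this was the identity $W(t,1) = 2V(n-1,t-1)$ together with $V(n-1,t-1)/V(n,t) \le t/n$; here the analogue requires care because of the ``$\min$'' in the definition of transportation distance (a pair $(U,V)$ may sit in a ball either as $U\subseteq A, V\subseteq B$ or swapped), which can double-count and must be handled when bounding $|P((A_1,B_1),k-t)\cap P((A_2,B_2),k-t)|$. Once that combinatorial estimate is in hand, everything else is the routine double-counting above, and the identification $H(n,k,2t+1) = |\mathcal Z(k-t)|/v$ is a straightforward manipulation of the product formula in \eqref{Cbound}.
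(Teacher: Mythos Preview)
Your approach is essentially the same as the paper's: double-count via $K_X$ for $X\in\mathcal Z(k-t)$, use $H(n,k,2t+1)=|\mathcal Z(k-t)|/v$ with $v=\binom{k}{t}^2$, and divide by the maximal overlap $w$ of two balls. The one step you left open---the exact value of $w$---the paper dispatches directly: the maximum overlap occurs at distance $1$, realised by $A_1=A_2$ and $|B_1\cap B_2|=k-1$, and then $W(k-t,1)=\binom{k}{k-t}\binom{k-1}{k-t}$, so $v/w=\binom{k}{t}/\binom{k-1}{t-1}=k/t$ on the nose (your worry about the ``$\min$'' in the transportation distance disappears because $A$ is disjoint from both $B_1$ and $B_2$, so the side of the unordered pair lying in $A$ is forced).
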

\begin{proof}
Note that 
\begin{align}\label{b1}
\sum _{(A,B) \in \mathcal C}| P((A,B),k-t)| & \ge \binom{k}{k-t}^2\left(H(n,k,2t+1)+x\right)=\frac12\binom{n}{k-t}\binom{n-k+t}{k-t}+x\binom{k}{k-t}^2 \nonumber\\ & = |\mathcal{Z}(k-t)|+x\binom{k}{k-t}^2. 
\end{align}
Let $W(k-t,d)$ denote the largest possible intersection of two balls $P( (A_1,B_1),k-t)$ and $P( (A_2,B_2),k-t)$, amongst all $ (A_1,B_1),  (A_2,B_2)\in \mathcal Y$ with $d((A_1,B_1),(A_2,B_2))=d$. This is maximised when  
$A_1=A_2$ and $|B_1\cap B_2 |=k-1$, so $W(k-t,d)\leq W(k-t,1)$ for all $d\geq 2$. Now $W(k-t,1)=\binom{k}{k-t}\binom{k-1}{k-t}=\binom{k}{k-t}^2 \binom{k-1}{t-1}/\binom{k}{t}$.  Combining this with (\ref{b1}) we see that there are  at least
 $$ x{\binom{k}{t}}/{\binom{k-1}{t-1}}=x{k}/{t}$$ 
pairs $ (A_1,B_1),  (A_2,B_2)\in \mathcal C$ such that $P((A_1,B_1),k-t)$ and $P((A_2,B_2),k-t)$ intersect. Note that each such pair $ (A_1,B_1),  (A_2,B_2)\in \mathcal C$ have distance at most $2t$, as desired.
\end{proof}
%The following table shows the equivalent notions of the two parts of this section, we hope this helps motivate the ideas.

%\begin{center}
 % \begin{tabular}{ p{5cm} | p{5cm} }
    
  %  \textbf{Counting $t$ error correcting codes} & \textbf{Counting $2-(n,k,d)$-codes} \\ \hline \hline
   % The space we work in is $\mathcal{P}(n)$ & The space we work in is $\mathcal{Y}$, the pairs of disjoint $k$-subsets of $[n]$\\ \hline
   % We try to partition $\mathcal{P}(n)$ into disjoint unions of Hamming balls of radius $t$ & We try to partition $\mathcal{Z}(k-t)$ into disjoint unions of $P(\{A,B\},k-t)$-s.\\ \hline
%Volume of the space to be partitioned is $2^n$. & Volume of the space to be partitioned is $|\mathcal{Z}(k-t)|=\frac12\binom{n}{k-t}\binom{n-k+t}{k-t}$. \\ \hline
%Volume of a ball is $V(n,t)=\sum_{k=0}^t \binom{n}{k}$. & Volume of a ball is $|P(\{A,B\},k-t)|=\binom{k}{t}^2$.\\
%    \hline
%Largest possible size of a $t$ error correcting code is $H(n,t)$. & Largest possible size of a $2-(n,k,d)$-code is $H(n,k,d)$. \\
%\hline
%$H(n,t)=\frac{|\mathcal{P}(n)|}{V(n,t)}$ & $H(n,k,2t+1)=\frac{|\mathcal{Z}(k-t)|}{|\mathcal{P}(\{A,B\},k-t)|}$
%  \end{tabular}
%\end{center}

%Given these similarities, the next stronger supersaturation lemma should not come as a surprise. 

Consider the graph $G$ with $V(G)=\mathcal{Y}$, two vertices $(A_1,B_1),(A_2,B_2) $ being connected by an edge of colour $d((A_1,B_1),(A_2,B_2) )$ if they form a bad pair, i.e. their transportation distance is at most $2t$. Define the constant $\alpha$ by $$\alpha :=\frac{k}{10tH(n,k,2t+1)}.$$

\begin{lemma}\label{cont2}
Let $\mathcal{C}\subset \mathcal{Y}$. If $|\mathcal{C}|\geq 2H(n,k,2t+1)$, then there is a vertex $(A_1,B_1)\in\mathcal{C}$ such that its degree in $G[\mathcal{C}]$ is at least $\alpha |\mathcal{C}|$.
\end{lemma}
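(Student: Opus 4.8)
The plan is to mirror the proof of Lemma~\ref{generalcodeserrorsupersat} exactly, replacing the Hamming-ball double counting with the transportation-ball double counting already set up for Lemma~\ref{t1}. First I would let $E_i$ denote the number of edges of colour $i$ in $G[\mathcal C]$ (for $i=1,\dots,2t$) and set $E:=\sum_i E_i$. For each pair $(U,V)\in\mathcal Z(k-t)$ let $K_{(U,V)}:=\{(A,B)\in\mathcal C : (U,V)\in P((A,B),k-t)\}$, i.e.\ the codewords whose radius-$t$ ball contains $(U,V)$. Double counting triples $\big((U,V),\{(A_1,B_1),(A_2,B_2)\}\big)$ with $(U,V)$ in both balls gives
\begin{align}\label{eqt1}
\sum_{k'=1}^{2t} W(k-t,k') E_{k'} = \sum_{(U,V)\in\mathcal Z(k-t)} \binom{|K_{(U,V)}|}{2},
\end{align}
since a pair of codewords at transportation distance $k'$ has ball-intersection of size $W(k-t,k')$ (here $W(k-t,d)$ is as defined just before Lemma~\ref{t1}).

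Next I would lower bound the right-hand side by convexity. The average of $|K_{(U,V)}|$ over $(U,V)\in\mathcal Z(k-t)$ is $|\mathcal C|\binom{k}{k-t}^2/|\mathcal Z(k-t)|$, which by \eqref{b1}-type bookkeeping equals $|\mathcal C|\binom{k}{k-t}^2 \big/ \big(\tfrac12\binom{n}{k-t}\binom{n-k+t}{k-t}\big)$; since $H(n,k,2t+1)=\tfrac12\binom{n}{k-t}\binom{n-k+t}{k-t}/\binom{k}{k-t}^2$, this average is exactly $|\mathcal C|/H(n,k,2t+1)\geq 2$. Jensen's inequality then gives
\begin{align}\label{eqt2}
\sum_{(U,V)\in\mathcal Z(k-t)} \binom{|K_{(U,V)}|}{2} \;\geq\; |\mathcal Z(k-t)|\binom{|\mathcal C|/H(n,k,2t+1)}{2} \;\geq\; \frac{|\mathcal C|^2}{10\, H(n,k,2t+1)}\cdot\frac{|\mathcal Z(k-t)|}{H(n,k,2t+1)},
\end{align}
using $\binom{m}{2}\geq m^2/10$ when $m\geq 2$. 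Combining \eqref{eqt1} with \eqref{eqt2} and bounding $W(k-t,k')\leq W(k-t,1)=\binom{k}{k-t}^2\binom{k-1}{t-1}/\binom{k}{t}=\binom{k}{k-t}^2\, t/k$ on the left, I get $E\cdot \binom{k}{k-t}^2\, t/k \geq |\mathcal C|^2 |\mathcal Z(k-t)|/(10\,H(n,k,2t+1)^2)$, and since $|\mathcal Z(k-t)|/\binom{k}{k-t}^2 = 2H(n,k,2t+1)$ this simplifies to $E\geq |\mathcal C|^2 k/(5t\,H(n,k,2t+1))$ — so the total number of edges in $G[\mathcal C]$ is at least $\tfrac{k}{5t\,H(n,k,2t+1)}|\mathcal C|^2\geq 2\alpha |\mathcal C|^2$. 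Hence the average degree in $G[\mathcal C]$ is at least $4\alpha|\mathcal C|$, so some vertex has degree at least $\alpha|\mathcal C|$, as required.

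The only real care needed is in the identity $\sum_{k'}W(k-t,k')E_{k'}=\sum\binom{|K_{(U,V)}|}{2}$: I must check that for two codewords at transportation distance $k'\le 2t$, the number of common elements of their radius-$t$ balls depends only (up to the stated upper bound) on $k'$, and in particular is bounded by $W(k-t,1)$, which is the content already asserted before Lemma~\ref{t1}. This is where the asymmetry in the transportation metric (the $\min$ over the two pairings) needs a brief remark, but since we only ever use the upper bound $W(k-t,k')\le W(k-t,1)$ the argument does not depend on the exact value. The constants ($10$, $5$, $4$) are deliberately loose and can be adjusted; the factor $n$ versus $k$ in $\alpha$ is the one substantive change from Lemma~\ref{generalcodeserrorsupersat}, and it comes out correctly because $W(k-t,1)/\binom{k}{k-t}^2=t/k$ rather than $\sim 2t/n$. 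I expect the main obstacle to be purely bookkeeping: keeping the binomial ratios $\binom{k}{k-t}^2$, $|\mathcal Z(k-t)|$, and $H(n,k,2t+1)$ straight so that the final bound collapses to a clean multiple of $\alpha|\mathcal C|$.
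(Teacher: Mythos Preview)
Your approach is exactly the one the paper takes: define $K_X$ for $X\in\mathcal Z(k-t)$, use convexity to get $\sum_X\binom{|K_X|}{2}\geq |\mathcal Z(k-t)|\binom{|\mathcal C|/H(n,k,2t+1)}{2}$, bound the left side by $W(k-t,1)\cdot E$, and simplify. Two small points: first, \eqref{eqt1} is not literally an equality here because $W(k-t,d)$ is defined as the \emph{maximum} intersection size at distance $d$ (not a fixed value), so you only have $\sum_X\binom{|K_X|}{2}\leq W(k-t,1)E$ --- but as you note, that inequality is all you use; second, you have a factor-of-two slip in the identity: $|\mathcal Z(k-t)|/\binom{k}{k-t}^2 = H(n,k,2t+1)$, not $2H(n,k,2t+1)$, which gives $E\geq \alpha|\mathcal C|^2$ rather than $2\alpha|\mathcal C|^2$, but the average degree is then still $\geq 2\alpha|\mathcal C|$ and the conclusion is unaffected.
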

\begin{proof}
One can prove the lemma by arguing in a similar way to the proof of Lemma~\ref{generalcodeserrorsupersat}. Now though given $X \in \mathcal Z(k-t)$ we take $K_X:=\{(A,B) \in \mathcal C : X \in P((A,B), k-t)\}$
and $S_i:= \{X  \in \mathcal Z(k-t): |K_X|=i\}$. By arguing  as in Lemma~\ref{generalcodeserrorsupersat} and using that $|\mathcal{Z}(k-t)|=H(n,k,2t+1)\binom{k}{k-t}^2$
we have that the number $E$ of edges in $G[\mathcal{C}]$ satisfies
$$E\geq \frac{|\mathcal{Z}(k-t)|}{W(k-t,1)} \binom{|\mathcal{C}|/H(n,k,2t+1)}{2}\geq |\mathcal{C}|^2\frac{k}{20tH(n,k,2t+1)}$$ and the result follows.
(In the last inequality we use that  $W(k-t,1)=\binom{k}{k-t}^2 \binom{k-1}{t-1}/\binom{k}{t}$.)
\end{proof}

The following container lemma immediately implies Theorem~\ref{numberoftransportcodes}; its proof follows the same approach used in the proof of Lemma~\ref{hammingcont}.
\begin{lemma}\label{hammingcont2} Let $k=k(n)\leq n/2$ and $t=t(n)\ll \sqrt[3]{\frac{ k}{\log^2 n}}$. There is a collection $\mathcal F $ of subsets of $\mathcal Y$ with the following properties:
\begin{itemize}
\item[(i)]  $|\mathcal F|=2^{o(H(n,k,2t+1))}$;
\item[(ii)] If $\mathcal C \subseteq \mathcal Y$ is a $2$-$(n,k,2t+1)$-code, then $\mathcal C$ is contained in some member of $\mathcal F$;
\item[(iii)] $|F|\leq (1+o(1))H(n,k,2t+1)$ for every $F \in \mathcal F$.
\end{itemize}
\end{lemma}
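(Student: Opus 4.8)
The plan is to mirror the proof of Lemma~\ref{hammingcont} almost verbatim, using Lemma~\ref{t1} in place of Lemma~\ref{supersatforhammingcodes} and Lemma~\ref{cont2} in place of Lemma~\ref{generalcodeserrorsupersat}. Fix $0<\eps<1$ and $n$ large. Let $G$ be the auxiliary graph on vertex set $\mathcal Y=\mathcal Z(k)$ in which two distinct pairs are adjacent iff their transportation distance is at most $2t$; then $2$-$(n,k,2t+1)$-codes are exactly the independent sets of $G$. Write $H:=H(n,k,2t+1)$ and $N:=|\mathcal Y|=\tfrac12\binom{n}{k}\binom{n-k}{k}$ for brevity. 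Run the Kleitman--Winston algorithm of Section~\ref{sec:over} with parameter $\Delta=\eps k/(4t)$: at each step remove the current maximum-degree vertex $u$, deleting only $u$ if $u\notin I$, deleting $u$ together with $N_G(u)$ (and recording $u$ in $S$) if $u\in I$ with $\deg_{G_{i-1}}(u)\ge\Delta$, and terminating (recording $u$ in $S$, setting $f(S):=V(G_i)$) if $u\in I$ with $\deg_{G_{i-1}}(u)<\Delta$. As always, $I\subseteq S\cup f(S)$ and $f$ is well-defined because $u$ is always chosen canonically. At termination $\Delta(G_i)<\eps k/(4t)$, so $e(G_i)<|V(G_i)|\cdot \eps k/(4t)$, and applying Lemma~\ref{t1} with $x$ the excess of $|V(G_i)|$ over $H$ gives $|f(S)|=|V(G_i)|\le(1+\eps)H$, which is property~(iii).

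The only real work, exactly as in Lemma~\ref{hammingcont}, is bounding $|S|$ by $o(H)$ — more precisely by $\eps H/(t\log n)$ — and this is where the two-stage analysis is needed, since $\Delta=\eps k/(4t)$ is far too small to control $|S|$ directly in the early steps when $V(G_i)$ is still of size comparable to $N$. Split $S=S_1\cup S_2$, where $S_1$ collects the vertices added to $S$ during steps with $|V(G_{i-1})|\ge 2H$ and $S_2=S\setminus S_1$. For the first stage, Lemma~\ref{cont2} guarantees that whenever $|V(G_{i-1})|\ge 2H$ the maximum degree vertex has degree at least $\alpha|V(G_{i-1})|$ with $\alpha=k/(10tH)$, so each such step (whether or not $u\in I$) shrinks $|V(G_i)|$ by at least a factor $(1-\alpha)$; hence $(1-\alpha)^{|S_1|}N\le 2H$, giving $|S_1|\le \log(N/2H)/\log(1/(1-\alpha))$. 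Using $\alpha\to 0$ so that $\alpha\le 10\log(1/(1-\alpha))$, and $N/2H\le |\mathcal Z(k-t)|\le \binom{n}{k-t}^2$ — so that $\log(N/2H)=O(k\log(n/k))$ — one gets $|S_1|\le O(tH\cdot k\log(n/k)/k)=O(tH\log n)$ times a further $t$ from $1/\alpha$; carefully, $|S_1|\le 10\log(N/2H)/\alpha\le O(t^2 H\log(n)/k)\le \tfrac{\eps}{2}H/(t\log n)$, the last inequality being exactly where the hypothesis $t\ll\sqrt[3]{k/\log^2 n}$ is used. For the second stage, every recorded vertex after step $k$ removes at least $\eps k/(4t)$ vertices of $G_{i-1}$ and $|V(G_{i-1})|\le 2H$ throughout, so $|S_2|\le 8tH/(\eps k)\le \tfrac{\eps}{2}H/(t\log n)$ for $n$ large. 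Adding, $|S|\le \eps H/(t\log n)$.

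Finally set $\mathcal F:=\{\,S\cup f(S): S\in\binom{V(G)}{\le \eps H/(t\log n)}\,\}$. Property~(ii) is immediate from $I\subseteq S\cup f(S)$. For~(i), bound $|\mathcal F|\le \binom{N}{\le \eps H/(t\log n)}\le 2^{(\eps H/(t\log n))\log(N t\log(n)/(\eps H))}$; since $N/H=|\mathcal Z(k-t)|/\binom{k}{k-t}^2\le \binom{n}{k-t}^2$ has logarithm $O(k\log(n/k))=O(k\log n)$ and $H\le$ something at most $N$, the exponent is $O(\eps H\cdot k\log n/(t\log n))$... more care is needed here: actually the relevant estimate, just as in Lemma~\ref{hammingcont}, is $\log(Nt\log(n)/H)=O(t\log n)$ after noting $N/H\le \binom{n}{k-t}^2$ and combining logarithms, so $|\mathcal F|\le 2^{O(\eps H)}$. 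And~(iii) holds since $|S\cup f(S)|\le |f(S)|+|S|\le (1+\eps)H+\eps H/(t\log n)\le (1+2\eps)H$. As $\eps\in(0,1)$ was arbitrary, this gives~(i)--(iii) with the $o(\cdot)$ statements, proving the lemma. The main obstacle, as flagged, is getting the arithmetic of the $|S_1|$ bound right: one must correctly estimate $\log(|\mathcal Y|/H(n,k,2t+1))$ — the analogue of $\log V(n,t)$ in the Hamming case — in terms of $k$ and $n$, and check that the resulting $t^2H\log(n)/k$ term is indeed $o(H/(t\log n))$ precisely under $t\ll\sqrt[3]{k/\log^2 n}$; the bound $|\mathcal Z(k-t)|\le \binom{n}{k-t}^2$ together with $k\le n/2$ should suffice, and the rest is routine.
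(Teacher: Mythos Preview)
Your overall strategy is exactly the paper's: run the Kleitman--Winston algorithm with parameter $\Theta(\eps k/t)$, bound $|f(S)|$ via Lemma~\ref{t1}, split $S=S_1\cup S_2$ according to whether $|V(G_{i-1})|\ge 2H$, and use Lemma~\ref{cont2} for the first stage. The structure is fine. But the arithmetic you flag as ``the main obstacle'' is in fact wrong as written, and the route you propose for it does not work.

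The crucial estimate is the size of $\log(N/H)$ where $N=|\mathcal Y|$ and $H=H(n,k,2t+1)$. You bound $N/2H\le|\mathcal Z(k-t)|\le\binom{n}{k-t}^2$ and conclude $\log(N/2H)=O(k\log(n/k))$. Even granting the first inequality, this bound is far too weak: plugging $\log(N/2H)=O(k\log n)$ into $|S_1|\le 10\log(N/2H)/\alpha$ with $\alpha=k/(10tH)$ gives $|S_1|=O(tH\log n)$, which is \emph{not} $\le \tfrac{\eps}{2}H/(t\log n)$ for any growing $n$. Your subsequent ``carefully, $|S_1|\le O(t^2H\log(n)/k)$'' is the correct target, but it does not follow from your estimate---it would require $\log(N/H)=O(t\log n)$, not $O(k\log n)$. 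The same over-estimate breaks your bound on $|\mathcal F|$: with $\log(N/H)=O(k\log n)$ one gets $|\mathcal F|\le 2^{O(\eps Hk/t)}$, which is useless.

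What actually works (and what the paper does) is to compute $N/H$ directly from the formulas for $|\mathcal Y|$ and $H(n,k,2t+1)$:
\[
\frac{N}{H}=\frac{(n-2k+2t)(n-2k+2t-1)\cdots(n-2k+1)}{(t!)^2}\le\left(\frac{en}{t}\right)^{2t},
\]
a product of only $2t$ factors, each at most $n$. Hence $\log(N/H)=O(t\log n)$, and then $|S_1|\le 10\log(N/H)/\alpha=O(t^2H\log(n)/k)$, which is indeed $\le\tfrac{\eps}{2}H/(t\log n)$ precisely when $t\ll\sqrt[3]{k/\log^2 n}$. The $|\mathcal F|$ bound then also goes through since $\log(Nt\log(n)/(\eps H))=O(t\log n)$. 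So the fix is simply to compute $N/H$ explicitly rather than detouring through $|\mathcal Z(k-t)|$.

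One smaller slip: it is not true that ``each such step (whether or not $u\in I$) shrinks $|V(G_i)|$ by at least a factor $(1-\alpha)$''---when $u\notin I$ you remove only $u$. Fortunately $|S_1|$ counts only the steps with $u\in I$, so the inequality $(1-\alpha)^{|S_1|}N\le 2H$ survives; but the parenthetical should be deleted.
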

\begin{proof}
Let $0< \eps <1$ and let $n$  be sufficiently large. Let $G$ be the graph  defined before Lemma~\ref{cont2}.

\begin{claim}\label{claimt}
There exists a function $f : \binom{V(G)}{\leq \frac{\eps H(n,k,2t+1)}{t\log n}} \to \binom{V(G)}{\leq (1+\eps)H(n,k,2t+1)}$  such that, for any independent set $I$ in $G$, there is a subset $S \subseteq I$ where 
$S \in \binom{V(G)}{\leq \frac{\eps H(n,k,2t+1)}{t\log n}} $
 and   $I \subseteq S \cup f(S)$. 
\end{claim}
To prove the claim we argue as in Claim~\ref{claimc} except that we now apply the graph container algorithm with parameter $\eps k/2t$ instead of $\eps n/4t$. That is, at Step~$i$ if $u\notin I$ then set $G_i=G_{i-1}\setminus \{u\}$;
 if $u \in I$ and $\deg_{G_{i-1}}(u) \geq \eps {k}/{2t}$ we add $u$ to $S$, define $G_i := G_{i-1} \setminus (\{u\} \cup N_G(u))$; if $u \in I$ and $\deg_{G_{i-1}}(u) <  \eps {k}/{2t}$, set $f(S) := V(G_i)$ and terminate.

As before we have that for any independent set $I $ in $G$ the process defined ensures that $S \subseteq I$  and $I \subseteq S \cup f(S)$. Further, at the end of the process we know that
$\Delta (G_i) < \eps {k}/{2t}$ and so $e(G_i)< |V(G_i)| \eps {k}/{2t}$. Hence, Lemma~\ref{t1} implies that $|f(S)|=|V(G_i)|\leq (1+\eps)H(n,k,2t+1)$. Moreover $f$ is well-defined.

To complete the proof of the claim, it 
remains to prove that $|S|\leq \eps {H(n,k,2t+1)}/({t\log n})$.
As in Claim~\ref{claimc} we distinguish two stages in the above algorithm, according to the size of $V(G_i)$. Let $S_1$ denote the set of vertices $u \in S$ that were added to $S$ in some Step $i$ of the algorithm where $|V(G_{i-1})|\geq 2 H(n,k,2t+1)$. Set $S_2 :=S \setminus S_1$.
So there is some $k$ such that, up to and including Step $k$, every vertex added to $S$ lies in $S_1$, and every vertex added to $S$ after Step~$k$ lies in $S_2$.
%, and will have $S=S_1\cup S_{2}$. For the first  stage we will apply Lemma~\ref{generalcodeserrorsupersat} and in the second stage we will apply Lemma~\ref{supersatforhammingcodes}.

By Lemma~\ref{cont2}, for every $i \leq k$, at Step~$i$ we remove at least an $\alpha$ proportion of the vertices from $G_{i-1}$ to obtain $G_i$.
Thus, $|S_1|=k$ and $(1-\alpha)^k |\mathcal Y|\leq 2H(n,k,2t+1)$. Note that \COMMENT{AT: I was originally worried that we needed  $k=o(n)$ for this... but think I was being stupid... can you just double check carefully please Adam} $\alpha \rightarrow 0$ as $n \rightarrow \infty$, so as $n$ is sufficiently large we have that $\alpha \leq 10 \log (1/(1-\alpha))$. 
Therefore,
\begin{equation*}
\begin{split}
|S_1| & \leq\frac{\log\left(\frac{|\mathcal{Y}|}{2H(n,k,2t+1)}\right)}{\log\left(\frac{1}{1-\alpha}\right)}\leq 10\frac{\log\left(\frac{(n-2k+2t)\ldots(n-2k+1)}{2(t!)^2}\right)}{\alpha}\leq 5000\frac{tH(n,k,2t+1)\log\binom{n}{t}}{k}\\
&\leq 10000\frac{t^2\log n}{k}H(n,k,2t+1)<\frac{\eps}{2}\frac{H(n,k,2t+1)}{t\log n}.
\end{split}
\end{equation*}
In the first inequality we used that $|\mathcal Y|=\binom{n}{k}\binom{n-k}{k}/2$ and
in the last inequality we use that $t\ll \sqrt[3]{\frac{ k}{\log^2 n}}$.

After Step $k$ we remove at least 
${\eps k}/{2t}$ vertices at each step, so we have 
$$|S_2|\leq \frac{2H(n,k,2t+1)}{\frac{\eps k}{2t}}=\frac{4t}{\eps k}H(n,k,2t+1)\leq \frac{\eps}{2} \frac{H(n,k,2t+1)}{t\log n}.$$
Hence,  $$|S|=|S_1|+|S_2|\leq \eps\frac{H(n,k,2t+1)}{t\log n},$$ as required. This finishes the proof of the claim.

\medskip

Define $\mathcal F$ to be the collection of all the sets $S \cup f(S)$ for every $S \in \binom{V(G)}{\leq \eps\frac{H(n,k,2t+1)}{t\log n}} $. Then (ii) clearly holds. Further,
\begin{equation*}
\begin{split}
|\mathcal{F}| & \leq\binom{\frac{1}{2}\binom{n}{k}\binom{n-k}{k}}{\leq\eps\frac{H(n,k,2t+1)}{t\log n}}\leq2^{2\eps\frac{H(n,k,2t+1)}{t\log n}\log \left(\binom{n}{t}^2t\log (n)/\eps\right)}\leq2^{10\eps\frac{H(n,k,2t+1)}{t\log n}(t\log n + \log t + \log \log n + \log\frac{1}{\eps})}\\
&\leq 2^{20\eps H(n,k,2t+1)},
\end{split}
\end{equation*}
and $|F| \leq (1+2 \eps )H (n,k,2t+1)$ for all $F \in \mathcal F$. Since $0<\eps <1$ was arbitrary, this proves the lemma.
\end{proof}

%%%%%%%%%%%%%%%%%%%%%%%%%%%%%%%%%%%%%%%%%%%%%%%%%
\section{A random version of Katona's intersection theorem}\label{sec:int}
A family $\mathcal A \subseteq \P(n)$ is \emph{$t$-intersecting} if $|A\cap B|\geq t$ for all $A,B \in \mathcal A$. In the case when $t=1$ we simply say that $\mathcal A$ is \emph{intersecting}.
Two of the most fundamental results in extremal set theory concern $t$-intersecting sets. 
The cornerstone theorem of Erd\H{o}s--Ko--Rado states that for every $k,t$ there exists an $n_0=n_0(k,t)$ such that if $n\geq n_0$ then the largest $t$-intersecting  $k$-uniform family is the trivial family, i.e., there is a $t$-element set which is contained in each of the sets.
The other fundamental theorem is Katona's intersection theorem \cite{katona}, which determines the size $K(n,t)$ of the largest $t$-intersecting (not necessarily uniform) family in $\mathcal{P}(n)$: it states that
$$K(n,t)=\begin{cases}
\binom{n}{\geq (n+t)/2}& \text{if $2|(n+t)$;}\\
2\binom{n-1}{\geq (n+t-1)/2} & \text{otherwise.}
\end{cases}$$
In the case when $n+t$ is even, $\binom{[n]}{\geq (n+t)/2}$ is a $t$-intersecting set of size $K(n,t)$. When $n+t$ is odd, $\binom{[n]}{\geq (n+t+1)/2} \cup  \binom{[n-1]}{(n+t-1)/2}$ is a $t$-intersecting set of size $K(n,t)$.
Notice that if $t=o(\sqrt{n})$ then $K(n,t)\sim 2^{n-1}$.

Beginning with the work of Balogh, Bohman and Mubayi~\cite{bbm}, the problem of
developing a 	`random' version of the Erd\H{o}s--Ko--Rado theorem has received significant attention (see~\cite{bbm, das, gauy, kahn1, kahn2}). In this section, we raise the analogous question for Katona's intersection theorem. More precisely,
let $\P(n, p)$ be the set obtained from $\P(n)$ by selecting elements randomly with probability $p$ and independently of all other choices. 
\begin{question}\label{ques1}
Suppose that $n \in \mathbb N$, $t=t(n)\in \mathbb N$ and write $K:=K(n,t)$. For which values of $p$ do we have that, with high probability, the largest $t$-intersecting family in $\P(n,p)$ has size $(1+o(1))pK$?
\end{question}
The model $\P(n,p)$ was first investigated by R\'enyi~\cite{renyi} who determined the probability threshold for the property that $\P(n,p)$ is not itself an antichain, thereby answering a question of Erd\H{o}s. 
More recently, a random version of Sperner's theorem for $\P(n, p)$ was obtained independently by Balogh, Mycroft and Treglown~\cite{sper} and by Collares Neto and  Morris~\cite{cm}.

In this section we give a precise answer to Question~\ref{ques1} in the case when $t=o(\sqrt{n})$. 
For intersecting families (i.e. for the $t=1$ case), this question has also been resolved independently by Mubayi and  Wang~\cite{dhruv}.
Clearly the conclusion of Question~\ref{ques1} is not satisfied if $p<C/2^{n}$ for any constant $C>0$. The next result implies that the conclusion of Question~\ref{ques1} is not satisfied for $p=2^{-\Omega (\sqrt{n}\log n)}$ and $t=o(\sqrt{n})$.
\begin{theorem}\label{katlower}
Let $p=2^{-\Omega (\sqrt{n}\log n)}$ where $p\geq \omega (n)/2^{n}$ for some function $\omega (n)\rightarrow \infty$ as $n \rightarrow \infty$, and let $t=o(\sqrt{n})$. Then there exists a constant $\eps>0$ such that, with high probability, the largest $t$-intersecting family in $\mathcal P(n,p)$ has size at least $(\frac{1}{2}+\eps)2^np$.
\end{theorem}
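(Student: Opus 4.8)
The strategy is a direct construction: we exhibit, whp, a $t$-intersecting subfamily of $\P(n,p)$ of size at least $(\tfrac12+\eps)2^np$ by taking a large ``nearly $t$-intersecting'' family and deleting a few sets. Fix a small constant $c>0$, to be chosen only at the very end, and set
$$\mathcal B:=\binom{[n]}{\ge n/2-c\sqrt n}.$$
By Facts~\ref{fact1} and~\ref{fact2} (equivalently, the central limit theorem for $\mathrm{Bin}(n,1/2)$) there is a constant $\delta=\delta(c)>0$ with $|\mathcal B|\ge(\tfrac12+\delta)2^n$ for all large $n$; in particular $|\mathcal B|$ exceeds $K(n,t)=(\tfrac12+o(1))2^n$ by a constant factor. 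Of course $\mathcal B$ is not $t$-intersecting, so call a pair $A,B\in\mathcal B$ \emph{bad} if $|A\cap B|\le t-1$. Writing $X:=|\mathcal B\cap\P(n,p)|$, we have $\mathbb EX=p|\mathcal B|\ge(\tfrac12+\delta)2^np\ge(\tfrac12+\delta)\w(n)\to\infty$, so a Chernoff bound gives $X\ge(\tfrac12+\tfrac\delta2)2^np$ whp.

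The key estimate is an upper bound on the number $N$ of ordered bad pairs in $\mathcal B$. If $A,B\in\mathcal B$ is bad then, since $|A|,|B|\ge n/2-c\sqrt n$ and $|A\cap B|\le t$, the complement of $A\cup B$ has size
$$|\overline{A\cup B}|=n-|A|-|B|+|A\cap B|\ \le\ 2c\sqrt n+t\ =\ (2c+o(1))\sqrt n,$$
using $t=o(\sqrt n)$. Splitting $[n]$ into the four parts $A\cap B$, $A\setminus B$, $B\setminus A$, $\overline{A\cup B}$ of sizes $i,a,b,c_0$, summing the corresponding multinomial coefficient over the admissible sizes, using $\sum_{a+b=m}\binom ma=2^m$ to absorb the split of $(A\cup B)\setminus(A\cap B)$ into $A\setminus B$ and $B\setminus A$, and using that $\binom nj2^{-j}$ is increasing for $j=O(\sqrt n)$, one obtains
$$N\ \le\ 2^{\,n}\cdot 2^{\,(c+o(1))\sqrt n\log n}.$$
Here the factor $2^n$ comes from the split above, the factor $2^{(c+o(1))\sqrt n\log n}$ is $\max_{j\le(2c+o(1))\sqrt n}\binom nj2^{-j}$ (choosing $\overline{A\cup B}$, and estimating via Fact~\ref{fact3}), and the number of choices for $i$ and for the set $A\cap B$ contributes only $\binom{n}{\le t}\le 2^{o(\sqrt n\log n)}$, which is absorbed since $t=o(\sqrt n)$.

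Now let $Y$ be the number of bad pairs surviving in $\mathcal B\cap\P(n,p)$, so $\mathbb EY\le p^2N$ and hence $\mathbb EY/(2^np)\le p\cdot 2^{(c+o(1))\sqrt n\log n}$. Since $p=2^{-\Omega(\sqrt n\log n)}$, there is a constant $A>0$ with $p\le 2^{-A\sqrt n\log n}$ for all large $n$; choosing $c:=A/2$ gives $\mathbb EY/(2^np)\le 2^{(-A/2+o(1))\sqrt n\log n}\to0$, so $\mathbb EY=o(2^np)$ and by Markov's inequality $Y\le\tfrac\delta42^np$ whp. On the intersection of the two whp events $X\ge(\tfrac12+\tfrac\delta2)2^np$ and $Y\le\tfrac\delta42^np$ (which occurs whp by a union bound), delete one set from each surviving bad pair, i.e.\ take a vertex cover of size at most $Y$ of the ``bad-pair'' graph on $\mathcal B\cap\P(n,p)$; what remains is a $t$-intersecting subfamily of $\P(n,p)$ of size at least $(\tfrac12+\tfrac\delta2)2^np-\tfrac\delta42^np=(\tfrac12+\tfrac\delta4)2^np$. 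Taking $\eps:=\delta/4>0$ finishes the proof.

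\textbf{Main obstacle.} The crux is the bad-pair count combined with the choice of $c$: one needs $\mathcal B$ to be a constant factor larger than $2^{n-1}$ (forcing $c>0$ and hence $\delta>0$), while simultaneously keeping $p^2\cdot N=o(p2^n)$, which forces $c$ to be smaller than the constant hidden in the exponent of $p=2^{-\Omega(\sqrt n\log n)}$. This balancing is exactly where the hypothesis on $p$ is used, and is precisely why the ``expected'' bound $(\tfrac12+o(1))2^np$ of Question~\ref{ques1} fails in this regime. The remaining tools --- Chernoff, Markov, a union bound --- are entirely routine.
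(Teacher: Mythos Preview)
Your argument is correct and takes a genuinely different route from the paper's proof.

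The paper proceeds constructively: it starts from the extremal $t$-intersecting family $\mathcal A_{\rm ex}=\binom{[n]}{\ge n/2+t/2}$, adjoins a carefully designed auxiliary family $\mathcal A$ of sets slightly below the middle layer (constrained so that $\mathcal A$ is itself $t$-intersecting via the condition $|A\cap[n/2]|\ge n/4+t/2$), bounds for each $A\in\mathcal A$ the number of sets $B\in\mathcal A_{\rm ex}$ with $|A\cap B|<t$, and then runs a second-moment (Chebyshev) argument to show that enough elements of $\mathcal A$ survive in $\P(n,p)$ compatible with $\mathcal A_{{\rm ex},p}$. The correlation analysis between the indicator variables $X_i$ is the technical heart of their proof.

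Your approach is a ``take-too-much-and-delete'' argument: the single family $\mathcal B=\binom{[n]}{\ge n/2-c\sqrt n}$ already has density $\tfrac12+\delta$, and the key structural observation is that any bad pair in $\mathcal B$ forces $|\overline{A\cup B}|\le(2c+o(1))\sqrt n$, which caps the total bad-pair count at $2^{n+(c+o(1))\sqrt n\log n}$. After that, only first-moment tools (Chernoff for $X$, Markov for $Y$) are needed. This avoids both the auxiliary family $\mathcal A$ and the second-moment calculation entirely, at the cost of the bad-pair enumeration---which is elementary. Your choice of $c=A/2$ tied to the exponent in $p\le 2^{-A\sqrt n\log n}$ is exactly the right balancing, and produces the same qualitative conclusion (with $\eps$ depending on the implicit constant in the $\Omega$).

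One minor presentational point: in your bad-pair estimate the phrase ``the factor $2^n$ comes from the split above'' could be expanded slightly---it is $2^{n-i-c_0}\le 2^n$ for the split of $(A\cup B)\setminus(A\cap B)$---but the bound as stated is correct.
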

\proof
The choice of $p$ and $t$ implies that there exists a constant $a>0$ such that $p<2^{-a\sqrt{n}\log n}$ and $t<\frac{a}{100}\sqrt{n}$ for $n$ sufficiently large. %We may additionally assume $a\ll 1$. 
Define $\eps$ so that $0<\eps \ll a$.

Let $\mathcal A$ denote the set of elements $A$ of $\mathcal P(n)$ that satisfy $n/2-a\sqrt{n}/2 \leq |A| \leq n/2-a\sqrt{n}/4$ and $|A\cap[n/2]|\geq n/4+t/2$. The latter condition implies that $\mathcal A$ is a $t$-intersecting family.
\begin{claim}\label{aclaim}
$|\mathcal A| \geq 4\eps 2^n.$
\end{claim}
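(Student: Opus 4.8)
The plan is to estimate $|\mathcal{A}|$ by summing over the allowed sizes $i$ with $n/2 - a\sqrt{n}/2 \leq i \leq n/2 - a\sqrt{n}/4$ and, for each such $i$, counting the sets $A$ of size $i$ with $|A \cap [n/2]| \geq n/4 + t/2$. Writing $A \cap [n/2]$ and $A \cap ([n]\setminus[n/2])$ separately, for a fixed size $i$ the number of such $A$ is $\sum_{j \geq n/4 + t/2} \binom{n/2}{j}\binom{n/2}{i-j}$. The key point is that since $i \leq n/2 - a\sqrt{n}/4$, the "balanced" split has both halves of size roughly $i/2 \leq n/4 - a\sqrt{n}/8$, so requiring $j \geq n/4 + t/2$ is only about $a\sqrt{n}/8 + t/2 = O(\sqrt{n})$ above the mean of the hypergeometric-type distribution governing how $\binom{n/2}{j}\binom{n/2}{i-j}$ is distributed over $j$; since $t = o(\sqrt{n})$ and the standard deviation of this split is also $\Theta(\sqrt{n})$, a constant proportion (bounded below by a function of $a$ alone) of the mass $\binom{n}{i}$ survives the constraint.

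Concretely, I would first fix $i$ in the range and use Fact~\ref{fact2}: each $\binom{n}{i}$ with $i = n/2 - c\sqrt{n}/2$, $c \in [a/2, a]$, satisfies $\binom{n}{i} \sim e^{-c^2/2}\binom{n}{n/2} \geq e^{-a^2/2}\binom{n}{n/2}$. Summing over the $\sim a\sqrt{n}/4$ integer values of $i$ in the range gives $\sum_i \binom{n}{i} \geq c_1(a) \sqrt{n} \binom{n}{n/2} \geq c_2(a) 2^n$ by Fact~\ref{fact1}, for suitable constants $c_1, c_2$ depending only on $a$. Next, for the split constraint: for a uniformly random $A \in \binom{[n]}{i}$, the random variable $|A \cap [n/2]|$ is hypergeometric with mean $i/2$ and variance $\Theta(n)$; since $i/2 \leq n/4 - a\sqrt{n}/8$, we have $n/4 + t/2 - i/2 \geq a\sqrt{n}/8 - t/2$, which is at most $O(\sqrt{n})$ and hence within $O(1)$ standard deviations of the mean. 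A central-limit / local-limit estimate (or an explicit Chernoff-type two-sided bound showing the hypergeometric does not concentrate too tightly) shows $\mathbb{P}[|A\cap[n/2]| \geq n/4 + t/2] \geq c_3(a) > 0$ for $n$ large, where $c_3$ depends only on $a$ (using $t = o(\sqrt{n})$ so that $t/2$ is negligible compared to $a\sqrt{n}/8$). Combining, $|\mathcal{A}| \geq c_3(a) \cdot c_2(a) 2^n$, and since $\eps$ was chosen with $0 < \eps \ll a$ we can ensure $c_3(a)c_2(a) \geq 4\eps$.

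The main obstacle is making the "a constant fraction of sets of size $i$ have $|A \cap [n/2]|$ a bit above average" step rigorous with a clean, self-contained bound rather than invoking a local limit theorem. I expect the cleanest route is to compare with the symmetric case: by symmetry $\mathbb{P}[|A \cap [n/2]| \geq i/2] \geq 1/2$, and then show that shifting the threshold up by $O(\sqrt{n})$ costs only a constant factor — e.g. by noting $\mathbb{P}[|A\cap[n/2]| \in [i/2, i/2 + a\sqrt{n}/8 + t/2]]$ is bounded by a constant (each individual term $\binom{n/2}{j}\binom{n/2}{i-j}/\binom{n}{i}$ is $O(1/\sqrt{n})$ and there are $O(\sqrt{n})$ of them, but one needs a matching lower bound on the tail past the threshold), so a constant fraction of the lower-half mass lies above $n/4 + t/2$. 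Alternatively one can directly lower-bound $\sum_{j \geq n/4+t/2}\binom{n/2}{j}\binom{n/2}{i-j}$ by restricting to $j$ in a window $[n/4 + t/2, \, n/4 + t/2 + \sqrt{n}]$ and applying Facts~\ref{fact1} and \ref{fact2} to each binomial factor, which is more calculation but entirely elementary. Either way the constants are generous because $a$ is a fixed constant and $\eps$ is taken far smaller, so no delicate optimization is needed.
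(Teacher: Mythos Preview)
Your proposal is correct and is essentially the paper's approach: the paper writes $|\mathcal A|$ as the double sum $\sum_{s}\sum_{k}\binom{n/2}{n/4+t/2+k}\binom{n/2}{n/4-t/2-s-k}$ (which is your decomposition by $i=n/2-s$ and $j=n/4+t/2+k$), then does exactly the ``alternative'' you describe at the end --- restrict $k$ to a window of length $\Theta(\sqrt{n})$, lower-bound each factor by $\binom{n/2}{n/4\pm 2a\sqrt{n}}$, and apply Facts~\ref{fact1} and~\ref{fact2} to get a constant (depending on $a$) times $2^n$, which exceeds $4\eps 2^n$ since $\eps\ll a$. Your hypergeometric framing is a nice conceptual wrapper, but the rigorous execution you sketch coincides with the paper's computation.
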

The claim holds since
\begin{equation*}
\begin{split}
|\mathcal{A}|&=\sum_{s=a\sqrt{n}/4}^{a\sqrt{n}/2}\sum_{k=0} ^{n/4-t/2-s} \binom{n/2}{n/4+t/2+k}\binom{n/2}{n/4-t/2-s-k}\\
& \geq \frac{a\sqrt{n}}{4} \sum_{k=0} ^{a\sqrt{n}} \binom{n/2}{n/4+t/2+k}\binom{n/2}{n/4-t/2-a\sqrt{n}/2-k} \\
& \geq \frac{a\sqrt{n}}{4}  \cdot a\sqrt{n} \binom{n/2}{n/4+2a\sqrt{n}}\binom{n/2}{n/4-2a\sqrt{n}} \geq 4\eps 2^n,
\end{split}
\end{equation*}
where the last inequality follows by applying Facts~\ref{fact1} and~\ref{fact2} and since $\eps \ll a$.

\smallskip

Write $\mathcal A_{\rm ex}:=\binom{[n]}{\geq n/2+t/2}$. So $\mathcal A_{\rm ex}$ is a $t$-intersecting set.
Since $t =o(\sqrt{n})$ note that $|\mathcal A_{\rm ex}|\geq (1/2-\eps/2)2^n$. As $p\geq \omega(n)/2^n$,
by the Chernoff bound for the binomial distribution, we have that, with high probability, 
$\P(n, p)$ contains at least $(1/2-\eps)p2^n$ elements from $\mathcal A_{\rm ex}$. Denote this set by $\mathcal A_{{\rm ex},p}$. 
We will show that, with high probability, we can add a significant number of elements from $\mathcal A$ to $\mathcal A_{{\rm ex},p}$ to obtain a $t$-intersecting set in $\P(n,p)$ of size at least $(\frac{1}{2}+\eps)2^np$.

Consider any $A \in \mathcal A$. The number of elements $B \in \binom{[n]}{\geq n/2}$ with $|A\cap B| < t$ is 
\begin{align}
\sum _{k=0} ^{t-1} \binom{|A|}{k} \binom{n-|A|}{\geq n/2-k} & \leq  \binom{|A|}{t-1} \sum _{k=0} ^{t-1}  \binom{n-|A|}{\geq n/2-k} 
 \leq  \binom{n/2-a\sqrt{n}/4}{t-1} \sum _{k=0} ^{t-1}  \binom{n/2+a\sqrt{n}/2}{\geq n/2-k} \nonumber  \\ & =  \binom{n/2-a\sqrt{n}/4}{t-1} \sum _{k=0} ^{t-1}  \binom{n/2+a\sqrt{n}/2}{\leq a\sqrt{n}/2+k} \nonumber
\\ & \leq 2 \binom{n/2-a\sqrt{n}/4}{t-1}  \binom{n/2+a\sqrt{n}/2}{ a\sqrt{n}/2+t}. \nonumber
\end{align} 
Further, 
$$ 2 \binom{n/2-a\sqrt{n}/4}{t-1}  \binom{n/2+a\sqrt{n}/2}{ a\sqrt{n}/2+t} \leq n^t \left(\frac{3\sqrt{n}}{a}\right)^{0.55a\sqrt{n}}\leq 2^{0.6a\sqrt{n}\log n},$$
where in the first inequality we use that $t < a\sqrt{n}/100$ and apply Fact~\ref{fact3}.

Let $\mathcal A_p$ denote the set of elements $A \in \mathcal A$ that lie in $\P(n,p)$ and where $\mathcal A_{{\rm ex},p}\cup \{A\}$ is a $t$-intersecting set.
Thus, the probability that $A\in \mathcal A$ lies in $\mathcal A_p$ is at least
$$p(1-p) ^{2^{0.6a\sqrt{n}\log n}}.$$
By $X$ denote the size of the family $\mathcal A_p$. 
By Claim~\ref{aclaim}, 
$$\mathbb E(X) \geq 4\eps  2^n p (1-p)^{2^{0.6a\sqrt{n}\log{n}}}\geq 4\eps p2^{n}(1-p2^{0.6a\sqrt{n}\log{n}})\geq 4\eps p2^{n}(1-2^{-0.4a\sqrt{n}\log{n}})\geq 3\eps p2^{n},$$ where the last inequality follows since $n$ is sufficiently large.

Write $\mathcal A=\{A_1,\dots, A_m\}$ and $X=\sum _{i=1} ^m X_i$ where $X_i=1$ if $A_i \in \mathcal A_p$ and $X_i=0$ otherwise. 
Note that the random variables $X_i,X_j$ are not independent if and only if there is some $B \in \mathcal A_{\rm ex}$ such that $|B \cap A_i|,|B\cap A_j|<t$. In this case, $|B|\geq n/2$ and so $|A_i \cup A_j|\leq n/2+2t \leq n/2 +a \sqrt{n}/50.$
Further, $|A_i|,|A_j| \geq n/2 -a\sqrt{n}/2$ and thus $|A_i \setminus A_j|, |A_j \setminus A_i|\leq a \sqrt{n}$. 
So given a fixed $i$, the number of $X_j$s that are not independent with $X_i$ is at most 
$$\binom{n}{\leq a \sqrt{n}}\binom{|A_i|}{\leq a \sqrt{n}}\leq 2 \binom{n}{ a \sqrt{n}}^2 \leq 2\left ( \frac{e \sqrt{n}}{a} \right )^ {2a \sqrt{n}} < 2^ {10 a \sqrt{n} \log n}.$$
Write $i \sim j$ to mean that $X_i$ and $X_j$ are not independent.
By abusing notation  let us also write $A_i$ to denote the event that $A_i \in \mathcal A_p$.
Consider
$$\Delta := \sum _{i \sim j} \mathbb P (A_i  \mathcal  \cap A_j) .$$
For $A_i,A_j \in \mathcal A_p$ we require that $A_i,A_j \in \mathcal P(n,p)$ and so   $\mathbb P (A_i  \mathcal  \cap A_j) \leq p^2.$
Therefore,
$$\Delta \leq \sum _{i=1} ^{m} 2^ {10 a \sqrt{n} \log n} p^2 \leq 2 ^{n} 2^ {10 a \sqrt{n} \log n} p^2.$$
In particular,   $\Delta =o(\mathbb E(X)^2)$. Thus, by applying Corollary 4.3.4 from~\cite{alon} (Chebyshev's inequality) we have that, with high probability, $X \geq 2\eps p2^{n}$.

Note that $\mathcal A_p \cup \mathcal A_{{\rm ex}, p}$ is a $t$-intersecting set in $\P(n,p)$ and, with high probability, it has size at least  $(1/2+\eps)p2^n$, as required.
\endproof
By arguing precisely as in the proof of Theorem~\ref{katlower} we in fact obtain the following result for $t=O(\sqrt{n})$.
\begin{theorem}\label{katlower2}
Given any constant $C >0$, there is a constant $\eps>0$ such that the following holds.
Let $p<2^{-100C\sqrt{n}\log n}$ where $p\geq \omega (n)/2^{n}$ for some function $\omega (n)\rightarrow \infty$ as $n \rightarrow \infty$, and let $t\leq C\sqrt{n}$. Write $K:=K(n,t)$. Then there exists a constant $\eps>0$ such that, with high probability, the largest $t$-intersecting family in $\mathcal P(n,p)$ has size at least $(1+\eps)pK$.
\end{theorem}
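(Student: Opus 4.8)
The plan is to run the argument of Theorem~\ref{katlower} essentially verbatim, but with the numerical slack tracked in terms of the constant $C$ rather than the absolute constant $a$ that arose from the hypothesis $p < 2^{-a\sqrt n \log n}$ there. First I would fix $C$, then choose the auxiliary constant $\eps$ and a scale parameter $a$ with $0 < \eps \ll 1/C$ and $a = 100C$ (so that $p < 2^{-a\sqrt n \log n}$ and $t \le C\sqrt n = a\sqrt n/100$, matching the two inequalities used throughout the proof of Theorem~\ref{katlower}). The only structural difference is that now $t$ need not be $o(\sqrt n)$, so $K = K(n,t)$ is no longer $\sim 2^{n-1}$; the conclusion must be phrased as $(1+\eps)pK$ rather than $(\tfrac12+\eps)2^np$.

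The core of the argument is unchanged. Define $\mathcal A_{\rm ex}$ to be the canonical extremal $t$-intersecting family of size $K(n,t)$ (namely $\binom{[n]}{\ge (n+t)/2}$ when $n+t$ is even, and $\binom{[n]}{\ge (n+t+1)/2}\cup\binom{[n-1]}{(n+t-1)/2}$ otherwise), and define $\mathcal A$ to be the family of sets $A$ with $n/2 - a\sqrt n/2 \le |A| \le n/2 - a\sqrt n/4$ and $|A\cap[n/2]| \ge n/4 + t/2$; the second condition forces $\mathcal A$ to be $t$-intersecting and to be "disjoint in layer" from $\mathcal A_{\rm ex}$ in the relevant sense. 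Exactly as in Claim~\ref{aclaim}, using Facts~\ref{fact1} and~\ref{fact2} and $t \le C\sqrt n$, one gets $|\mathcal A| \ge 4\eps 2^n$ — the computation there only used $t = O(\sqrt n)$, which we still have. By Chernoff, with high probability $\P(n,p)$ contains at least $(1-\eps/2)pK$ elements of $\mathcal A_{\rm ex}$ (here we use $p \ge \omega(n)/2^n$); call this $\mathcal A_{{\rm ex},p}$. Then, as before, for each $A \in \mathcal A$ the number of $B \in \binom{[n]}{\ge n/2}$ with $|A\cap B| < t$ is at most $2^{0.6a\sqrt n \log n}$ (the bound $2\binom{n/2 - a\sqrt n/4}{t-1}\binom{n/2+a\sqrt n/2}{a\sqrt n/2 + t} \le n^t(3\sqrt n/a)^{0.55a\sqrt n} \le 2^{0.6a\sqrt n\log n}$ uses precisely $t < a\sqrt n/100$ and Fact~\ref{fact3}). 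Letting $\mathcal A_p$ be the set of $A \in \mathcal A \cap \P(n,p)$ for which $\mathcal A_{{\rm ex},p}\cup\{A\}$ remains $t$-intersecting, one has $\mathbb P[A \in \mathcal A_p] \ge p(1-p)^{2^{0.6a\sqrt n\log n}} \ge p(1 - 2^{-0.4a\sqrt n\log n})$, so $\mathbb E|\mathcal A_p| \ge 3\eps p 2^n$.

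Finally I would run the same second-moment (Chebyshev) computation: $X_i, X_j$ are dependent only if some $B \in \mathcal A_{\rm ex}$ has $|B\cap A_i|, |B\cap A_j| < t$, which forces $|A_i\cup A_j| \le n/2 + 2t \le n/2 + a\sqrt n/50$ and hence $|A_i\setminus A_j|, |A_j\setminus A_i| \le a\sqrt n$; so each $i$ has at most $2\binom{n}{a\sqrt n}^2 < 2^{10a\sqrt n\log n}$ dependent partners, giving $\Delta \le 2^n 2^{10a\sqrt n\log n}p^2 = o(\mathbb E(X)^2)$ since $p < 2^{-100C\sqrt n\log n} = 2^{-a\sqrt n\log n}$. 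Corollary 4.3.4 of~\cite{alon} then yields $|\mathcal A_p| \ge 2\eps p2^n$ with high probability, and $\mathcal A_p \cup \mathcal A_{{\rm ex},p}$ is a $t$-intersecting family in $\P(n,p)$ of size at least $(1-\eps/2)pK + 2\eps p 2^n \ge (1+\eps)pK$, where the last step uses $K \le 2^n$ (so $2\eps p 2^n \ge 2\eps pK \ge (3\eps/2)pK$). I expect the only genuine point needing care — and hence the "main obstacle" — is verifying that all the inequalities in Claim~\ref{aclaim} and in the $2^{0.6a\sqrt n\log n}$ and $\Delta = o(\mathbb E(X)^2)$ estimates survive with $a = 100C$ in place of the implicitly-small $a$ of Theorem~\ref{katlower}; since those estimates were already written with an explicit $a$ and only invoked $t < a\sqrt n/100$, $p < 2^{-a\sqrt n\log n}$, and $t = O(\sqrt n)$, this is routine bookkeeping rather than a new idea, which is exactly why the theorem can be asserted "by arguing precisely as in the proof of Theorem~\ref{katlower}."
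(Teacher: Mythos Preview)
Your proposal is correct and matches the paper's approach exactly; the paper itself offers no separate proof but simply asserts that Theorem~\ref{katlower2} follows ``by arguing precisely as in the proof of Theorem~\ref{katlower}'', and you have faithfully reproduced that argument with $a=100C$ so that both hypotheses $p<2^{-a\sqrt{n}\log n}$ and $t<a\sqrt{n}/100$ are in force. One small caveat: your stated hierarchy $0<\eps\ll 1/C$ is not quite strong enough, since the last inequality in Claim~\ref{aclaim} (via Facts~\ref{fact1} and~\ref{fact2}) actually forces $\eps$ to be at most a constant times $a^2 e^{-32a^2}$, which for $a=100C$ is far smaller than $1/C$; but as you correctly note this is pure bookkeeping and some positive $\eps=\eps(C)$ does exist, which is all the theorem claims.
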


%%%% mention rodl counting result? exact result sensible?

The following result together with Theorem~\ref{katlower} resolves Question~\ref{ques1} for $t=o(\sqrt{n})$.

\begin{thm}
If $p=2^{-o(\sqrt{n}\log n)}$ and $t=o(\sqrt{n})$ then with high probability the largest $t$-intersecting family in $\mathcal P(n,p)$ has size $(\frac{1}{2}+o(1))2^np$.
\end{thm}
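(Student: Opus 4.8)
The overall strategy mirrors the container arguments of Sections~\ref{sec:tilt}--\ref{sec:code}. Fix a small constant $\eps>0$ and let $G$ be the graph on vertex set $\P(n)$ in which distinct $A,B$ are adjacent when $|A\cap B|<t$; a $t$-intersecting family is precisely an independent set of $G$, and by Katona's theorem $G$ has independence number $K=K(n,t)=(\tfrac12+o(1))2^n$, since $t=o(\sqrt n)$ (combine the formula for $K(n,t)$ with Facts~\ref{fact1} and~\ref{fact2}). The lower bound of the theorem is immediate: if $\mathcal A_{\rm ex}$ is an extremal $t$-intersecting family then $|\mathcal A_{\rm ex}|=(\tfrac12+o(1))2^n$, and since $p=2^{-o(\sqrt n\log n)}$ forces $p2^n=2^{(1-o(1))n}\to\infty$, the Chernoff bound shows that whp $\mathcal A_{\rm ex}\cap\P(n,p)$ is a $t$-intersecting family of size $(\tfrac12+o(1))2^np$. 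So everything reduces to the matching upper bound.

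The engine is a supersaturated version of Katona's theorem: for every $\eps>0$ there are $\delta=\delta(\eps)>0$ and $n_0$ such that if $n\ge n_0$, $1\le t=o(\sqrt n)$ and $\mathcal A\subseteq\P(n)$ has $|\mathcal A|\ge(\tfrac12+\eps)2^n$, then $\mathcal A$ contains at least $2^{\,n+\delta\sqrt n\log n}$ disjoint pairs $A,B$ (hence $e(G[\mathcal A])\ge 2^{\,n+\delta\sqrt n\log n}$, so some vertex has degree at least $2^{\delta\sqrt n\log n}$ inside $\mathcal A$). To prove this, for each $0\le j\le J:=\delta(\eps)\sqrt n$ and each $Z\in\binom{[n]}{n-j}$ use the elementary antipodal argument inside the Boolean lattice $\P(Z)$: the number of ordered pairs $(A,Z\setminus A)$ with both parts in $\mathcal A$ is at least $(2\rho_Z-1)2^{n-j}$, where $\rho_Z=|\mathcal A\cap\P(Z)|2^{-(n-j)}$, and each such pair is a disjoint pair of union $Z$. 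Averaging over $Z$ of size $n-j$ and using $\sum_{A\in\mathcal A}\binom{n-|A|}{j}\ge(\tfrac12+\tfrac\eps2)\binom nj2^{n-j}$ — the left side is minimised, over families of size $(\tfrac12+\eps)2^n$, by taking $\mathcal A$ to consist of the largest sets, for which the inequality is a direct computation when $j=o(\sqrt n)$ — gives at least $\eps\binom nj2^{n-j}$ disjoint pairs of union size $n-j$. Summing over $j\le J$ and retaining the largest term $\eps\binom nJ2^{n-J}=2^{\,n+\Omega_\eps(\sqrt n\log n)}$ yields the claim. I expect this lemma, and specifically securing the factor $2^{\Omega_\eps(\sqrt n\log n)}$ rather than a weaker $2^{\Omega_\eps(\sqrt n)}$, to be the main obstacle, since only the extra $\log n$ in the exponent lets the argument cover the whole range $p=2^{-o(\sqrt n\log n)}$.

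Given the supersaturation lemma, the container lemma follows the familiar pattern. Run the Kleitman--Winston algorithm of Section~\ref{sec:over} on $G$ with parameter $\Delta:=2^{\delta(\eps)\sqrt n\log n}$, producing for each $S\in\binom{V(G)}{\le 2^n/\Delta}$ a container $S\cup f(S)$; every $t$-intersecting family lies in one of them, and since at termination the surviving graph has maximum degree below $\Delta$, the contrapositive of the supersaturation lemma forces each container to have size at most $(\tfrac12+\eps)2^n$. The number of containers is $\binom{2^n}{\le 2^n/\Delta}\le 2^{(2^n/\Delta)(n+\log_2\Delta)}=2^{o(p2^n)}$, because $\Delta=2^{\delta(\eps)\sqrt n\log n}$ while $\log_2(1/p)=o(\sqrt n\log n)<\tfrac12\delta(\eps)\sqrt n\log n$ for $n$ large.

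It remains to intersect with $\P(n,p)$. For a fixed container $F$ we have $\mathbb E|F\cap\P(n,p)|=p|F|\le(\tfrac12+\eps)p2^n$, so by the Chernoff bound $\Pr[\,|F\cap\P(n,p)|>(\tfrac12+2\eps)p2^n\,]\le \exp(-\tfrac14\eps^2p2^n)$. A union bound over the $2^{o(p2^n)}$ containers shows that whp every container satisfies $|F\cap\P(n,p)|\le(\tfrac12+2\eps)p2^n$. Since every $t$-intersecting family in $\P(n,p)$ lies in some container, whp the largest one has at most $(\tfrac12+2\eps)p2^n$ members; together with the lower bound above and the fact that $\eps>0$ was arbitrary, whp the largest $t$-intersecting family in $\P(n,p)$ has size $(\tfrac12+o(1))2^np$. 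Combined with Theorem~\ref{katlower}, this settles Question~\ref{ques1} whenever $t=o(\sqrt n)$.
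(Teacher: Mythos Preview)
Your argument is correct and its overall architecture---container lemma via Kleitman--Winston with parameter $2^{\Theta_\eps(\sqrt n\log n)}$, then Chernoff plus a union bound---matches the paper exactly. The one substantive difference is in how the supersaturation step is obtained.

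The paper first reduces to $t=1$ (any $t$-intersecting family is intersecting), takes the graph with edges being \emph{disjoint} pairs, and then invokes a theorem of Frankl and Ahlswede stating that among all families of a given size, the number of disjoint pairs is minimised by the family consisting of the top layers of $\P(n)$. From this black box the bound $e(G[\mathcal F])\ge 2^{\,n+\Omega_\eps(\sqrt n\log n)}$ drops out by a direct degree estimate in the bottom few layers. You instead give a self-contained averaging argument: for each $Z$ of size $n-j$ the number of complementary pairs in $\P(Z)\cap\mathcal A$ is at least $(2\rho_Z-1)2^{n-j}$, and summing over $Z$ (via $\sum_Z\rho_Z 2^{n-j}=\sum_{A\in\mathcal A}\binom{n-|A|}{j}$, minimised when $\mathcal A$ consists of the largest sets) yields $\ge\eps\binom{n}{j}2^{n-j}$ disjoint pairs with union of size $n-j$; taking $j=J=\delta\sqrt n$ produces the required $2^{\,n+\Omega_\eps(\sqrt n\log n)}$. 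Your route avoids the Frankl--Ahlswede result and is more elementary, at the price of a slightly longer computation. One small slip: you write ``direct computation when $j=o(\sqrt n)$'' but actually take $j$ up to $\delta\sqrt n$; the calculation still goes through provided $\delta$ is chosen small enough in terms of $\eps$, which is what you want anyway, so just replace that phrase by ``for $\delta\ll\eps$''.
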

\begin{proof}

Note that for this range of $t$,  with high probability, the size of the largest $t$-intersecting family in $\mathcal P(n,p)$ is at least $(\frac{1}{2}+o(1))2^np$. Hence to prove the theorem, it suffices to show  that the largest \emph{intersecting} family  in $\mathcal P(n,p)$ has size at most $(\frac{1}{2}+o(1))2^np$. That is, it suffices to prove the upper bound in the theorem for $t=1$, since for any $t\geq 2$ any $t$-intersecting family is also $1$-intersecting (i.e. intersecting).

 Fix any $\delta>0$ and define $0<\eps \ll \gamma \ll \delta$. We will show that with high probability the largest intersecting family in $\mathcal P(n,p)$ has size at most $(\frac{1}{2}+\delta)2^np$.

The first step in the proof is to create a collection of \emph{containers} that house all intersecting families. Define the graph $G$ on vertex set $\mathcal{P}(n)$ where distinct $A,B$ are adjacent in $G$ precisely if $A\cap B=\emptyset$. In order to bound the size of the containers, we require a supersaturation result. 
%Given a graph $H$, write $\Delta(H)$ for its maximum degree.

\begin{claim}\label{cd}
 If $\mathcal{F}\subseteq \mathcal{P}(n)$ where $|\mathcal{F}|\geq \sum_{k=\frac{n}{2}-C\sqrt{n}}^n \binom{n}{k}$ for some constant $C>0$, then 
 $e(G[\mathcal{F}])\geq 2^{n+\frac{C}{20}\sqrt{n}\log n}$ and so
$\Delta(G[\mathcal{F}])\geq 2^{\frac{C}{20}\sqrt{n}\log n}$.
\end{claim}
A result of Frankl~\cite{frank} and Ahlswede~\cite{ash} implies that,
 given $|\mathcal{F}|$, the number of edges in $G[\mathcal{F}]$ is minimised if $\mathcal{F}$ consists of the top layers of $\mathcal P(n)$, and possibly one partial layer. That is, there are no $A,B\subseteq [n]$ with $|A|<|B|$ and $A\in\mathcal{F}$ but $B\notin \mathcal{F}$. 

Hence we may assume that $\mathcal{F}$ consists of the top $\frac{n}{2}+C\sqrt{n}+1$ layers of $\mathcal{P}(n)$. We will estimate the degrees of vertices in the lowest $C\sqrt{n}/2$ layers of $\mathcal{F}$. The total number of vertices in these layers is at least $\delta_1 2^n$, where $\delta_1>0$ is a constant dependent only on $C$. The degree of each vertex $v$ in these layers is bounded below by $$\deg _{G[\mathcal F]}(v)\geq \binom{n/2+(C/2)\sqrt{n}}{n/2-(C/2)\sqrt{n}}\geq 2^{(C \sqrt{n}\log{n})/10}.$$
Thus, the number of edges in $G[\mathcal{F}]$ is at least $\delta_1 2^n 2^{(C\sqrt{n}\log n)/10}/2\geq 2^{n+\frac{C}{20}\sqrt{n}\log n}$, thereby proving the claim.

%So if $\mathcal{F}\subset\mathcal{P}(n)$ of size $|\mathcal{F}|\geq  \sum_{k=\frac{n}{2}-C\sqrt{n}}^n \binom{n}{k}$ for some constant $C>0$, then the maximum degree of $G[\mathcal{F}]$ is at least  $\Delta(G[\mathcal{F}])\geq 2^{n+\frac{C}{20}\sqrt{n}\log{n}}/2^n=2^{\frac{C}{20}\sqrt{n}\log{n}}$. 

\medskip

By applying the graph container algorithm to $G$ with parameter $2^{\eps \sqrt{n} \log n}$, Claim~\ref{cd} implies that there is a function $f: \binom{V(G)}{\leq 2^{n-\eps \sqrt{n} \log n}} \rightarrow \binom{V(G)}{\leq (1/2+\gamma )2^n}$
such that, for any independent set $I$ in $G$, there is a subset $S \subseteq I$ where $S \in \binom{V(G)}{\leq 2^{n-\eps \sqrt{n} \log n}}$ and $I \subseteq S\cup f(S)$. Note  here we used that
$$ \sum_{k=\frac{n}{2}-20\epsilon\sqrt{n}}^n \binom{n}{k}\leq \left(\frac{1}{2}+\gamma\right)2^n.$$
Let $\mathcal F$ be the collection of all sets $S \cup f(S)$ for all $S \in \binom{V(G)}{\leq 2^{n-\eps \sqrt{n} \log n}}$. So  $|F|\leq (1/2+2\gamma )2^n$ for every $F \in \mathcal F$.
Further, Fact~\ref{fact3} implies that
\begin{align}
\log|\mathcal{F}|& \leq\log\left(\sum_{a\leq2^{n-\epsilon\sqrt{n}\log{n}}}\binom{2^n}{a} \right)\leq  \log \left(2\left(\frac{e2^n}{2^{n-\eps\sqrt{n}\log n}}\right)^{2^{n-\eps\sqrt{n}\log n}}\right) \nonumber\\ & 
\leq  n2^{n-\eps\sqrt{n}\log n}\leq 2^{n-\frac{\eps}{2}\sqrt{n}\log n}.\label{eq11}
\end{align}
Given any $F\in \mathcal F$, by the Chernoff bound for the binomial distribution we have that
\begin{align}\label{eq12}\mathbb P\bigl(|F \cap \mathcal P(n,p)|\geq (1/2+4 \gamma )2^n p\bigr) \leq 2 e^{-\gamma ^2 2^n p/2}.
\end{align}
Thus, (\ref{eq11}), (\ref{eq12}) and the choice of $p$ imply that with high probability $|F \cap \mathcal P(n,p)|\leq (1/2+\delta)2^n p$ for all $F \in \mathcal F$. Since every intersecting family in $\mathcal P(n)$ lies in some $F \in \mathcal F$, the 
theorem now follows.
\end{proof}

%[Note: it is crucial that we are in the $t=1$ case. For $t=1$ this was proved by Frankl and Ahlswede. A proof of this claim can be found in ``Set Systems with few Disjoint Pairs" -- Bela Bollobas, Imre Leader.]

%[TO DO: add references, polish up, $t=O(\sqrt{n})$]

%-------------------------------------------------------------------------------------%
%-------------------------------------------------------------------------------------%
%-------------------------------------------------------------------------------------%
%-------------------------------------------------------------------------------------%
%-------------------------------------------------------------------------------------%
%-------------------------------------------------------------------------------------%
%-------------------------------------------------------------------------------------%
%-------------------------------------------------------------------------------------%

%%%%%%%%%%%%%%
\section{Sperner's theorem revisited}
\subsection{Counting antichains in $\mathcal{P}(n)$}\label{seca}
  Sperner's theorem \cite{sperner}  states that the largest antichain  in $ \mathcal{P}(n)$ has size $\binom{n}{\lfloor n/2\rfloor}$. It was Dedekind \cite{dedekindsperner} in 1897 who first attempted to find the total number $A(n)$ of distinct antichains in $\mathcal{P}(n)$. %The exact value of $A(n)$ has been determined for some small values of $n$ (see e.g. \cite{dedekindsperner, churchsperner, wardsperner}). 

Since every subset of an antichain is an antichain itself, it follows that  $2^{\binom{n}{\lfloor n/2\rfloor}}\leq A(n).$ 
%Gilbert \cite{gilbertsperner} showed that $A(n)\leq n^{\binom{n}{\lfloor n/2\rfloor}+2}$. Setting $\alpha(n):=\frac{\log_2 A(n)}{\binom{n}{\lfloor n/2\rfloor}}$, this is equivalent to saying $1\leq \alpha(n)\leq \log n$. Later Korobkov \cite{korobkovsperner} and Hansel \cite{hanselsperner} showed that $\alpha(n)\leq 4.23$ and $\alpha(n)\leq \log_23$ respectively. It was Kleitman \cite{kleitmansperner} who first found the asymptotic value of $\alpha(n)$:
The following result of Kleitman determines $A(n)$ up to an error term in the exponent.

\begin{thm}[Kleitman~\cite{kleitmansperner}]\label{numberofsperner}
The number of antichains in $\mathcal{P}(n)$ is $2^{\binom{n}{\lfloor n/2\rfloor}(1+o(1))}.$
\end{thm}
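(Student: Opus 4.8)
The plan is to follow the template of Theorem~\ref{tilt}: prove a supersaturation strengthening of Sperner's theorem and feed it into the Kleitman--Winston algorithm to obtain a container family satisfying (i)--(iii) of Section~\ref{sec:over}, which gives the count immediately. The lower bound $2^{\binom{n}{\lfloor n/2\rfloor}}\le A(n)$ is trivial, since every subfamily of the middle layer is an antichain. For the upper bound, let $G$ be the graph on $\mathcal P(n)$ in which distinct $A,B$ are adjacent iff they are comparable; then antichains are exactly the independent sets of $G$, $|I_{\max}|=\binom{n}{\lfloor n/2\rfloor}$, and a family $\mathcal F$ as in (i)--(iii) yields $A(n)\le|\mathcal F|\cdot 2^{\max_{F\in\mathcal F}|F|}=2^{o(\binom{n}{n/2})}\cdot 2^{(1+o(1))\binom{n}{n/2}}$.

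The key input is the following supersaturation statement: for every $\eps>0$ there exist $\delta>0$ and $n_0$ such that if $n\ge n_0$ and $\mathcal A\subseteq\mathcal P(n)$ with $|\mathcal A|\ge\binom{n}{n/2}+x$, then $\mathcal A$ contains at least $\delta n x$ comparable pairs. As in the proof of Lemma~\ref{supertilt}, one averages over a uniformly random maximal chain $\emptyset=C_0\subsetneq\dots\subsetneq C_n=[n]$: if $X$ counts its members lying in $\mathcal A$ then $\mathbb EX=\sum_{A\in\mathcal A}1/\binom{n}{|A|}\ge|\mathcal A|/\binom{n}{n/2}$, so $\sum_{\pi\in S_n}\binom{X_\pi}{2}\ge\sum_{\pi}(X_\pi-1)\ge x\,n!/\binom{n}{n/2}$, while $\sum_{\pi}\binom{X_\pi}{2}=\sum_{A\subsetneq B,\ A,B\in\mathcal A}|A|!(|B|-|A|)!(n-|B|)!$. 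The point requiring care is dividing out by a good upper bound on the summands: writing $|A|!(|B|-|A|)!(n-|B|)!=n!/(\binom{n}{|A|}\binom{n-|A|}{|B|-|A|})$, for a comparable pair near the middle levels the factor $\binom{n-|A|}{|B|-|A|}\ge n-|A|$ (using $|B|-|A|\ge1$) makes this a factor $\sim n$ smaller than the count $(n/2)!^2=n!/\binom{n}{n/2}$ for a single middle-layer set, and it is this factor $n$ that must be exhibited. Alternatively the lemma follows from the classical fact that the minimum number of comparable pairs in an $m$-set family is attained by a union of central levels plus one partial level --- for which each set of the partial level has $\approx n/2$ neighbours in an adjacent full level, giving $\Omega(nx)$ at once. (The extremal family here is $\binom{[n]}{n/2}$ together with $x$ sets of $\binom{[n]}{n/2-1}$.)

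Granting this, run the graph container algorithm on $G$ with parameter $\Delta:=cn$ for a small constant $c=c(\eps,\delta)$, exactly as in the proof of Lemma~\ref{tiltcont}: it yields a map $f$ and, for each antichain $I$, a set $S\in\binom{V(G)}{\le 2^n/(cn)}$ with $I\subseteq S\cup f(S)$, and at termination $\Delta(G_i)<cn$, whence $e(G_i)<\tfrac12|V(G_i)|cn$. If $|V(G_i)|=\binom{n}{n/2}+x$, the supersaturation lemma forces $\delta n x\le e(G_i)<\tfrac12(\binom{n}{n/2}+x)cn$, so for $c$ small enough $x\le\eps\binom{n}{n/2}$, i.e.\ $|f(S)|\le(1+\eps)\binom{n}{n/2}$. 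Since $\binom{n}{n/2}=\Theta(2^n/\sqrt n)$, the number of containers is $\binom{2^n}{\le 2^n/(cn)}=2^{O(2^n\log n/(cn))}=2^{o(\binom{n}{n/2})}$, giving (i); and $|S|\le 2^n/(cn)=o(\binom{n}{n/2})$, so $|F|\le(1+2\eps)\binom{n}{n/2}$ for every $F\in\mathcal F$, giving (iii); (ii) is automatic. Letting $\eps\to0$ yields $A(n)\le 2^{(1+o(1))\binom{n}{\lfloor n/2\rfloor}}$, which together with the lower bound proves the theorem.

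The main obstacle is the supersaturation lemma, and in particular its linear-in-$n$ strength. A bound of only $\Omega(x)$ comparable pairs would not suffice: property (i) forces $\Delta=\omega(\sqrt n\log n)$, and with such $\Delta$ such a bound could not keep $G_i$ from being much larger than $(1+o(1))\binom{n}{n/2}$. So the argument genuinely relies on the fact that exceeding the middle layer by a constant factor forces a vertex of degree $\Omega(n)$ in the comparability graph --- not merely $\Omega(1)$.
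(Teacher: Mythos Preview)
Your proposal is correct and follows the same container-plus-supersaturation template as the paper: run the Kleitman--Winston algorithm on the comparability graph with parameter $\Theta(n)$, and use linear-in-$n$ supersaturation to bound the containers. The only difference is in the supersaturation step: the paper states and proves Kleitman's sharp bound of $(\lfloor n/2\rfloor+1)x$ comparable pairs via symmetric chain decompositions (Theorem~\ref{spernersupersat}), which works for all $x\ge 0$ and sidesteps the truncation to near-middle layers that your random-chain sketch implicitly needs (your bound $\binom{n-|A|}{|B|-|A|}\ge n-|A|\sim n/2$ fails for $|A|$ close to $n$); your fallback to the classical Kleitman extremal result is of course exactly Theorem~\ref{spernersupersat}, so the arguments coincide there.
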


For further details on the history of this, and similar questions, we refer the reader to the brilliant survey by Saks~\cite{sakssurvey}. Our first goal in this section is to give an alternative proof of Theorem \ref{numberofsperner} using the container method. We will apply the following supersaturation result of Kleitman~\cite{kleitmanspernersupersat}.

\begin{thm}[Kleitman~\cite{kleitmanspernersupersat}]\label{spernersupersat}
Let $\mathcal{A}\subseteq\mathcal{P}(n)$ with $|\mathcal{A}|\geq \binom{n}{\lfloor n/2\rfloor}+x$. Then $\mathcal{A}$ contains at least $\left(\lfloor n/2\rfloor +1\right)x$ pairs $A,B$ with $A\subset B$.
\end{thm}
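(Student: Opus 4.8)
The plan is to prove this sharp supersaturation estimate by a compression argument that reduces an arbitrary $\mathcal{A}$ to a canonical family, followed by an explicit optimisation; this is a classical result of Kleitman, so one could equally just cite~\cite{kleitmanspernersupersat}. First note that the constant $\lfloor n/2\rfloor+1$ is best possible: if $\mathcal{A}$ consists of the middle layer $\binom{[n]}{\lfloor n/2\rfloor}$ together with any $x$ sets of size $\lfloor n/2\rfloor+1$, then $|\mathcal{A}|=\binom{n}{\lfloor n/2\rfloor}+x$ and $\mathcal{A}$ contains exactly $(\lfloor n/2\rfloor+1)x$ comparable pairs, since each added set has precisely $\lfloor n/2\rfloor+1$ subsets of size $\lfloor n/2\rfloor$, all lying in $\mathcal{A}$. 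The goal is thus to show no family of this size does better; the statement strengthens Sperner's theorem, which is the assertion that the minimum number of comparable pairs is positive once $x\ge1$.

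First I would run a compression (``shifting'') argument: repeatedly replace $\mathcal{A}$ by a family of the same size whose number of comparable pairs does not increase, using the standard up/down-compression operations, until $\mathcal{A}$ attains a canonical form. The aim is to push the part of $\mathcal{A}$ in each layer towards the middle layer and to replace each layer by an initial segment in the colex order, so that $\mathcal{A}$ becomes a run of consecutive layers around the middle, all full except possibly for one colex-initial-segment layer at each end. The essential technical input is a Kruskal--Katona-type bound controlling how many comparable cover pairs are forced between consecutive layers in terms of the layer sizes.

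For a family of this canonical shape, write $a_k:=|\mathcal{A}\cap\binom{[n]}{k}|$. The number of comparable pairs is then an explicit quantity, convex in the profile $(a_k)$, built from the consecutive-layer cover counts and the longer-range counts. Using $\sum_k a_k=\binom{n}{\lfloor n/2\rfloor}+x$ together with the shadow lower bounds, a routine optimisation (rearrangement and convexity) shows the minimum equals $(\lfloor n/2\rfloor+1)x$, attained by taking the full middle layer plus $x$ further sets in the layer immediately above.

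The hard part is the compression step: unlike the classical Sperner/Kruskal--Katona setting, where one minimises a single shadow, here one must simultaneously track comparable pairs spanning every pair of layers while modifying $\mathcal{A}$, and verify the process terminates at a tractable family. A more hands-on alternative is a double count over all $n!$ maximal chains: one has $\sum_{\mathcal{C}}\bigl(|\mathcal{A}\cap\mathcal{C}|-1\bigr)=\sum_{A\in\mathcal{A}}|A|!\,(n-|A|)!-n!\ \ge\ n!\,x\big/\binom{n}{\lfloor n/2\rfloor}$ by the LYM inequality, while each comparable pair $A\subsetneq B$ occurs as a consecutive pair in at most $|A|!\,(|B|-|A|)!\,(n-|B|)!$ maximal chains; for pairs near the middle this already gives nearly the sharp constant, but extracting the exact value $\lfloor n/2\rfloor+1$ for all $n$, and handling comparable pairs far from the middle (which would otherwise absorb the whole budget), still requires a preliminary reduction to the near-middle regime.
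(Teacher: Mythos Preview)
Your proposal outlines two strategies but carries neither to completion, and you yourself flag the gaps. The compression route is indeed Kleitman's original approach and can be made to work, but you do not execute the key step of controlling \emph{all} comparable pairs (not just covers) through the shifting, nor the terminal optimisation. Your maximal-chain double count is closer in spirit to what the paper does, but as you observe, the weight $|A|!\,(|B|-|A|)!\,(n-|B|)!$ attached to a pair $A\subsetneq B$ can be enormous when $|B|-|A|$ is large, so one cannot read off the sharp constant $\lfloor n/2\rfloor+1$ without a further reduction that you do not supply.

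The paper sidesteps both obstacles by replacing maximal chains with a \emph{symmetric chain decomposition}. An SCD $\mathcal{X}$ partitions $\mathcal{P}(n)$ into exactly $\binom{n}{\lfloor n/2\rfloor}$ disjoint chains, so pigeonhole gives at least $x$ comparable pairs from $\mathcal{A}$ lying inside chains of \emph{every} SCD. Now average over a uniformly random permutation $\pi\in S_n$ applied to $\mathcal{X}$. For a fixed comparable pair $A\subsetneq B$ with $A,B\notin\{\emptyset,[n]\}$, there are at least $\binom{\lfloor n/2\rfloor+1}{\lfloor n/2\rfloor}=\lfloor n/2\rfloor+1$ sets of size $|A|$ inside $B$ (if $|B|\ge\lfloor n/2\rfloor+1$) or of size $|B|$ containing $A$ (otherwise), and by symmetry each is equally likely to share a chain of $\pi(\mathcal{X})$ with $B$ (respectively $A$); hence the probability that $A,B$ lie in the same chain is at most $1/(\lfloor n/2\rfloor+1)$. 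Comparing the two expectation bounds yields $(\text{number of comparable pairs})/(\lfloor n/2\rfloor+1)\ge x$ immediately. Swapping your $n!$ overlapping maximal chains for $\binom{n}{\lfloor n/2\rfloor}$ disjoint symmetric chains is precisely what makes both the pigeonhole lower bound and the symmetry upper bound sharp at once, with no compression and no near-middle reduction.
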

For $x\leq \binom{n}{\lfloor n/2\rfloor+1}$, Theorem~\ref{spernersupersat} is easily seen to be optimal, by taking a full middle layer and any $x$ sets on the layer above. Our proof of this supersaturation theorem will make use of the existence of a \emph{symmetric chain decomposition} (or SCD) of $\mathcal{P}(n)$, given first by de Bruijn, Tengbergen and Kruyswijk \cite{bruijnscd}. An SCD $\mathcal{X}$ is a partition of $\mathcal P (n)$ into \emph{symmetric chains}, i.e. chains that for some $k\leq n/2$ consist of precisely  one set of each size $i$ between $k$ and $n-k$. The proof we give is very similar to the proof of a more general result from~\cite{js}.

\begin{proof}[Proof of Theorem \ref{spernersupersat}]
Without loss of generality we may assume that $\emptyset, [n] \notin \mathcal A$.
Given any SCD $\mathcal{Z}$ we say  that $\mathcal{Z}$ \emph{contains} a bad pair $A,B$ if $A,B\in\mathcal{A}$ and there exists a chain $X\in\mathcal{Z}$ such that $A,B\in X$. Note that $\mathcal{Z}$ is a partition of $\mathcal{P}(n)$ into $\binom{n}{\lfloor n/2 \rfloor}$ chains, hence by the pigeonhole principle $\mathcal{Z}$ contains at least $x$ bad pairs. 

Fix some SCD $\mathcal X$.
Each permutation $\pi\in S_n$ induces a permutation on the subsets of $[n]$ and hence on collections of subsets of $[n]$. 
In particular, $\pi (\mathcal X)$ is a SCD.
We will pick a random permutation $\pi\in S_n$ and  estimate the number of bad pairs contained in $\pi(\mathcal{X})$. 

Let $\mathcal P$ denote the set of ordered pairs $A,B\in \mathcal A $ where $A\subset B$.
Consider any $(A,B) \in \mathcal P$.
If $|B|\geq \lfloor n/2\rfloor +1$ define   $\delta_A(B):=\{S\subset [n]:S\subset B, |S|=|A|\}$. Otherwise define $\delta_A(B):=\{S\subset [n]:A\subset S, |S|=|B|\}$.
Since  $A,B\notin \{\emptyset,[n]\}$, in both cases we have $|\delta_A(B)|\geq \binom{\lfloor n/2\rfloor +1}{\lfloor n/2\rfloor}$.

If $|B|\geq \lfloor n/2\rfloor +1$,
the probability that there is a chain $X\in\pi(\mathcal{X})$ with $S,B\in X$ is the same for all $S\in \delta_A(B)$. So the probability that there is a chain $X\in\pi(\mathcal{X})$ with $A,B\in X$ is at most $\frac{1}{\lfloor n/2 \rfloor +1}$. 
Similarly if $|B|\leq \lfloor n/2\rfloor $, the probability that there is a chain $X\in\pi(\mathcal{X})$ with $A,B\in X$ is at most $\frac{1}{\lfloor n/2 \rfloor +1}$. 
Thus, the expected number of bad pairs in $\pi(\mathcal{X})$ is at most $|\mathcal P|/(\lfloor n/2 \rfloor +1)$. On the other hand, as $\pi(\mathcal{X})$ is a SCD there are at least $x$ bad pairs in $\pi(\mathcal{X})$. Hence,
$|\mathcal P| \geq \left(\lfloor n/2\rfloor +1\right)x$, as desired.
\end{proof}

Now Theorem~\ref{numberofsperner} follows from an easy application of the container method. 
\begin{proof}[Proof of Theorem~\ref{numberofsperner}]
Let $\eps >0$ and let $G$ be the graph with vertex set $\mathcal P(n)$ where $A$ and $B$ are adjacent precisely if $A \subset B$ or $B \subset A$.
By applying the graph container algorithm to $G$ with parameter $\eps n/10$, Theorem~\ref{spernersupersat} implies that we obtain a function 
$f : \binom{V(G)}{\leq 10 \cdot 2^n/\eps n} \to \binom{V(G)}{\leq (1+\eps)\binom{n}{\lfloor n/2\rfloor }}$  such that, for any independent set $I$ in $G$, there is a subset $S \subseteq I$ where 
$S \in \binom{V(G)}{\leq 10 \cdot 2^n/\eps n} $
 and   $I \subseteq S \cup f(S)$. 

Let $\mathcal F$ be the collection of all sets $S \cup f(S)$ for all $S \in\binom{V(G)}{\leq 10 \cdot 2^n/\eps n}$. Then
$$|\mathcal{F}|\leq \binom{2^n}{\leq 10\frac{2^n}{\epsilon n}}\leq 2^{20\frac{2^n}{\epsilon n} \log n}=2^{o\left(\binom{n}{\lfloor n/2\rfloor}\right)}.$$
Further, every antichain is an independent set in $G$ and therefore lies in some element of $ \mathcal F$ and $|F|\leq (1+2\eps)\binom{n}{\lfloor n/2\rfloor }$ for every $F \in \mathcal F$. The existence of $\mathcal F$ immediately proves the theorem.
\end{proof}

%We also note that the proof of Theorem \ref{spernersupersat} actually gives something more than we stated. 
Let $\mathcal{F}\subseteq \P (n)$, and for $i\in [n]$, let $B_i$ denote the number of comparable pairs $A,B\in\mathcal{F}$ with $|B\backslash A|=i$ and let $B_{\geq i}:=B_i+B_{i+1}+\ldots+B_n$. Then by arguing as in the proof of 
 Theorem~$\ref{spernersupersat}$ we get the following proposition. We include it because we strongly suspect it will have applications.

\begin{prop}\label{spernerhypersupersat}
Let $n,N,x\in \mathbb N$. Suppose $\mathcal{F}\subseteq \mathcal{P}(n)$ where $|\mathcal{F}|=\binom{n}{n/2}+x$ and for all $A\in\mathcal{F}$ we have $N\leq|A|\leq n-N$. Then $$\frac{B_{\geq N}}{\binom{\lfloor n/2\rfloor + \lceil N/2 \rceil}{N}}+\sum_{k=1}^{N-1}\frac{B_k}{\binom{\lfloor n/2\rfloor + \lceil k/2 \rceil}{k}} \geq x.$$
\end{prop}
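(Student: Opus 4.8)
The plan is to mimic the proof of Theorem~\ref{spernersupersat} almost verbatim, but to keep track of the \emph{colour} of each bad pair (i.e.\ the value $i=|B\setminus A|$) rather than lumping all bad pairs together. First I would reduce to the case $\emptyset,[n]\notin\mathcal F$, which is harmless since we are already assuming $N\le |A|\le n-N$ with $N\ge 1$. Fix a symmetric chain decomposition $\mathcal X$ of $\mathcal P(n)$, and for a random permutation $\pi\in S_n$ consider the SCD $\pi(\mathcal X)$. As in Theorem~\ref{spernersupersat}, since $\pi(\mathcal X)$ is a partition of $\mathcal P(n)$ into exactly $\binom{n}{\lfloor n/2\rfloor}$ chains and $|\mathcal F|=\binom{n}{n/2}+x$, the pigeonhole principle gives that $\pi(\mathcal X)$ contains at least $x$ bad pairs (pairs $A,B\in\mathcal F$ lying on a common chain of $\pi(\mathcal X)$), \emph{for every} $\pi$.

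Next I would bound, for each ordered comparable pair $(A,B)\in\mathcal F$ with $A\subset B$ and $|B\setminus A|=i$, the probability $p_i(A,B)$ that $A$ and $B$ lie on a common chain of $\pi(\mathcal X)$. Split according to whether $|B|\ge \lfloor n/2\rfloor+\lceil i/2\rceil$ or not. In the first case, set $\delta_A(B):=\{S: A\subset S\subseteq B,\ |S|=|A|+? \}$—more precisely, mimic the shadow/shade argument: if $B$ sits high enough, let $\delta_A(B)=\{S\subseteq B:|S|=|A|\}$, which has size $\binom{|B|}{|A|}=\binom{|B|}{i}\ge \binom{\lfloor n/2\rfloor+\lceil i/2\rceil}{i}$ (using $|B|\ge N\ge ?$; here the hypothesis $|B|\le n-N$ and $|A|\ge N$ force $|B\setminus A|=i\le n-2N$, and the relevant monotonicity of binomials makes $\binom{|B|}{i}$ at least the stated quantity when $|B|\ge \lfloor n/2\rfloor+\lceil i/2\rceil$). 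Symmetrically, if $|B|<\lfloor n/2\rfloor+\lceil i/2\rceil$ use $\delta_A(B)=\{S:A\subseteq S,\ |S|=|B|\}$, of size $\binom{n-|A|}{i}\ge\binom{\lfloor n/2\rfloor+\lceil i/2\rceil}{i}$. Each $S\in\delta_A(B)$ is, by symmetry of $\pi$, equally likely to be the element on $B$'s chain at the relevant level, so $p_i(A,B)\le 1/\binom{\lfloor n/2\rfloor+\lceil i/2\rceil}{i}$.

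Then linearity of expectation gives that the expected number of bad pairs of colour $i$ in $\pi(\mathcal X)$ is at most $B_i/\binom{\lfloor n/2\rfloor+\lceil i/2\rceil}{i}$ for $1\le i\le N-1$. For colours $i\ge N$ one only gets the weaker uniform bound $p_i(A,B)\le 1/\binom{\lfloor n/2\rfloor+\lceil N/2\rceil}{N}$ (since $\binom{\lfloor n/2\rfloor+\lceil i/2\rceil}{i}$ is increasing in $i$ over the relevant range, the $i=N$ term is the smallest such denominator), so their total contribution to the expectation is at most $B_{\ge N}/\binom{\lfloor n/2\rfloor+\lceil N/2\rceil}{N}$. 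Summing, the expected number of bad pairs is at most
\[
\frac{B_{\ge N}}{\binom{\lfloor n/2\rfloor+\lceil N/2\rceil}{N}}+\sum_{k=1}^{N-1}\frac{B_k}{\binom{\lfloor n/2\rfloor+\lceil k/2\rceil}{k}},
\]
while we showed this expectation is at least $x$; comparing the two yields the proposition.

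The main obstacle is the bookkeeping in the probability bound: verifying that $|\delta_A(B)|\ge\binom{\lfloor n/2\rfloor+\lceil i/2\rceil}{i}$ in both the ``$B$ high'' and ``$B$ low'' regimes, using $N\le|A|$, $|B|\le n-N$, and the right monotonicity of binomial coefficients (in particular that $\binom{m}{i}\ge\binom{\lfloor n/2\rfloor+\lceil i/2\rceil}{i}$ exactly when $m\ge\lfloor n/2\rfloor+\lceil i/2\rceil$), and handling floors/ceilings and the parity of $n$ cleanly. Everything else is a direct transcription of the argument for Theorem~\ref{spernersupersat}, now refined colour by colour; the grouping of all colours $i\ge N$ into a single term is what lets the weakest bound $\binom{\lfloor n/2\rfloor+\lceil N/2\rceil}{N}$ suffice there.
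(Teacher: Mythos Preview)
Your approach is exactly what the paper intends: it explicitly says the proposition follows ``by arguing as in the proof of Theorem~\ref{spernersupersat}'', i.e.\ the random-SCD argument refined colour by colour, precisely as you outline. The structure (pigeonhole gives $\ge x$ bad pairs in every SCD; bound the probability a given pair of colour $i$ lands on a common chain; compare expectations) is correct.

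There is one small gap in your justification for the $i\ge N$ term. You assert that $\binom{\lfloor n/2\rfloor+\lceil i/2\rceil}{i}$ is increasing in $i$ over the relevant range, so that the $i=N$ value is smallest. This is false in general: for instance with $n=10$ one gets $\binom{8}{5}=56$ at $i=5$ but $\binom{8}{6}=28$ at $i=6$. The conclusion you need, namely $|\delta_A(B)|\ge\binom{\lfloor n/2\rfloor+\lceil N/2\rceil}{N}$ whenever $i\ge N$, is nonetheless true, but it should be argued directly from the hypothesis $N\le|A|$ and $|B|\le n-N$ rather than via the intermediate quantity $\binom{\lfloor n/2\rfloor+\lceil i/2\rceil}{i}$. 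Concretely: in the ``$B$ high'' case $|\delta_A(B)|=\binom{|B|}{i}$; since $|A|\ge N$ gives $i\le|B|-N$, unimodality yields $\binom{|B|}{i}\ge\binom{|B|}{N}\ge\binom{\lfloor n/2\rfloor+\lceil N/2\rceil}{N}$. The ``$B$ low'' case is symmetric, using $|B|\le n-N$ to get $i\le(n-|A|)-N$ and hence $\binom{n-|A|}{i}\ge\binom{n-|A|}{N}$, together with the integer inequality $n-|A|\ge\lfloor n/2\rfloor+\lceil N/2\rceil$. With this correction the argument goes through.
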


%%%%%%%%%%%%%%
\subsection{A two-coloured generalisation of Theorem~\ref{numberofsperner}}\label{secb}

Now we turn our attention to a two-coloured generalisation of Sperner's theorem, which was discovered independently by Katona \cite{katonaspernergener} and Kleitman \cite{kleitmanspernergener}. Given a (two-)colouring of $[n]$, we say that a pair of sets $A,B\in\mathcal{P}(n)$ is \emph{comparable with monochromatic difference} if $A\subset B$, and the difference $B\backslash A$ is monochromatic.

\begin{thm}[Katona \cite{katonaspernergener}, Kleitman \cite{kleitmanspernergener}]\label{generalisedspernerthm}
Let $\mathcal{A}\subseteq \mathcal{P}(n)$, and let $R\cup W$ be a partition of $[n]$ (i.e. a two-colouring of $[n]$ using colours Red and White). If $\mathcal{A}$ does not contain a pair of sets which are comparable with monochromatic difference then $|\mathcal{A}|\leq \binom{n}{\lfloor n/2\rfloor}$. 
\end{thm}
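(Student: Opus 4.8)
The plan is to exploit the factorisation $\mathcal{P}(n)\cong\mathcal{P}(R)\times\mathcal{P}(W)$ given by $S\mapsto(S\cap R,\,S\cap W)$, combined with symmetric chain decompositions of the two factors. Fix an SCD $\mathcal{X}_R$ of $\mathcal{P}(R)$ and an SCD $\mathcal{X}_W$ of $\mathcal{P}(W)$ (these exist by de Bruijn--Tengbergen--Kruyswijk, as already used in Section~\ref{seca}). For each pair $(C,D)$ with $C\in\mathcal{X}_R$ and $D\in\mathcal{X}_W$, the ``grid'' $C\times D$ is a block of the induced partition of $\mathcal{P}(n)$, and these blocks partition all of $\mathcal{P}(n)$.

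The first step is to bound $|\mathcal{A}\cap(C\times D)|$. Observe that a pair $(S_1,S_2)\subsetneq(T_1,T_2)$ inside a block $C\times D$ is comparable with monochromatic difference if and only if it lies in a single ``row'' (i.e.\ $S_1=T_1$, so the difference $T_2\setminus S_2\subseteq W$ is White) or a single ``column'' (i.e.\ $S_2=T_2$, so the difference $T_1\setminus S_1\subseteq R$ is Red): if both $T_1\setminus S_1$ and $T_2\setminus S_2$ were non-empty the difference would use both colours, and if both were empty the pair would not be proper. Since $\mathcal{A}$ contains no such pair, $\mathcal{A}\cap(C\times D)$ — viewed as a set of cells in the $|C|\times|D|$ grid — has at most one cell in each row and at most one in each column, so $|\mathcal{A}\cap(C\times D)|\leq\min(|C|,|D|)$. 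Summing over all blocks gives $|\mathcal{A}|\leq\sum_{C\in\mathcal{X}_R,\,D\in\mathcal{X}_W}\min(|C|,|D|)$.

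It then remains to establish the identity $\sum_{C,D}\min(|C|,|D|)=\binom{n}{\lfloor n/2\rfloor}$. I would prove this by double-counting the middle layer $L:=\binom{[n]}{\lfloor n/2\rfloor}$. Since the blocks partition $\mathcal{P}(n)$ we have $\binom{n}{\lfloor n/2\rfloor}=|L|=\sum_{C,D}|L\cap(C\times D)|$, and $L\cap(C\times D)$ is precisely the set of pairs $(S_1,S_2)$ with $S_1\in C$, $S_2\in D$ and $|S_1|+|S_2|=\lfloor n/2\rfloor$. Because $C$ is a symmetric chain in $\mathcal{P}(R)$ it contains exactly one set of each size in an interval of length $|C|$ centred at $|R|/2$, and similarly $D$ has sizes filling an interval of length $|D|$ centred at $|W|/2$; a short check of the endpoints of these intervals — using that when $n$ is odd the lengths $|C|$ and $|D|$ automatically have opposite parity, so $|C|\neq|D|$ — shows this count equals exactly $\min(|C|,|D|)$. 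This yields the identity and hence $|\mathcal{A}|\leq\binom{n}{\lfloor n/2\rfloor}$. (Equivalently, one could note that the grid $C\times D$ sits symmetrically about rank $n/2$ in $\mathcal{P}(n)$ and decomposes into $\min(|C|,|D|)$ symmetric chains, so the union of these over all blocks is an SCD of $\mathcal{P}(n)$, which necessarily has $\binom{n}{\lfloor n/2\rfloor}$ chains.)

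The main obstacle is this last counting identity: the row/column argument is essentially bookkeeping, whereas verifying $|L\cap(C\times D)|=\min(|C|,|D|)$ for \emph{every} pair of chains requires care with parities and with the endpoints of the rank intervals spanned by $C$ and $D$. In particular one should check the degenerate case where a chain has length $1$ — a singleton of size $|R|/2$ or $|W|/2$ — which can occur only when the corresponding factor has even size, and confirm that the parity observation rules out the problematic configurations when $n$ is odd.
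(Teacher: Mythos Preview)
Your proposal is correct and is essentially the same argument the paper invokes: although Theorem~\ref{generalisedspernerthm} is only cited there, its proof is reproduced inside the proof of Lemma~\ref{twocolourspernersupersat}, where one takes SCDs $\mathcal{B}_R,\mathcal{B}_W$ of $\mathcal{P}(R),\mathcal{P}(W)$, observes that a block $X\times Y$ free of bad pairs contains at most $\min\{|X|,|Y|\}$ elements of $\mathcal{A}$, and then uses the identity $\sum_{X,Y}\min\{|X|,|Y|\}=\binom{n}{\lfloor n/2\rfloor}$ via double-counting the middle layer---exactly your plan. Your parenthetical alternative (that the blocks refine into an SCD of $\mathcal{P}(n)$) is in fact the cleanest way to dispose of the parity bookkeeping you flag as the main obstacle.
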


Note that setting $R=\emptyset$ in Theorem~\ref{generalisedspernerthm} gives the classical Sperner theorem. We will consider  the following question: given the partition $R\cup W$, how many families are there without two comparable sets whose difference is monochromatic? Alternatively, how many families are there for which there exists a partition $R\cup W$ such that there are no comparable pairs with monochromatic difference? (The answers to these two questions are at most a factor of $2^n$ apart.) To answer this question using the container method, first we need a supersaturation result.

\begin{lemma}\label{twocolourspernersupersat}
Let $\eps>0$ and $n$ be sufficiently large. Given a partition $R\cup W=[n]$ and a family $\mathcal{F}\subset \mathcal{P}(n)$ of size $|\mathcal{F}|\geq (1+\eps)\binom{n}{n/2}$, there are at least $\eps \binom{n}{n/2}\frac{n^{3/4}}{4}$  comparable pairs
$A,B \in \mathcal F$ with monochromatic difference.
\end{lemma}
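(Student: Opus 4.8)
The plan is to marry the symmetric chain machinery used for Theorem~\ref{spernersupersat} with the product structure underlying Theorem~\ref{generalisedspernerthm}. Write $r=|R|$, $w=|W|$ and assume without loss of generality $r\ge w$, so $r\ge n/2$ (the relation is symmetric in $R$ and $W$). Fix symmetric chain decompositions $\mathcal X_R$ of $\P(R)$ and $\mathcal X_W$ of $\P(W)$. For $X\in\mathcal X_R$, $Y\in\mathcal X_W$ the \emph{grid} $X\times Y:=\{x\cup y:x\in X,\,y\in Y\}$ has $|X||Y|$ sets, and the grids partition $\P(n)$. Split each grid into $\min(|X|,|Y|)$ \emph{pieces}: if $|X|\ge|Y|$ take the pieces $\{x\cup y_0:x\in X\}$ ($y_0\in Y$), which are copies of the $R$-chain $X$; if $|X|<|Y|$ take $\{x_0\cup y:y\in Y\}$ ($x_0\in X$), copies of the $W$-chain $Y$. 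Any two sets in one piece are comparable with monochromatic difference, and an easy computation shows each grid meets the middle layer $\binom{[n]}{\lfloor n/2\rfloor}$ in exactly $\min(|X|,|Y|)$ sets, so the total number of pieces is $\sum_{X,Y}\min(|X|,|Y|)=\binom{n}{\lfloor n/2\rfloor}$.

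Now apply a uniformly random $(\sigma,\tau)\in S_R\times S_W$ to $(\mathcal X_R,\mathcal X_W)$ to get a random piece decomposition. For \emph{every} realisation each piece is a chain of mono-difference-comparable sets, so the number of mono-difference comparable pairs of $\mathcal F$ inside a common piece is $\sum_P\binom{|\mathcal F\cap P|}{2}\ge|\mathcal F|-(\#\text{pieces})\ge\eps\binom{n}{n/2}$; hence this quantity has expectation $\ge\eps\binom{n}{n/2}$. On the other hand, a mono-difference pair $A\subsetneq B$ with $D:=B\setminus A$ of size $d\ge1$, say $D\subseteq R$, lies in a common piece only if $A\cap R$ and $B\cap R$ fall in a common chain of $\sigma(\mathcal X_R)$ (they already share their $W$-part); by the averaging argument of Theorem~\ref{spernersupersat} — the distribution of a random SCD is invariant under permutations of the ground set — this has probability at most $1/\binom{|B\cap R|}{d}$, and symmetrically with $W$ for $D\subseteq W$.

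Suppose first $w> n^{3/4}$. Discard from $\mathcal F$ every $A$ with $\bigl||A\cap R|-r/2\bigr|>\sqrt{3r\log n}$ or $\bigl||A\cap W|-w/2\bigr|>\sqrt{3w\log n}$; by Chernoff only $o\!\left(\binom{n}{n/2}\right)$ sets are removed, leaving $\mathcal F'$ with $|\mathcal F'|\ge(1-o(1))\binom{n}{n/2}+\eps\binom{n}{n/2}$. For a mono-difference pair inside $\mathcal F'$ the difference has size $d=o(\min(r,w))$ and the relevant lower endpoint satisfies $|B\cap R|\ge(\tfrac12-o(1))r$ or $|B\cap W|\ge(\tfrac12-o(1))w$, so in either case the binomial coefficient above is at least $(\tfrac12-o(1))\min(r,w)\ge\tfrac13 n^{3/4}$, i.e.\ the common-piece probability is at most $3n^{-3/4}$. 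Comparing with the lower bound on the expectation gives at least $(1-o(1))\eps\binom{n}{n/2}\cdot\tfrac13 n^{3/4}\ge\eps\binom{n}{n/2}\tfrac{n^{3/4}}{4}$ mono-difference comparable pairs in $\mathcal F'\subseteq\mathcal F$, as required.

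Finally suppose $w\le n^{3/4}$; here the pieces inside $\P(W)$ are too short for the randomisation to spread mono-difference pairs, and we argue directly. Writing $\P(n)=\P(R)\times\P(W)$, set $\mathcal F_y:=\{x\subseteq R:x\cup y\in\mathcal F\}$ for $y\subseteq W$. Since $w\le n^{3/4}$, Fact~\ref{fact1} gives $2^w\binom{r}{\lfloor r/2\rfloor}=(1+o(1))\binom{n}{n/2}$, hence $\sum_y\bigl(|\mathcal F_y|-\binom{r}{\lfloor r/2\rfloor}\bigr)=|\mathcal F|-2^w\binom{r}{\lfloor r/2\rfloor}\ge(\eps-o(1))\binom{n}{n/2}$. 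Applying Theorem~\ref{spernersupersat} inside $\P(R)$ to each $\mathcal F_y$ produces at least $(\lfloor r/2\rfloor+1)\max\{0,\,|\mathcal F_y|-\binom{r}{\lfloor r/2\rfloor}\}$ comparable pairs $x_1\subsetneq x_2$ in $\mathcal F_y$, each yielding the pair $x_1\cup y\subsetneq x_2\cup y$ of $\mathcal F$ with monochromatic difference $x_2\setminus x_1\subseteq R$; summing over $y$ gives at least $(\lfloor r/2\rfloor+1)(\eps-o(1))\binom{n}{n/2}\ge\tfrac n4(1-o(1))\eps\binom{n}{n/2}\ge\eps\binom{n}{n/2}\tfrac{n^{3/4}}{4}$ such pairs. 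The main obstacle is exactly this dichotomy: the two-colour piece decomposition is only strong enough when \emph{both} colour classes have size $\gg n^{3/4}$, and the unbalanced case has to be routed through Kleitman's one-colour supersaturation on the larger class — checking that the two regimes overlap at the threshold $n^{3/4}$, and tracking the constant $\tfrac14$, is the delicate bookkeeping.
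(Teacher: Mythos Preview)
Your proof is correct and follows essentially the same two-case strategy as the paper: both split on whether $\min(|R|,|W|)\le n^{3/4}$, handle the unbalanced case identically by slicing over the small colour class and applying Kleitman's one-colour supersaturation on the large class, and handle the balanced case via a random product of SCDs, combining a pigeonhole lower bound on captured bad pairs with an $O(n^{-3/4})$ symmetry bound on each pair's capture probability. The only cosmetic differences are that the paper trims $\mathcal F$ by deleting monochromatic sets and sets containing all of $R$ or $W$ (whereas you delete sets with atypical $|A\cap R|$ or $|A\cap W|$ via a Chernoff bound), and that you package the pigeonhole step through an explicit decomposition into $\binom{n}{\lfloor n/2\rfloor}$ monochromatic ``pieces'' rather than the paper's per-grid count --- but these are the same argument.
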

We note that the factor ${n^{3/4}}/{4}$ is far from optimal  and indeed the proof below can easily be strengthened to some function of the form $n^{1-o(1)}$ instead of the $n^{3/4}$. The number of such pairs is probably at least $\eps\binom{n}{n/2}\left(\lfloor\frac{n}{2}\rfloor +1\right)$ (if $n>5$). We could not prove this, but fortunately this weaker result suffices to prove the counting theorem later (in fact one could replace the $n^{3/4}$ by anything bigger than $c\sqrt{n}\log n$ and the calculations would still go through, with a worse $o(1)$ error term in the final result).
\begin{proof}[Proof of Lemma \ref{twocolourspernersupersat}]
Suppose first that $|R|\leq n^{3/4}$. For each $S\subseteq R$, let $\mathcal{F}_S:=\{A\subseteq W: A\cup S\in\mathcal{F}\}$. 
Given any pair of sets $A,B$ with $A \subset B$ and $A,B \in \mathcal F_S$ we can find a comparable pair $A\cup S, B\cup S $ in $\mathcal F$ with monochromatic difference. Thus, Theorem~\ref{spernersupersat} implies that
the number  of comparable pairs in  $\mathcal F$ with monochromatic difference is at least 
\begin{equation*}
\begin{split}
 &  \sum_{S\subseteq R}\left(|\mathcal{F}_S|-\binom{n-|R|}{(n-|R|)/2}\right)\left(\frac{n-|R|}{2}+1\right)\geq \left((1+\eps) \binom{n}{n/2} - 2^{|R|}\binom{n-|R|}{(n-|R|)/2}\right)\frac{n}{3}\\
& \geq \left((1+\eps) \binom{n}{n/2} - \left(1+\frac{\eps}{2}\right) \binom{n}{n/2}\right)\frac{n}{3}=\eps\binom{n}{n/2}\frac{n}{6},
\end{split}
\end{equation*}
as required. Note that in the penultimate inequality we used Fact~\ref{fact1} and that $|R|=o(n)$.

We may therefore assume that $|R|\geq n^{3/4}$ and $|W|\geq n^{3/4}$. Remove all monochromatic elements of $\mathcal{F}$, and all those elements of $\mathcal{F}$ that contain the entire set $R$ or the entire set $W$ as subsets. The number of such sets is at most $\frac{\eps}{2}\binom{n}{n/2}$. Following the original proof of Theorem \ref{generalisedspernerthm}, let $\mathcal{B}_W$ and $\mathcal{B}_R$ be SCDs of $\mathcal{P}(W)$ and $\mathcal{P}(R)$. Let the group $G=S_{|W|}\times S_{|R|}$ act on $R\cup W$ in the natural way, permuting the elements in the two sets. From now on for simplicity we shall refer to comparable pairs in $\mathcal{F}$ with monochromatic difference simply as \emph{bad pairs}. We say that the pair of SCDs $\mathcal{B}_R$, $\mathcal{B}_W$  \emph{contains} the bad pair $(A,B)$ if there exist chains $X\in\mathcal{B}_R$ and $Y\in \mathcal{B}_W$ such that $Y$ contains $(A\cap W, B\cap W)$ and $X$ contains $(A\cap R,B\cap R)$. 

Let $x:=\frac{\eps}{2}\binom{n}{n/2}$. We first show that every pair of SCDs contains at least $x$ bad pairs. This follows instantly from the original proof of Theorem \ref{generalisedspernerthm}: suppose on the contrary, we could find a pair of SCDs $\mathcal{B}_R$ and $\mathcal{B}_W$ and  a family $\mathcal{A}\subset \mathcal{F}$ of size $|\mathcal{A}|=\binom{n}{n/2}+1$ such that the pair $\mathcal{B}_R,\mathcal{B}_W$ does not contain any bad pairs from $\mathcal{A}$. If a pair of chains $(X,Y)\in\mathcal{B}_R\times \mathcal{B}_W$ does not contain any bad pairs, then the number of sets $A$ such that $Y$ contains $A\cap W$ and $X$ contains $A\cap R$  is at most $\min \{|X|,|Y|\}$. So if $X_1\subset \ldots \subset X_t$ is a chain in $\mathcal{B}_R$ and $Y_1\subset \ldots \subset Y_s$ is a chain in $\mathcal{B}_W$ then $\mathcal{A}$ contains at most $\min\{s,t\}$ sets of the form $X_i\cup Y_j$, which is also the number of sets of this form having size exactly $\lfloor n/2\rfloor$. Hence  $\sum_{X\in\mathcal{B}_R, Y\in\mathcal{B}_W}\min \{|X|,|Y|\}=\binom{n}{n/2}$ because  both sides count the number of subsets of $[n]$ of size $\lfloor n/2\rfloor$. Thus for every subfamily $\mathcal{A}\subseteq \mathcal{F}$ with $|\mathcal{A}|=\binom{n}{n/2}+1$  there exists a pair of chains $X \in \mathcal B_R$, $Y \in \mathcal B_W$ containing a bad pair from $\mathcal{A}$, and the claim follows. 

Choose a random element $\pi \in G$.
We claim  the probability that $\pi(\mathcal{B}_R,\mathcal{B}_W)$ contains a given bad pair is at most ${2}/{n^{3/4}}$. To see this, let $(A,B)$ be a bad pair. Without loss of generality we may assume $A\subset B$ and $B\backslash A\subseteq R$. This implies that $B\cap W = A\cap W$, and since $A\neq B$ we have $A\cap R \neq B \cap R$. The probability that $(A,B)$ is contained in the pair $\pi(\mathcal{B}_R,\mathcal{B}_W)$ of SCDs is equal to the probability that $(A\cap R, B\cap R)$ is contained in $\pi_R(\mathcal{B}_R)$ (where $\pi_R$ denotes the restriction of $\pi$ to the set $R$). We removed the monochromatic elements of $\mathcal{F}$ and those that contain $R$, hence $A\cap R,B\cap R \notin \{\emptyset,R\}$. Hence, defining $\delta_A(B)$ and applying the shadow argument as in the proof of Theorem \ref{spernersupersat}, we get that the probability that $\pi(\mathcal{B}_R,\mathcal{B}_W)$ contains $(A,B)$ is at most $\max\{2/|R|,2/|W|\}\leq 2/n^{3/4}$, as claimed. Putting the last two paragraphs together, we obtain that there are at least $\eps \binom{n}{n/2}\frac{n^{3/4}}{4}$ bad pairs, as required.
\end{proof}

Now a simple application of the container method, exactly as in the proof of Theorem \ref{numberofsperner}, yields a counting version of Theorem \ref{generalisedspernerthm}.  
%Recall that a pair $A,B$ is said to be \emph{comparable with monochromatic difference} if $A\subseteq B$, and $B\backslash A$ is monochromatic.

\begin{thm}\label{twocolourspernercounting}
The number of families $\mathcal{F}$ for which there exists a colouring $R\cup W=[n]$ such that there is no comparable pair $A,B\in\mathcal{F}$ with monochromatic difference is $2^{\binom{n}{n/2}(1+o(1))}$.
\end{thm}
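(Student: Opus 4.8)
The plan is to follow exactly the template used for Theorem~\ref{numberofsperner}, combining the supersaturation bound of Lemma~\ref{twocolourspernersupersat} with the graph container algorithm. For a fixed colouring $R\cup W=[n]$, define the auxiliary graph $G=G_{R,W}$ on vertex set $\mathcal P(n)$ where $A$ and $B$ are adjacent precisely if they form a bad pair, i.e.\ $A\subset B$ (or $B\subset A$) with monochromatic difference. Then a family avoiding bad pairs is exactly an independent set in $G$, and by Theorem~\ref{generalisedspernerthm} the largest such family has size $\binom{n}{\lfloor n/2\rfloor}$. First I would run the graph container algorithm on $G$ with parameter $\Delta:=n^{3/4}/8$ (any quantity that is $\omega(\sqrt n\log n)$ and $o(n)$ works, as the authors remark after Lemma~\ref{twocolourspernersupersat}); since the algorithm outputs a set $S\cup f(S)$ with $|S|\le 2^n/\Delta = 8\cdot 2^n/n^{3/4}$, and at termination $\Delta(G_i)<\Delta$ so $e(G_i)<|V(G_i)|\cdot n^{3/4}/8$, Lemma~\ref{twocolourspernersupersat} forces $|f(S)|=|V(G_i)|\le (1+\eps)\binom{n}{n/2}$.

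Next I would count the containers. For a single colouring, the number of sets $S\cup f(S)$ is at most $\binom{2^n}{\le 8\cdot 2^n/n^{3/4}}\le 2^{O(2^n\log n / n^{3/4})}=2^{o(\binom{n}{n/2})}$ by Fact~\ref{fact3} (since $2^n/n^{3/4}=o(2^n/\sqrt n)=o(\binom{n}{n/2})$, even after the extra $\log n$ factor). Now there are only $2^n$ colourings of $[n]$, so taking the union of the container families over all $2^n$ colourings gives a single family $\mathcal F$ with $|\mathcal F|\le 2^n\cdot 2^{o(\binom{n}{n/2})}=2^{o(\binom{n}{n/2})}$ (absorbing the $2^n$ factor into the $o(\cdot)$ in the exponent), such that every family $\mathcal F'$ admitting some good colouring lies in a member of $\mathcal F$, and every member has size at most $(1+2\eps)\binom{n}{n/2}$. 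Letting $\eps\to 0$ and using $\binom{n}{\lfloor n/2\rfloor}\sim\binom{n}{n/2}$, this yields the upper bound $2^{\binom{n}{n/2}(1+o(1))}$ on the number of families in question.

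For the matching lower bound, fix any colouring, say $R=\emptyset$, so that ``comparable pair with monochromatic difference'' becomes ``comparable pair''; then every subset of the middle layer $\binom{[n]}{\lfloor n/2\rfloor}$ is an antichain and hence has no bad pair, giving at least $2^{\binom{n}{\lfloor n/2\rfloor}}$ such families. Combined with the upper bound this proves the theorem. The only point requiring a little care is the bookkeeping in the exponent: one must check that the $2^n$ factor coming from summing over colourings, together with the $\log n$ loss in Fact~\ref{fact3}, is genuinely swallowed by $o(\binom{n}{n/2})=o(2^n/\sqrt n)$ — which it is, since $2^n/n^{3/4}$ times $\log n$ is still $o(2^n/\sqrt n)$ — so no real obstacle arises; the argument is essentially identical to the proof of Theorem~\ref{numberofsperner} with the extra union over colourings. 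I do not expect any genuinely hard step, the content being entirely in Lemma~\ref{twocolourspernersupersat} and Theorem~\ref{generalisedspernerthm}, both already established.
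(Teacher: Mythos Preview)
Your approach is exactly the paper's: fix a colouring, build the comparability-with-monochromatic-difference graph, run the container algorithm using Lemma~\ref{twocolourspernersupersat} as the supersaturation input, then union over the $2^n$ colourings (the paper's own proof is literally the phrase ``arguing precisely as in the proof of Theorem~\ref{numberofsperner}'' plus the $2^n$ union). The lower bound via the middle layer is also the same.

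One small slip worth flagging: with your choice $\Delta=n^{3/4}/8$, independent of $\eps$, the step ``$\Delta(G_i)<\Delta$ so $e(G_i)<|V(G_i)|n^{3/4}/8$, hence Lemma~\ref{twocolourspernersupersat} forces $|V(G_i)|\le(1+\eps)\binom{n}{n/2}$'' does not quite go through from the lemma \emph{as stated}. The lemma only promises $\eps\binom{n}{n/2}n^{3/4}/4$ bad pairs whenever $|\mathcal F|\ge(1+\eps)\binom{n}{n/2}$, a bound that does not grow with $|V(G_i)|$; comparing it with $e(G_i)<|V(G_i)|n^{3/4}/16$ yields only $|V(G_i)|>4\eps\binom{n}{n/2}$, which is no contradiction. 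What makes the argument work (and what ``arguing precisely as in the proof of Theorem~\ref{numberofsperner}'' really uses) is that the \emph{proof} of Lemma~\ref{twocolourspernersupersat} actually gives a bound linear in the excess, namely at least $c\,(|\mathcal F|-\binom{n}{n/2})\,n^{3/4}$ bad pairs once $|\mathcal F|\ge(1+\eps)\binom{n}{n/2}$, exactly parallel to Theorem~\ref{spernersupersat}. Using that, and taking $\Delta=c'\eps\,n^{3/4}$ (so that the parameter carries the $\eps$), your computation closes up: one gets $|V(G_i)|\le(1+\eps)\binom{n}{n/2}$ and $|S|\le 2^n/(c'\eps n^{3/4})$, and the container count is still $2^{O(2^n\log n/n^{3/4})}=2^{o(\binom{n}{n/2})}$ as you wrote. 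This is a bookkeeping tweak, not a change of strategy.
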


\begin{proof}
Fix a colouring $R,W$ with $R\cup W=[n]$, and define a graph $G$ on vertex set $\mathcal{P}(n)$ where two comparable sets are adjacent if their difference is monochromatic. Hence families without comparable pairs with monochromatic difference  correspond to independent sets in $G$. Arguing precisely as in the proof of Theorem \ref{numberofsperner}, we find that the number of independent sets, and hence the number of families without comparable sets with monochromatic difference, is $2^{\binom{n}{n/2}\left(1+o(1)\right)}$. There are $2^n$ possible colourings to start with, hence the number of families for which there exists a colouring avoiding comparable sets with monochromatic difference is $2^n\cdot 2^{\binom{n}{n/2}\left(1+o(1)\right)}=2^{\binom{n}{n/2}\left(1+o(1)\right)}$ as required.
\end{proof}

\subsection{Bollob\'{a}s's inequality and open questions}\label{sec73}
We next consider a generalisation of Sperner's theorem due to Bollob\'{a}s~\cite{bollobassperner}.
%Another generalisation of Sperner's theorem is originally due to Griggs-Stahl-Trotter \cite{griggssperner} and later generalised by Bollob\'{a}s \cite{bollobassperner}, we use a slight reformulation of it here. 
We say that a family $\mathcal{F}\subseteq\mathcal{P}(N)\times\mathcal{P}(N)$ is an \emph{intersecting set-pair system (ISP-system) with parameters $(n,N)$} if $\mathcal{F}=\{(A_j,B_j): 1\leq j\leq m\}$ and the following hold:
\begin{enumerate}
\item[(1)] $A_j\cap B_j=\emptyset$ for all $j$,
\item[(2)] $A_j\cap B_k\neq \emptyset$ for all $j\neq k$, and
\item[(3)]  $|A_j|+|B_j|\leq n$ for all $j$.
\end{enumerate}

\begin{thm}\label{griggsstahltrotter}
If $\mathcal{F}$ is an ISP-system with parameters $(n,N)$ then $|\mathcal{F}|\leq \binom{n}{n/2}$ and this bound is attained whenever $N\geq n$.
\end{thm}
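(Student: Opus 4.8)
The plan is to deduce the bound from the classical Bollob\'as set-pair inequality: any family $\{(A_j,B_j)\}_{j=1}^m$ satisfying only (1) and (2) obeys
\[
\sum_{j=1}^m \binom{|A_j|+|B_j|}{|A_j|}^{-1}\le 1 .
\]
Granting this, condition (3) finishes the job: for each $j$ we have $|A_j|+|B_j|\le n$, and since $\binom{a+b}{a}\le\binom{a+b}{\lfloor(a+b)/2\rfloor}$ and the central binomials $\binom{s}{\lfloor s/2\rfloor}$ are non-decreasing in $s$, it follows that $\binom{|A_j|+|B_j|}{|A_j|}\le\binom{n}{n/2}$ for every $j$. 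Hence $m/\binom{n}{n/2}\le\sum_j\binom{|A_j|+|B_j|}{|A_j|}^{-1}\le 1$, i.e. $|\mathcal{F}|\le\binom{n}{n/2}$.

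To prove the set-pair inequality I would run the random-permutation argument already used repeatedly in this paper (e.g.\ in the proofs of Lemma~\ref{supertilt} and Theorem~\ref{spernersupersat}). Pick a uniformly random linear order $\prec$ of the ground set $[N]$, and for each $j$ let $E_j$ be the event that every element of $A_j$ precedes every element of $B_j$ under $\prec$. Since $A_j$ and $B_j$ are disjoint, $E_j$ depends only on the relative order of the $|A_j|+|B_j|$ elements of $A_j\cup B_j$, and exactly $|A_j|!\,|B_j|!$ of their $(|A_j|+|B_j|)!$ orderings realise it, so $\Pr[E_j]=\binom{|A_j|+|B_j|}{|A_j|}^{-1}$. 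The one substantive point --- and the step I would write out most carefully --- is that the events $E_1,\dots,E_m$ are pairwise disjoint. Suppose $E_j$ and $E_k$ both hold for some $j\ne k$; by (2) choose $x\in A_j\cap B_k$ and $y\in A_k\cap B_j$. Then $E_j$ forces $x\prec y$ (as $x\in A_j$, $y\in B_j$) while $E_k$ forces $y\prec x$ (as $y\in A_k$, $x\in B_k$), a contradiction. Disjointness gives $\sum_j\Pr[E_j]\le 1$, which is the claimed inequality.

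It remains to show the bound is attained when $N\ge n$. Here I would take the explicit system $\mathcal{F}=\{(A,[n]\setminus A): A\in\binom{[n]}{\lfloor n/2\rfloor}\}$, which makes sense because $[n]\subseteq[N]$. Condition (1) is clear, (3) holds with equality since $|A|+|[n]\setminus A|=n$, and for distinct $A,A'$ of size $\lfloor n/2\rfloor$ we have $A\cap([n]\setminus A')=A\setminus A'\ne\emptyset$ (as $|A|=|A'|$ rules out $A\subseteq A'$), giving (2). Thus $\mathcal{F}$ is an ISP-system with parameters $(n,N)$ of size $\binom{n}{n/2}$, matching the upper bound. Overall I expect the proof to be short and essentially classical, the disjointness claim being the only genuine obstacle; as a sanity check, specialising the bound to $\mathcal{F}=\{(A,[n]\setminus A):A\in\mathcal{A}\}$ for an antichain $\mathcal{A}$ recovers Sperner's theorem.
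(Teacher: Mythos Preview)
Your proof is correct and is precisely the classical random-permutation argument. Note, however, that the paper does not actually prove Theorem~\ref{griggsstahltrotter}: it is quoted as a known result (the generalisation of Sperner's theorem due to Bollob\'as), and the paper later refers to exactly your argument as ``the standard proof of Theorem~\ref{griggsstahltrotter}'' when discussing the ball of permutations around $(A,B)$ in Section~\ref{sec73}. So there is nothing to compare against---your write-up simply supplies the omitted standard proof.
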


We wish to prove that the number of ISP-systems with parameters $(n,N)$ is $2^{(1+o(1))\binom{n}{n/2}}$, where the $o(1)$ term goes to zero as $n$ tends to infinity. However, if $N$ is too large then we cannot possibly hope for such a theorem, hence we need an upper bound for $N$ for this to be true. 

The first step towards a counting theorem is a supersaturation result - suppose $|\mathcal{F}|=\binom{n}{n/2}+x$, $\mathcal{F}$ satisfies $(1)$ and $(3)$, and prove that there are many pairs $(i,j)$ such that the pairs $(A_i,B_i),(A_j,B_j)$ do not satisfy $(2)$. Following the standard proof of Theorem \ref{griggsstahltrotter}, one might want to define the \emph{ball around }$(A,B)$ as the set of permutations of $[N]$ where all the elements of $A$ come before all the elements of $B$. Then, if we could show that the intersection of any two balls is much smaller than the volume of the ball itself, supersaturation would be immediate (see Lemma \ref{supersatforhammingcodes}). Unfortunately, the intersection of two balls need not be very small - and in fact we were not able to establish supersaturation in this case.

\begin{conj}[\label{falseconjbollobas1}Supersaturation for ISP-systems]
Suppose $\mathcal{F}\subset \mathcal{P}(N)\times \mathcal{P}(N)$ of size $|\mathcal{F}|=\binom{n}{n/2}+x$, satisfying $(1)$ and $(3)$ above. Then there are at least $\left(\lfloor\frac{n}{2}\rfloor+1\right)x$ pairs $(A_i,B_i),(A_j,B_j)\in\mathcal{F}$ not satisfying $(2)$. 
\end{conj}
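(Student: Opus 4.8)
Since an ISP-system in which $B_j=[n]\setminus A_j$ for every $j$ is exactly an antichain, the conjecture contains Theorem~\ref{spernersupersat} as a special case; in particular the constant $\lfloor n/2\rfloor+1$ is best possible and $\binom{n}{n/2}$ is the right threshold. The plan is to prove it by a permutation-averaging argument in the spirit of Lemma~\ref{supersatforhammingcodes}, sharpened by a shadow argument in the spirit of the proof of Theorem~\ref{spernersupersat}.

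First I would reduce to the case $|A_j|+|B_j|=n$ for every $j$. Given $\mathcal{F}$, enlarge each $A_j$ by $n-|A_j|-|B_j|$ brand-new elements, chosen so that the sets of new elements are pairwise disjoint and disjoint from every $A_k$ and $B_k$; write $\mathcal{F}'$ for the result, living in some ground set $[N]$. Then $\mathcal{F}'$ still satisfies $(1)$ and $(3)$, has $|\mathcal{F}'|=|\mathcal{F}|$, and for $i\neq j$ we have $A_i'\cap B_j'=A_i\cap B_j$, so $\{i,j\}$ fails $(2)$ in $\mathcal{F}'$ precisely when it does in $\mathcal{F}$. Hence it suffices to bound the number of bad pairs assuming all $|A_j|+|B_j|=n$.

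For each $j$ let $\mathrm{ball}_j\subseteq S_N$ be the set of permutations of $[N]$ under which every element of $A_j$ precedes every element of $B_j$, so $|\mathrm{ball}_j|=N!\big/\binom{n}{|A_j|}\ge N!\big/\binom{n}{\lfloor n/2\rfloor}$; put $K_\pi:=|\{j:\pi\in\mathrm{ball}_j\}|$. The argument behind Theorem~\ref{griggsstahltrotter} shows that if $\{i,j\}$ satisfies $(2)$ then $\mathrm{ball}_i\cap\mathrm{ball}_j=\emptyset$: picking $a\in A_i\cap B_j$ and $a'\in A_j\cap B_i$, any $\pi\in\mathrm{ball}_i$ puts $a$ before $a'$ while any $\pi\in\mathrm{ball}_j$ puts $a'$ before $a$. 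Equivalently, $\pi\in\mathrm{ball}_i\cap\mathrm{ball}_j$ forces $\{i,j\}$ to be bad. Writing $W:=\max_{\{i,j\}\text{ bad}}|\mathrm{ball}_i\cap\mathrm{ball}_j|$ and using $\binom{k}{2}\ge k-1$ for all $k\ge0$, one obtains
$$\bigl(\text{number of bad pairs}\bigr)\cdot W\ \ge\ \sum_{\pi\in S_N}\binom{K_\pi}{2}\ \ge\ \sum_{\pi\in S_N}\bigl(K_\pi-1\bigr)\ =\ \Bigl(\tfrac{|\mathcal{F}|}{\binom{n}{\lfloor n/2\rfloor}}-1\Bigr)N!\ =\ \frac{x\,N!}{\binom{n}{\lfloor n/2\rfloor}}.$$

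The difficulty — and, I expect, the main obstacle — is that the displayed bound is only $\Omega(x)$: two set-pairs that agree on all but a few coordinates can have $\mathrm{ball}_i\cap\mathrm{ball}_j$ a constant fraction of a single ball, so $W$ can be as large as $\Theta\!\big(N!/\binom{n}{\lfloor n/2\rfloor}\big)$. To recover the factor $\lfloor n/2\rfloor+1$ one would like to replace the crude count by a shadow argument as in Theorem~\ref{spernersupersat}: to each genuine bad pair $\{(A_i,B_i),(A_j,B_j)\}$ with, say, $A_i\cap B_j=\emptyset$, attach a shadow of $\ge\lfloor n/2\rfloor+1$ set-pairs obtained by moving one element between $A_i$ and $B_i$, each still forming a bad pair with $(A_j,B_j)$, and show that a random permutation realises $(A_j,B_j)$ together with a fixed shadow member with probability at most $\binom{n}{\lfloor n/2\rfloor}^{-1}$, so that each bad pair is charged only $1/(\lfloor n/2\rfloor+1)$. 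Unlike Sperner's theorem, however, there is no single combinatorial structure that sees all admissible set-pairs with the right multiplicities: the natural bad-clique structures — the permutation balls above, or the chains of the poset of set-pairs ordered by ``$A$ increasing, $B$ decreasing'' — are too small, in the sense that partitioning all admissible set-pairs into bad-cliques requires far more than $\binom{n}{\lfloor n/2\rfloor}$ parts, so no pigeonhole is available and the extremal bound of Theorem~\ref{griggsstahltrotter} is genuinely fractional. The most promising route seems to be a stability dichotomy: when $\mathcal{F}$ is far from the extremal configuration $\{(A,[n]\setminus A):|A|=\lfloor n/2\rfloor\}$, Bollob\'as's inequality has enough slack that the crude ball bound already spares a factor $\Theta(n)$, whereas when $\mathcal{F}$ is close to it one analyses the shadows of the few exceptional pairs directly against the near-extremal bulk. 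Making ``close'' precise in a way compatible with the fractional proof of Theorem~\ref{griggsstahltrotter}, and gluing the two regimes together, is exactly the step at which our attempts stalled.
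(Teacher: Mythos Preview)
The statement you are attempting to prove is stated in the paper as a \emph{conjecture}, not a theorem: the authors explicitly write that they ``were not able to establish supersaturation in this case'' and leave it open. So there is no proof in the paper to compare your proposal against.

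Your proposal in fact reproduces, in more detail, exactly the approach the paper itself sketches and then abandons. The paper says one ``might want to define the \emph{ball around} $(A,B)$ as the set of permutations of $[N]$ where all the elements of $A$ come before all the elements of $B$'' and then notes that ``the intersection of two balls need not be very small''; this is precisely your $\mathrm{ball}_j$ construction and precisely the obstruction you isolate when you observe that $W$ can be as large as $\Theta\bigl(N!/\binom{n}{\lfloor n/2\rfloor}\bigr)$, so that the displayed inequality yields only $\Omega(x)$ bad pairs rather than $(\lfloor n/2\rfloor+1)x$. Your reduction to $|A_j|+|B_j|=n$ is clean and your averaging computation is correct (the ``$=$'' in your display should be ``$\ge$'', since $|\mathrm{ball}_j|=N!/\binom{n}{|A_j|}$ depends on $|A_j|$, but the inequality goes the right way). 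The shadow/stability dichotomy you outline as a possible rescue is a reasonable line of attack, but as you yourself note it does not close; the paper does not suggest anything further either, and even records that the two natural special cases ($|A_i|=|B_i|=n/2$, and $N=n$) remain open.

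In short: your proposal is not a proof, and you correctly say so; but neither does the paper have one. You have rediscovered the natural strategy and the genuine obstacle the authors describe.
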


There are two notable special cases of the above conjecture. The first is when we add the constraint $|A_i|=|B_i|=n/2$ for all $i$. The second is when we require $n=N$, that is, $A_i,B_i\subseteq [n]$ for all $i$. Quite surprisingly, we were not even able to establish either one of these special cases of the conjecture!

We note that once the above (or a similar) conjecture is established, a counting theorem bounding the number of ISP-systems follows from a straightforward application of the container method.

~

\emph{Note added in 2018: Conjecture~\ref{falseconjbollobas1} and both special cases mentioned above are false.}

Modifying Lov\'{a}sz's proof \cite{lovaszbollobassperner} of Bollob\'{a}s's inequality, Frankl \cite{franklbollobassperner} and later Kalai \cite{kalaibollobassperner} obtained a stronger (`skew'-)version of Theorem \ref{griggsstahltrotter}. To finish off this section we will try to attack this generalisation. This is only to illustrate that supersaturation for such theorems does \emph{not} always hold.

\begin{thm}[Skew-Bollob\'{a}s inequality]\label{skewbollobas}
Let $\mathcal{F}=\{(A_1,B_1),\ldots,(A_m,B_m)\}$ be an ordered family of ordered pairs of sets satisfying the following:
\begin{enumerate}
\item[(i)] $|A_i|\leq a$ and $|B_i|\leq b$ for $1\leq i \leq m$;
\item[(ii)] $A_i\cap B_i=\emptyset$ for $1\leq i \leq m$;
\item[(iii)] $1\leq i < j \leq m $ implies $A_i\cap B_j\neq \emptyset$.
\end{enumerate}
Then $m\leq \binom{a+b}{a}$.
\end{thm}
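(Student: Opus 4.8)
The plan is to prove Theorem~\ref{skewbollobas} by the exterior-algebra method of Lov\'asz, Frankl and Kalai. Unlike the permutation (random-ordering) argument that underlies the symmetric Bollob\'as inequality, this algebraic approach is one-sided in exactly the way condition~(iii) is, so it is the natural tool here.

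First I would reduce to the case $|A_i|=a$ and $|B_i|=b$ for every $i$. If some set is smaller than required, pad it up to the prescribed size using brand-new ground-set elements used nowhere else, with a separate batch of fresh elements for each individual $A_i$ and each $B_i$. This keeps $A_i\cap B_i=\emptyset$, it cannot destroy the relations $A_i\cap B_j\neq\emptyset$ from~(iii), and the ground set $X:=\bigcup_i(A_i\cup B_i)$ stays finite. Next, fix distinct reals $(t_x)_{x\in X}$ and assign to each $x\in X$ the vector $w_x:=(1,t_x,t_x^2,\dots,t_x^{a+b-1})\in\mathbb{R}^{a+b}$; by the Vandermonde determinant, any $a+b$ of the vectors $w_x$ are linearly independent. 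For each $i$ set $\alpha_i:=\bigwedge_{x\in A_i}w_x\in\bigwedge^{a}\mathbb{R}^{a+b}$ and $\beta_i:=\bigwedge_{x\in B_i}w_x\in\bigwedge^{b}\mathbb{R}^{a+b}$, fixing once and for all an ordering of each $A_i$ and each $B_i$ (this only affects signs, which is irrelevant below).

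The key step is the wedge-product computation: under the canonical identification $\bigwedge^{a+b}\mathbb{R}^{a+b}\cong\mathbb{R}$, one has $\alpha_i\wedge\beta_j\neq 0$ if and only if $A_i\cap B_j=\emptyset$. Indeed, when $A_i$ and $B_j$ are disjoint, $A_i\cup B_j$ indexes exactly $a+b$ distinct vectors in general position, hence linearly independent, so their wedge is a nonzero element of the one-dimensional space $\bigwedge^{a+b}\mathbb{R}^{a+b}$; when they meet, some $w_x$ occurs twice and the wedge vanishes. Combined with~(ii) this gives $\alpha_i\wedge\beta_i\neq 0$ for all $i$, and combined with~(iii) it gives $\alpha_i\wedge\beta_j=0$ whenever $i<j$. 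Finally I would conclude that $\alpha_1,\dots,\alpha_m$ are linearly independent in $\bigwedge^{a}\mathbb{R}^{a+b}$: writing $\phi_j:=(\,\cdot\,)\wedge\beta_j$ for the linear functional on $\bigwedge^{a}\mathbb{R}^{a+b}$, a dependence $\sum_i c_i\alpha_i=0$ yields, upon applying $\phi_m,\phi_{m-1},\dots,\phi_1$ in turn, that $c_m=c_{m-1}=\dots=c_1=0$. Hence $m\leq\dim\bigwedge^{a}\mathbb{R}^{a+b}=\binom{a+b}{a}$, as required.

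I expect the only genuine subtlety to be this key step: one must be careful that disjointness of the index sets is \emph{exactly} equivalent to non-vanishing of $\alpha_i\wedge\beta_j$, which uses both that $a+b$ generic vectors in $\mathbb{R}^{a+b}$ are independent and that $\bigwedge^{a+b}\mathbb{R}^{a+b}$ is one-dimensional so the wedge really is a scalar. The padding reduction and the triangularity argument are routine. One should also note that this argument pins down the extremal bound and leaves no room for a supersaturation-type strengthening, which is the point the subsequent discussion makes.
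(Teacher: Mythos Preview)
Your proof is correct and is precisely the exterior-algebra argument of Lov\'asz, Frankl and Kalai that the paper itself cites for this result; note that the paper does not include its own proof of Theorem~\ref{skewbollobas} but merely states it as background, attributing it to \cite{lovaszbollobassperner,franklbollobassperner,kalaibollobassperner}. Your padding reduction, the moment-curve/Vandermonde construction, the wedge-product dichotomy, and the triangular elimination are all standard and sound.
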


We now show a construction that shows that supersaturation does not occur in the case of the skew-Bollob\'{a}s inequality.
\begin{constru}\label{nosupersatconstru}
 \emph{Let $n$ be even and let $a=b=n/2$. Define $$\mathcal{F}_1:=\{(A,B):A\subset \{2,3,\ldots,n\}, |A|=n/2, B=\{2,3,\ldots,n+1\}\backslash A\}$$ and $$\mathcal{F}_2:=\{(A,B): A\subset [n], |A|=n/2, B=[n]\backslash A\}.$$ Let $\mathcal{F}$ be the ordered family starting with all elements of $\mathcal{F}_1$ and then all elements of $\mathcal{F}_2$. Then $|\mathcal{F}|=\binom{n}{n/2}+\binom{n-1}{n/2}=\frac{3}{2}\binom{n}{n/2}$, yet $\mathcal{F}$ only contains $\binom{n-1}{n/2}=\frac{1}{2}\binom{n}{n/2}$ pairs that do not satisfy $(iii)$. Hence attempting to obtain a counting version of Theorem \ref{skewbollobas} using a straightforward application of the container method is not possible: indeed, suppose we aim for a collection of containers of size $(1+\eps)\binom{n}{n/2}$. Suppose for simplicity (and this would make our life }much\emph{ easier!) we are satisfied with counting families that are contained in $\mathcal{F}_1\cup \mathcal{F}_2$.  Then by Construction~\ref{nosupersatconstru} the biggest $\Delta$ we could possibly take is 1! Unfortunately the size of the set $S$ is then roughly $\binom{n}{n/2}/2$ and hence the number of containers will be essentially the same as the trivial bound , i.e. $$|\mathcal{C}|\approx c\binom{\frac32\binom{n}{n/2}}{\binom{n}{n/2}}\approx 2^{1.38\binom{n}{n/2}}\gg 2^{(1+o(1))\binom{n}{n/2}}.$$Hence the straightforward way of applying the method does not yield good results.}
\end{constru}

\medskip
%\noindent\hrulefill

%\vspace{0.1in}

Throughout this paper we have seen several instances of the following theme:

\vspace{.1in}
\noindent\mybox{gray}{
\vspace{.2in}
\centerline{\emph{The presence of supersaturation usually implies strong counting theorems.}}
\vspace{.2in}
}
\vspace{.1in}

It is natural to ask about the converse:

\vspace{.1in}
\noindent\mybox{gray}{
\vspace{.2in}
\centerline{\emph{Does the absence of supersaturation imply in general that we cannot}}
\centerline{\emph{hope for strong counting theorems?}}
\vspace{.2in}
}
\vspace{.1in}

Due to insufficient evidence we do not dare to say that this is true in general. There are, however, many instances in which the answer is affirmative - we direct the reader to the several papers on enumerating $H$-free graphs for a fixed bipartite $H$ \cite{bs2,kw2,morrissaxtoncycle}. The skew-Bollob\'{a}s Theorem (Theorem \ref{skewbollobas}) is another example where the answer to this question is positive. 

\begin{observation}
One might hope that the number of families in $\mathcal{P}(n+1)\times \mathcal{P}(n+1)$ that satisfy (i)--(iii) of Theorem \ref{skewbollobas} (with $a=b=n/2$)  is $2^{(1+o(1))\binom{n}{n/2}}$. Failure of supersaturation is a bad sign - and indeed, the number of such families is at least $2^{1.29\binom{n}{n/2}}$.
\end{observation}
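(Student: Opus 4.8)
The plan is to exhibit an explicit family of $2^{1.29\binom{n}{n/2}}$ distinct families satisfying (i)--(iii) of Theorem~\ref{skewbollobas} with $a=b=n/2$. The idea is to mimic Construction~\ref{nosupersatconstru}: we take the `base' family $\mathcal{F}_1$ of ordered pairs (which, being an ordered family satisfying (i)--(iii), forms a valid skew-Bollob\'as system), and then show that a large proportion of its subfamilies remain valid even after appending a suitable fixed ordered list of further pairs. Concretely, recall $\mathcal{F}_1=\{(A,B):A\subset \{2,3,\ldots,n\}, |A|=n/2, B=\{2,3,\ldots,n+1\}\backslash A\}$, of size $\binom{n-1}{n/2}=\tfrac12\binom{n}{n/2}$, and $\mathcal{F}_2=\{(A,B): A\subset [n], |A|=n/2, B=[n]\backslash A\}$, of size $\binom{n}{n/2}$.

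First I would check that for the ordered family $\mathcal{F}$ obtained by listing all of $\mathcal{F}_1$ first and then all of $\mathcal{F}_2$, conditions (i)--(iii) hold, and that the only violating pairs $(A_i,B_i),(A_j,B_j)$ with $i<j$ are those with the first pair in $\mathcal{F}_1$ and the second in $\mathcal{F}_2$ — this is exactly what Construction~\ref{nosupersatconstru} asserts. Crucially, \emph{within} $\mathcal{F}_1$ and \emph{within} $\mathcal{F}_2$ there are no violations, and a pair from $\mathcal{F}_1$ never forces a violation with a \emph{later} pair from $\mathcal{F}_2$ unless that later pair is the specific `antipodal-type' partner. So for a pair $(A,B)\in\mathcal{F}_1$, there is essentially one pair in $\mathcal{F}_2$ (namely the one whose $A$-set is disjoint from $B$) that it conflicts with. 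The plan is then: keep all of $\mathcal{F}_1$, and from $\mathcal{F}_2$ keep only those pairs $(A',B')$ none of whose conflict-partners in $\mathcal{F}_1$ we need to worry about — but since we are keeping \emph{all} of $\mathcal{F}_1$, we instead delete from $\mathcal{F}_2$ those $(A',B')$ that conflict with some retained pair of $\mathcal{F}_1$. A cleaner route: fix the full ordered list $\mathcal{F}_1$ followed by $\mathcal{F}_2$, and then for each subset $T\subseteq\mathcal{F}_2$ ask when $\mathcal{F}_1\cup T$ (with the induced order) is a valid system. It is valid iff no $(A,B)\in\mathcal{F}_1$ and $(A',B')\in T$ violate (iii), i.e. iff $T$ avoids the fixed set of `bad' elements of $\mathcal{F}_2$. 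Since each element of $\mathcal{F}_1$ kills exactly one element of $\mathcal{F}_2$, the bad elements number at most $\binom{n-1}{n/2}=\tfrac12\binom{n}{n/2}$, so at least $\binom{n}{n/2}-\tfrac12\binom{n}{n/2}=\tfrac12\binom{n}{n/2}$ elements of $\mathcal{F}_2$ are `free', and \emph{every} subset of those free elements may be appended. This already gives at least $2^{\binom{n}{n/2}/2}$ valid families.

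To push the exponent up to $1.29$, I would be more economical: rather than retaining all of $\mathcal{F}_1$, observe that we have freedom to choose a subfamily $\mathcal{F}_1'\subseteq\mathcal{F}_1$ \emph{and} a subfamily $\mathcal{F}_2'\subseteq\mathcal{F}_2$, and the resulting ordered family is valid provided no retained pair of $\mathcal{F}_1'$ conflicts with a retained pair of $\mathcal{F}_2'$. Since the conflict graph between $\mathcal{F}_1$ and $\mathcal{F}_2$ is a (partial) matching — each $\mathcal{F}_1$-pair conflicts with exactly one $\mathcal{F}_2$-pair and vice versa, up to the $\binom{n-1}{n/2}$ edges — the number of valid $(\mathcal{F}_1',\mathcal{F}_2')$ pairs is: for each of the $\binom{n-1}{n/2}=\tfrac12\binom{n}{n/2}$ matched pairs we have $3$ independent-set choices (neither, just the $\mathcal F_1$-side, just the $\mathcal F_2$-side), and for each of the $\binom{n}{n/2}-\binom{n-1}{n/2}=\tfrac12\binom{n}{n/2}$ unmatched $\mathcal{F}_2$-elements we have $2$ free choices. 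This yields at least $3^{\binom{n}{n/2}/2}\cdot 2^{\binom{n}{n/2}/2} = 2^{(\frac12\log_2 3+\frac12)\binom{n}{n/2}} = 2^{1.29248\ldots\binom{n}{n/2}}$ distinct families, which is the claimed bound; one must of course verify these families are genuinely distinct as \emph{unordered} sets of pairs (they are, since they are distinct as subsets of the fixed ground set $\mathcal{F}_1\cup\mathcal{F}_2$).

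The main obstacle, and the step deserving the most care, is the precise combinatorial bookkeeping of \emph{exactly} which ordered pairs from $\mathcal{F}_1$ and $\mathcal{F}_2$ conflict — i.e. confirming that the conflict relation really is a matching (plus isolated vertices) and not something denser, and that no conflicts arise \emph{internally} to $\mathcal{F}_1$ or $\mathcal{F}_2$ or in the `wrong direction' of the ordering. Here one uses that for $(A,B)\in\mathcal{F}_1$ we have $B=\{2,\dots,n+1\}\setminus A$, so $A\cup B=\{2,\dots,n+1\}$ omits the element $1$; while for $(A',B')\in\mathcal{F}_2$ we have $A'\cup B'=[n]$ omitting $n+1$. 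For a violation $A_i\cap B_j=\emptyset$ with $(A_i,B_i)\in\mathcal{F}_1$ before $(A_j,B_j)\in\mathcal{F}_2$ one needs $A_i\subseteq [n]\setminus B_j = A_j$, and since $|A_i|=|A_j|=n/2$ this forces $A_i=A_j$; then $A_i\subseteq\{2,\dots,n\}$ pins down which $\mathcal{F}_2$-pairs are hit, and one checks there is at most one such $j$ for each $i$ (and conversely), giving the matching structure. Verifying the reverse ordering ($\mathcal{F}_2$ before $\mathcal{F}_1$) never produces a violation, and that the pairs within each part are conflict-free, completes the argument. Once this structural claim is nailed down, the counting is the elementary computation above.
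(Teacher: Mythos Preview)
Your proposal is correct and follows essentially the same approach as the paper: partition $\mathcal{F}_2$ into the $\tfrac12\binom{n}{n/2}$ pairs that conflict (via $A_i=A_j$) with a unique element of $\mathcal{F}_1$ and the $\tfrac12\binom{n}{n/2}$ pairs that do not, then count $3^{\binom{n}{n/2}/2}\cdot 2^{\binom{n}{n/2}/2}$ valid ordered subfamilies. The paper phrases this via the explicit decomposition $\mathcal{F}_2=\mathcal{F}_2'\cup\mathcal{F}_2''$ according to whether $1\in A$, which is exactly your ``matched/unmatched'' distinction; your identification of the conflict relation as a perfect matching between $\mathcal{F}_1$ and $\mathcal{F}_2'$ (forced by $A_i\subseteq A_j$ together with $|A_i|=|A_j|$) is the same key observation.
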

\begin{proof}
Consider the families $\mathcal{F}_1,\mathcal{F}_2$ defined in Construction \ref{nosupersatconstru}. Write $\mathcal{F}_2=\mathcal{F}_2'\cup \mathcal{F}_2''$, where $\mathcal{F}_2'=\{(A,B): A\subset \{2,3,\ldots,n\}, |A|=n/2, B=[n]\backslash A\}$ and  $\mathcal{F}_2''=\{(A,B): A\subset [n], 1\in A, |A|=n/2, B=[n]\backslash A\}$. Note that for each $A\subseteq\{2,3,\ldots,n\}$, there is precisely one pair $(B_1,B_2)$ such that $(A,B_1)\in \mathcal{F}_1$ and $(A,B_2)\in\mathcal{F}_2'$. Hence the elements of $\mathcal{F}_1$ and $\mathcal{F}_2'$ can be paired up with each other: two pairs get paired up if their first sets are the same - call these the \emph{bad pairs}.

Consider any ordered  subfamily $\mathcal{G}$ of $\mathcal F$ 
 that consists of any subfamily of $\mathcal{F}_2''$, plus any subfamily of $\mathcal{F}_1\cup \mathcal{F}_2'$ containing at most one element of each bad pair (and where the  ordering of $\mathcal G$ is inherited by the ordering of $\mathcal F$).
Then $\mathcal G$
satisfies (i)--(iii) from Theorem \ref{skewbollobas}. The number of ways to choose the subfamily of $\mathcal{F}_2''$ is $2^{\frac{1}{2}\binom{n}{n/2}}$, and the number of ways to choose an appropriate subfamily of $\mathcal{F}_1\cup \mathcal{F}_2'$ is $3^{\frac{1}{2}\binom{n}{n/2}}$, and the result follows.
\end{proof}
What about the number of families in $\mathcal{P}(n)\times \mathcal{P}(n)$ if we let $a,b\leq n/2$?  Are there any natural constraints to the above theorem under which supersaturation \emph{does} happen? There are lots of open questions in this area to explore.

%%%%%%%%%%
\section{Counting maximal independent sets and antichains in the Boolean lattice}\label{newestsec}
Most of this paper dealt with finding $\alpha(n)$, the number of families in $\P(n)$ satisfying some given property. We applied different variations of the container method to obtain asymptotics for $\log \alpha(n)$. The reader might be curious whether it is possible to obtain precise asymptotics for $\alpha(n)$ using these (or different) methods. In general though this seems to be a much more difficult task. 
For example, in the problem we consider below, it is difficult to even make a firm guess on the asymptotics.  

For a graph $G$, we say an independent set $I$ of $G$ is  \emph{maximal} if for any $v\in V(G)\setminus I$, we have that $I\cup\{v\}$ is not independent. Let $\mis(G)$ denote the number of maximal independent sets in $G$. Most of the problems discussed below have their origins in \cite{duffusfranklrodl}. Let $\B_{n,k}$ be the graph on vertex set $\binom{[n]}{k}\cup\binom{[n]}{k+1}$, and edges given by inclusion. 
Ilinca and Kahn~\cite{IK} proved that  $\log_2 \mis (\B_{n,k})=(1+o(1))\binom{n-1}{k}$. They also made the following sharp conjecture.
\begin{conj}[\label{ilincakahnconj}Ilinca and Kahn~\cite{IK}]
$$\mis (\B _{n,k})=(1+o(1))n2^{\binom{n-1}{k}},$$
where the $o(1)$ term goes to $0$ as $n\rightarrow \infty$.
\end{conj}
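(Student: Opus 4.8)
The statement above is a \emph{conjecture} of Ilinca and Kahn, and (as announced in the abstract and at the start of this section) the goal here is to \emph{disprove} it; so instead of a proof I would look for a construction showing that $\mis(\B_{n,k})$ is \emph{not} asymptotic to $n\,2^{\binom{n-1}{k}}$. The natural first step is the structural reformulation: since $\B_{n,k}$ is bipartite with parts $\binom{[n]}{k}$ and $\binom{[n]}{k+1}$, a maximal independent set $I$ is determined by $U:=I\cap\binom{[n]}{k}$, because the upper part is forced to be $V=\{B\in\binom{[n]}{k+1}: B\text{ contains no member of }U\}$, and maximality is equivalent to the single condition that every $A\in\binom{[n]}{k}\setminus U$ has some $(k+1)$-superset containing no member of $U$. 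So $\mis(\B_{n,k})$ is the number of such ``good'' families $U$. The Ilinca--Kahn lower bound --- and their guessed constant $n$ --- comes from fixing an element $i$ and letting $U$ range over subfamilies of the $i$-avoiding $k$-sets: all but those containing a certain full ``star'' are good, this is $\binom{n-1}{k}$ free binary choices, and the $n$ choices of $i$ give essentially disjoint families. Dually one may fix $i$ and let the upper part $V$ range over the $i$-containing $(k+1)$-sets, again $\binom{n-1}{k}$ choices.

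To contradict the conjecture I would exhibit strictly more maximal independent sets than $(1+o(1))n\,2^{\binom{n-1}{k}}$, in one of two ways. First, combine several ``types'' of element-fixing: the lower-layer family for each $i$ and the (essentially disjoint) upper-layer family for each $i$ already suggest the constant should exceed $n$. More robustly, inside a single fixed-$i$ family one can introduce additional independent toggles by allowing membership of a carefully chosen, ``spread-out'' collection $\mathcal{T}$ of $i$-\emph{containing} $k$-sets --- chosen so that the $(k-1)$-cores of the sets in $\mathcal{T}$ form a near-packing, so that the local repairs of the domination condition forced by different members of $\mathcal{T}$ do not interfere. If $|\mathcal{T}|$ can be taken to grow with $n$ (and $|\mathcal{T}|=\omega(\log n)$ already suffices) this multiplies the count for each $i$ by $2^{|\mathcal{T}|}=\omega(n)$, giving $\mis(\B_{n,k})\ge 2^{\binom{n-1}{k}+\omega(\log n)}\neq(1+o(1))n\,2^{\binom{n-1}{k}}$. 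Because maximal independent sets of $\B_{n,k}$ are exactly the maximal antichains of $\mathcal{P}(n)$ supported on two consecutive levels, the same construction refutes the companion conjecture on antichains; a clean regime to run the argument in first is $k$ bounded, or $k$ linear in $n$.

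The main obstacle is verification rather than the idea. One must check that every constructed family is genuinely good --- that after toggling members of $\mathcal{T}$ the domination condition can be restored \emph{simultaneously} for all affected $k$-sets and for a positive-density set of the remaining free choices --- and that distinct choices give distinct maximal independent sets not already counted among the Ilinca--Kahn families, so the contributions really add. Equally delicate is to quantify how large the gap from $n\,2^{\binom{n-1}{k}}$ can be made while remaining consistent with the known $\log_2\mis(\B_{n,k})=(1+o(1))\binom{n-1}{k}$. Even producing a convincing conjecture for the true order of $\mis(\B_{n,k})$ looks hard, which is precisely the difficulty flagged at the start of the section.
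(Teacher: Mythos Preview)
You correctly recognise that the statement is a conjecture to be \emph{disproved}, and your overall plan---perturb the Ilinca--Kahn matching construction by introducing extra independent toggles, chosen well-spread so that their local repairs do not interfere---is precisely the paper's strategy. Your identification of the obstacle (verifying that distinct perturbations extend to distinct maximal independent sets while still leaving almost all of the $\binom{n-1}{k}$ binary choices free) is also on the mark.

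Where your sketch stays vague, though, the paper supplies a concrete local gadget that differs from yours in an essential way. You propose to toggle membership of individual $i$-containing $k$-sets whose $(k-1)$-cores form a near-packing. The paper instead exploits a \emph{six-cycle} structure: working with the induced matching $M=\{(C,C\cup\{1\}):1\notin C,\ |C|=k\}$, it observes that for any ``good triple'' $(B,r,s)$ with $1,r\notin B$ and $s\in B$, the two matching edges $(B,B^1)$ and $(B^r_s,B^{1,r}_s)$ lie on a unique six-cycle in $\B_{n,k}$ whose two off-matching vertices are $B^r\in\binom{[n]}{k+1}$ and $B^1_s\in\binom{[n]}{k}$. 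The perturbation is to include \emph{both} $B^r$ and $B^1_s$, force the endpoint choice on the $n-1$ matching edges adjacent to them, and leave the remaining $\binom{n-1}{k}-n+1$ matching edges free. Because the six-cycle determines the pair of matching edges uniquely, each resulting maximal independent set arises from at most two good triples---this is exactly the injectivity you flag as the hard step, and here it comes for free. With $\Theta\bigl(n^2\binom{n-1}{k}\bigr)$ good triples this already yields $\mis(\B_{n,k})=\Omega\bigl(n^{3/2}2^{\binom{n-1}{k}}\bigr)$ for $|k-n/2|\le\sqrt n$; then, exactly in the spirit of your ``spread-out $\mathcal T$'' step, the paper performs $N\sim n^{3/2}$ simultaneous six-cycle swaps at pairwise Hamming distance at least $20$ to obtain $\mis(\B_{n,k})\ge 2^{\binom{n-1}{k}+Cn^{3/2}}$, an exponential improvement over the conjectured value.

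So the genuine gap in your proposal is the local gadget: toggling a single lower-layer vertex does not by itself give a perturbation whose maximality and distinctness are transparent, whereas the paired swap along a six-cycle (one extra lower vertex \emph{and} one extra upper vertex together) both preserves independence and makes the distinctness argument immediate. Your first suggestion (combining the $n$ ``lower'' families with $n$ ``dual upper'' families) would at best change the constant in front and is not what the paper does; and your proposed test regimes ($k$ bounded or $k$ linear) are not where the argument is run---the paper works near the middle layer.
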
 
 The natural lower bound in Conjecture~\ref{ilincakahnconj} follows by defining for each $i\in [n]$ an induced matching $M_i$ in $\B_{n,k}$ of size $\binom{n-1}{k}$
where the edges of the matching are of the form $(B,B\cup{i})$ for $B \in \binom{[n]\setminus \{i\}}{k}$. 
Each of the $2^{|M_i|}$ sets containing precisely one vertex from each edge in $M_i$ extends to a maximal independent set, and each extension is different. This produces
$2^{\binom{n-1}{k}}$ distinct maximal independent sets. By considering each $M_i$ for $i\in[n]$ we obtain a list of $n2^{\binom{n-1}{k}}$ maximal independent sets, containing not too many repetitions.

It turns out however, that this construction can be tweaked to obtain significantly more maximal independent sets.

\begin{prop}\label{kahncounterlemma}
If $|k-n/2|\leq\sqrt{n}$ then
$$\mis(\B_{n,k})=\Omega\left(n^{3/2}2^{\binom{n-1}{k}}\right).$$
\end{prop}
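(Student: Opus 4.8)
The plan is to refine the lower-bound construction for $\mis(\B_{n,k})$ by using, instead of the $n$ single-element induced matchings $M_i$, a richer family of induced matchings, and to count carefully the number of genuinely distinct maximal independent sets they generate. First I would recall the base construction: for $i \in [n]$ the matching $M_i = \{(B, B\cup\{i\}) : B \in \binom{[n]\setminus\{i\}}{k}\}$ has size $\binom{n-1}{k}$, is induced in $\B_{n,k}$, and each of its $2^{|M_i|}$ transversals (choosing one endpoint per edge) extends \emph{uniquely} to a maximal independent set of $\B_{n,k}$; the point is that the complement of the vertex set of $M_i$ consists of vertices whose membership in a maximal independent set is then forced. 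The key new idea I would pursue is to enlarge each $M_i$ slightly: pick a second element $j \neq i$ and note that the edges $(B, B\cup\{i\})$ with $j \notin B$ together with a small number of extra edges involving $j$ (for instance edges of the form $(B', B'\cup\{j\})$ where $B'$ ranges over a controlled subfamily not already met by the first part of the matching) still form an induced matching, now of size $\binom{n-1}{k} + \Theta(\text{something})$. Since $|k-n/2|\le \sqrt n$, one has $\binom{n-2}{k-1} = \Theta(\binom{n-1}{k})$ and more importantly the ``extra'' edges can be chosen to number on the order of $\log n \cdot \binom{n-1}{k}/\sqrt n$ or so — the precise bookkeeping is what drives the $n^{3/2}$ versus $n$ improvement.

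More concretely, the cleanest route is probably this: for each ordered pair $(i, S)$ where $i \in [n]$ and $S$ is a small subset of $[n]\setminus\{i\}$ of size roughly $\tfrac12\log_2 n$, build an induced matching $M_{i,S}$ that agrees with $M_i$ on most edges but is modified near the coordinates of $S$ so as to gain $|S|$ extra edges while remaining induced; then $2^{|M_{i,S}|} = 2^{\binom{n-1}{k}} \cdot 2^{|S|} \approx 2^{\binom{n-1}{k}} \cdot \sqrt n$. Each transversal again extends uniquely to a maximal independent set. The number of choices of $(i,S)$ is $n \cdot \binom{n-1}{\sim \frac12 \log_2 n}$, which is superpolynomial; but we do not need all of that — we only need enough distinct matchings, together with the per-matching gain of $\sqrt n$, to beat $n^{3/2} 2^{\binom{n-1}{k}}$. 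So it suffices to exhibit, say, $n^{3/2}$ matchings each of size $\ge \binom{n-1}{k}$, \emph{or} $\sqrt n$ matchings each of size $\ge \binom{n-1}{k} + \log_2 n$, and argue that the resulting maximal independent sets are distinct enough that repetitions cost at most a constant factor. I would set this up so that each maximal independent set ``remembers'' which $M_{i,S}$ produced it (e.g. the structure of $I$ on the modified coordinates pins down $S$, and the global shift pins down $i$), so that the total count is $\Omega(\sqrt n \cdot 2^{\log_2 n} \cdot 2^{\binom{n-1}{k}}) = \Omega(n^{3/2} 2^{\binom{n-1}{k}})$.

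The step I expect to be the main obstacle is verifying that the enlarged matchings $M_{i,S}$ are genuinely \emph{induced} — i.e. no two distinct edges of $M_{i,S}$ have endpoints joined by an edge of $\B_{n,k}$ — while simultaneously guaranteeing that each transversal has a \emph{unique} maximal extension (equivalently, that the vertices outside $V(M_{i,S})$ have their status forced regardless of the transversal chosen). These two requirements pull against each other: making the matching bigger risks both creating chords and leaving ambiguity in the extension. Resolving this will require choosing the extra edges to live on a ``fresh'' part of the cube — for instance restricting the first element of the extra edges to avoid a fixed coordinate — and checking by a short containment/parity argument that the forced vertices really are forced. A secondary, more routine, obstacle is the repetition count: I would bound the number of $(i,S)$ pairs that can yield the same maximal independent set by a polynomial (in fact I expect $O(n)$ or so), which is swamped by the $2^{\binom{n-1}{k}}$ factor, so it only affects lower-order terms. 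Once the induced-matching verification is in hand, the rest is the standard ``product over a matching'' counting already used for the $n 2^{\binom{n-1}{k}}$ bound, and the binomial estimates from Facts~\ref{fact1}--\ref{fact3} together with $|k-n/2|\le\sqrt n$ supply the needed asymptotics.
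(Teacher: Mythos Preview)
Your proposal has a genuine gap at exactly the point you flagged as the main obstacle, and unfortunately it does not look repairable along the lines you suggest. The matching $M_i$ already saturates all lower-level vertices $B$ with $i\notin B$ and all upper-level vertices $A$ with $i\in A$; the uncovered vertices are $\{B\in\binom{[n]}{k}: i\in B\}$ on the bottom and $\{A\in\binom{[n]}{k+1}: i\notin A\}$ on the top, and there are \emph{no} edges of $\B_{n,k}$ between these two sets (if $i\in B\subset A$ then $i\in A$). So you cannot append extra edges to $M_i$ at all; any ``modification near $S$'' must delete at least as many edges as it adds, and your scheme of gaining $|S|$ edges to pick up an extra $2^{|S|}\approx\sqrt n$ factor collapses. (Incidentally, the claim that transversals of $M_i$ extend \emph{uniquely} to maximal independent sets is also false --- what is true, and what the paper uses, is only that different transversals extend to different maximal independent sets, since $I\cap V(M_i)$ recovers the transversal.)

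The paper's route is quite different and does not attempt to enlarge the matching. It fixes the single matching $M=M_1$ and counts maximal independent sets that meet $M$ in exactly $|M|-2$ edges rather than all of them. Concretely, for each ``good triple'' $(B,r,s)$ with $1,r\notin B$, $s\in B$, the four vertices $B,\,B^1,\,B^r_s,\,B^{1,r}_s$ (two missed edges of $M$) together with $B^r$ and $B^1_s$ form a $6$-cycle; one puts $B^r$ and $B^1_s$ into the independent set, which forces the choices on the $n-1$ edges of $M$ meeting $D(\T)\cup U(\T)$, and leaves $\binom{n-1}{k}-n+1$ free edges. Each resulting independent set determines its $6$-cycle up to a twofold symmetry, so distinctness is automatic. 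The count is then
\[
\tfrac12\cdot\#\{\text{good triples}\}\cdot 2^{\binom{n-1}{k}-n+1}
\;=\;\Omega\!\left(\binom{n-1}{k}\,k(n-k)\right)\cdot 2^{\binom{n-1}{k}-n}
\;=\;\Omega\!\left(n^{3/2}2^{\binom{n-1}{k}}\right),
\]
using $|k-n/2|\le\sqrt n$ only to get $k(n-k)=\Theta(n^2)$ and $\binom{n-1}{k}=\Theta(2^n/\sqrt n)$. The gain comes not from a bigger matching but from the $\Theta(n^2\binom{n-1}{k})$ choices of $6$-cycle, each costing a factor $2^{n-1}$ in free edges.
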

\noindent\emph{Proof strategy:}
We say a triple $\T=(B,r,s)$ is \emph{good} if  $r,s \in [n]$,~ $B \in \binom{[n]}{k}$,~ $1,r\notin B$,~ $r\neq 1$ and $s\in B$.
For each good triple $\T$ we will construct a collection $f(\T)$ of independent sets in $\B_{n,k}$ with $$|f(\T)|=2^{\binom{n-1}{k}-n+1}.$$
For each good triple $\T$ we will extend all elements of $f(\T)$ to a maximal independent set in an arbitrary way. We will show that every maximal independent set in $\B_{n,k}$ is obtained at most twice in this way. The number of good triples is $\Omega\left(\binom{n-1}{k}n^2\right)$, hence a simple calculation will complete the proof.
\begin{proof}
For a set $C$ with $i\notin C$ and $j\in C$, let $C^i:=C\cup \{i\}$ and $C_j:=C\setminus \{j\}$. We define, for example, $C^{i,k}_j$ analogously (assuming $i,k\notin C$ and $j\in C$). Let $M$ be the induced matching in $\B_{n,k}$ of size $\binom{n-1}{k-1}$ given by $M:=\{(C,C^1): 1\notin C, ~ |C|=k\}$. %(Observe that for each edge in $M$ we could choose either endpoint, which would give us $2^{\binom{n-1}{k}}$ independent sets that all extend to different maximal independent sets since they differ on the vertex set of $M$.) 

Given a good triple $\T=(B,r,s)$, let $U(\T):=\{C: |C|=k+1, B^1_s\subset C\}$ and $D(\T):=\{C: |C|=k, C\subset B^r\}$. Let $e:=(B,B^1)$ and $f:=(B^r_s,B^{1,r}_s)$; notice these are two edges of $M$. Note that $|U(\T)|=n-k$ and $|D(\T)|=k+1$. Every vertex of $D(\T)\cup U(\T)$ is incident to precisely one edge of $M$. Moreover the only two edges that are simultaneously incident to one vertex in $D(\T)$ and one vertex in $U(\T)$ are $e$ and $f$. The collection of independent sets $f(\T)$ is defined as follows.
\begin{itemize}
\item Let $B^r$ and $B^1_s$ be elements of the independent set.
\item\label{seconditem} If an edge of $M$ is not incident to any vertex in $D(\T)\cup U(\T)$ then put exactly one endpoint of that edge into the independent set.
\item\label{thirditem} If an edge of $M$ is incident to precisely one vertex of $D(\T)\cup U(\T)$, choose the other vertex of this edge.
\end{itemize}
Note that this gives us $\binom{n-1}{k-1}-(n-k)-k+1$ free choices, hence $|f(\T)|=2^{\binom{n-1}{k}-n+1}$ as claimed. Every such set constructed is independent. Indeed, this follows since $M$ is an induced matching, there is no edge between $B^r$ and $B^1_s$, and there is no edge between $B^r$ or $B^1_s$ and another vertex in the set. Let $\F$ be the union of the $f(\T)$s over all good triples.

Every such constructed independent set contains precisely one vertex from each edge in $M$ except for precisely two edges ($e,f$ from above). These two edges lie in a unique $6$-cycle in $\B_{n,k}$ ($e,f$ together with $B^1_s$ and $B^r$), hence given any $I\in \F$, there are precisely two good triples giving rise to this $I$. 
Specifically, if $I\in \mathcal F$ arises from a good triple $\T=(B,r,s)$, the only other good triple that `produces' $I$ is $\T '=(B^r_s,s,r)$.

Moreover, if $I,I' \in \F$ where $I \not =I'$ then $I$ and $I'$ lie in different maximal independent sets in $\B _{n,k}$.
Hence a maximal independent set of $\B_{n,k}$ is counted twice by $|\F|$ if it intersects $M$ in $|M|-2$ vertices, and not counted otherwise.

The number of good triples is $$\binom{n-1}{k}k(n-k-1)=\Omega\left(\frac{2^n}{\sqrt{n}}n^2\right) = \Omega\left(n^{3/2}2^n\right),$$
and the result follows.

\end{proof}

In the above argument we started with a good triple and modified the independent sets from the Ilinca--Kahn construction along a $6$-cycle determined by the triple. But we can get a better lower bound by starting out with a collection $S$ of $N>1$ good triples, as long as the sets in the triples are sufficiently far apart (Hamming distance at least $20$, say) so that the modifications do not interfere with each other. Each maximal independent set is then counted at most $2^{N}$ times, and as long as $N$ is not too large the number of choices for the $N$ triples is at least $$\left(\frac{n}{10}\right)^{2N}\binom{\binom{n-1}{k-1}}{N}\geq \left(\frac{n}{10}\right)^{2N}\left(\frac{2^n}{C_1N\sqrt{n}}\right)^{N} \geq 2^{nN} \left(n^{3/2} N^{-1} C_2 \right)^N, $$ for some absolute constants $C_1,C_2>0$. Each good triple decreases the number of free choices on edges of $M$ by at most $n$, hence costing us a factor of $2^n$. So by setting $N=n^{3/2}C_2/2$ we conclude the following result, which is an exponential improvement over Proposition~\ref{kahncounterlemma}:
\begin{prop}\label{kahncounterlemma2}
There exists an absolute constant $C>0$ such that whenever $k,n$ are such that $|k-n/2|\leq \sqrt{n}$ then $$\mis(\B_{n,k})\geq 2^{\binom{n-1}{k-1}+Cn^{3/2}}.$$\qed
\end{prop}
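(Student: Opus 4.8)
The plan is to iterate the $6$-cycle modification of the Ilinca--Kahn construction not just once, but simultaneously along $N$ disjoint $6$-cycles, where $N$ is chosen to be of order $n^{3/2}$. Concretely, I would fix the induced matching $M=\{(C,C^1):1\notin C,~|C|=k\}$ of size $\binom{n-1}{k-1}$ as in the proof of Proposition~\ref{kahncounterlemma}, and consider collections $S=\{\T_1,\dots,\T_N\}$ of good triples $\T_i=(B_i,r_i,s_i)$ whose underlying sets $B_i$ are pairwise \emph{far apart}, say pairwise at Hamming distance at least $20$. For each such $S$, we perform the local modification from the single-triple proof at each of the $N$ $6$-cycles determined by the $\T_i$: on the two edges $e_i,f_i$ of $M$ attached to the $6$-cycle of $\T_i$ we put $B_i^{r_i}$ and $(B_i)^1_{s_i}$ into the independent set, on the edges of $M$ meeting exactly one vertex of $D(\T_i)\cup U(\T_i)$ we take the other endpoint, and on all remaining edges of $M$ we make a free binary choice. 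Extend each resulting independent set to a maximal one arbitrarily.

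The key steps, in order, are: (1) \textbf{Separation implies non-interference.} Since the sets $B_i$ are pairwise at Hamming distance at least $20$, the vertex sets $D(\T_i)\cup U(\T_i)$ (which live within Hamming distance $\le 3$ of $B_i$) are pairwise disjoint and no edge of $M$ is simultaneously incident to two different ``obstruction'' regions; hence the prescriptions for the different $\T_i$ never conflict, and exactly as in the single-triple case every set we build is independent in $\B_{n,k}$. (2) \textbf{Counting the free choices.} Each good triple removes at most $n$ edges of $M$ from free play, so the number of free binary choices is at least $\binom{n-1}{k-1}-Nn$, giving $2^{\binom{n-1}{k-1}-Nn}$ independent sets per collection $S$. (3) \textbf{Counting the collections $S$.} The number of good triples is $\Omega(n^{3/2}2^n)$; imposing pairwise separation only removes a polynomially bounded neighbourhood around each chosen set, so one can greedily pick $N$ separated triples in at least $\big((n/10)^2 \cdot 2^n/(C_1 N\sqrt n)\big)^N \ge 2^{nN}(C_2 n^{3/2}/N)^N$ ways (the first two named-constant bounds already appear in the paragraph preceding the statement). (4) \textbf{Bounding multiplicity.} Each of the $N$ $6$-cycles is ``flipped'' independently, and a maximal independent set obtained this way determines, for each flipped $6$-cycle, an unordered pair of candidate triples (the triple $\T$ and its partner $(B^r_s,s,r)$, as in the single-cycle analysis) and the locations of the $N$ flipped cycles; hence each maximal independent set is produced at most $2^{N}\cdot(\text{something subexponential})$ times — in fact at most $2^{N}$ times after accounting for the $\le 2$ triples per cycle, since the positions of the $2N$ anomalous edges of $M$ are read off from the set itself. (5) \textbf{Optimisation.} Combining (2)--(4), $\mis(\B_{n,k})\ge 2^{\binom{n-1}{k-1}-Nn}\cdot 2^{nN}(C_2 n^{3/2}/N)^N / 2^{N} = 2^{\binom{n-1}{k-1}}(C_2 n^{3/2}/(2N))^N$; choosing $N=n^{3/2}C_2/2$ (which is legitimately of order $n^{3/2}$ and far below the $\Omega(2^n)$ ceiling) makes the base of the last factor a constant $\ge 1$, so we obtain $\mis(\B_{n,k})\ge 2^{\binom{n-1}{k-1}+Cn^{3/2}}$ for an absolute $C>0$.

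I would present steps (3) and (5) essentially as the displayed chain of inequalities already written in the paragraph preceding the statement (using the constants $C_1,C_2$ there), so the only genuinely new work is making steps (1) and (4) rigorous. The main obstacle is step~(4): one must argue carefully that the local flips are recoverable from the final maximal independent set, i.e.\ that knowing a maximal independent set $I$, one can identify (up to the unavoidable factor $2^N$ coming from the two-triples-per-$6$-cycle ambiguity) which collection $S$ produced it. The cleanest way is to observe, exactly as in Proposition~\ref{kahncounterlemma}, that a set produced by flipping the cycles of $S$ meets $M$ in exactly $|M|-2N$ vertices, with the $2N$ ``missing'' edges forming $N$ disjoint $6$-cycles with the corresponding $B_i^{r_i},(B_i)^1_{s_i}$; these $6$-cycles, and hence the triples up to the partner ambiguity, are determined by $I$. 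The separation hypothesis in step~(1) is what guarantees these $N$ anomalous $6$-cycles are genuinely disjoint and individually look exactly like the single-triple picture, so no new ambiguity is introduced beyond the per-cycle factor of $2$.
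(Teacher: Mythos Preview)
Your proposal is correct and follows exactly the same route as the paper: the paper's proof \emph{is} the paragraph preceding the statement (note the \qed), and you have faithfully expanded that sketch, supplying the two missing details (non-interference under the Hamming-distance-$20$ separation, and recoverability of the $N$ flipped $6$-cycles from the $2N$ edges of $M$ carrying no vertex of the maximal independent set). One small arithmetic slip, shared with the paper's sketch: taking $N=C_2 n^{3/2}/2$ makes the base $C_2 n^{3/2}/(2N)$ equal to $1$, which yields no gain; you should instead take, say, $N=C_2 n^{3/2}/4$ (or $N=C_2 n^{3/2}/(2e)$), so that the base is a constant strictly greater than $1$ and the final factor contributes $2^{\Omega(n^{3/2})}$ as required.
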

We do not have any reason to believe that Proposition~\ref{kahncounterlemma2} gives the correct order of magnitude of $\mis(\B_{n,k})$; It would be extremely interesting to determine this. However, we suspect this may   be very difficult.

Finally we note that the above result also disproves another conjecture from \cite{IK}.  Write $\text{ma}(\P(n))$ for the number of maximal antichains in $\P(n)$. Ilinca and Kahn \cite{IK} conjectured that $\text{ma}(\P(n))=\Theta \left(n2^{\binom{n-1}{\lfloor n/2 \rfloor}}\right)$. However, since $\text{ma}(\P(n))\geq \mis(\B_{n,k})$ for all $k$, Proposition~\ref{kahncounterlemma2} disproves this conjecture.

\section*{Acknowledgements}
The authors are grateful to the BRIDGE strategic alliance between the University of Birmingham and the University of Illinois at Urbana-Champaign. This research was conducted as part of the `Building Bridges in Mathematics' BRIDGE Seed Fund project. The authors also thank the referees for their careful reviews.

\end{document}